\DeclareMathOperator{\h}{\mbox{\sf{h}}}
\DeclareMathOperator{\crr}{\mbox{\sf{cr}}}
\DeclareMathOperator{\nr}{\mbox{\sf{nr}}}
\DeclareMathOperator{\bl}{\mbox{\sf{bl}}}
\DeclareMathOperator{\mcr}{\mbox{\sf{mcr}}}
\DeclareMathOperator{\smc}{\mbox{\sf{smc}}}
\DeclareMathOperator{\magg}{\mbox{\sf{mag}}}
\DeclareMathOperator{\wid}{\mbox{\sf{w}}}
\DeclareMathOperator{\minn}{\mbox{\sf{min}}}
\DeclareMathOperator{\ssd}{\mbox{\sf{ssd}}}
\DeclareMathOperator{\asc}{\mbox{\sf{asc}}}
\DeclareMathOperator{\idr}{\mbox{\sf{idr}}}
\DeclareMathOperator{\lmax}{\mbox{\sf{lmax}}}
\DeclareMathOperator{\rmin}{\mbox{\sf{rmin}}}
\DeclareMathOperator{\zero}{\mbox{\sf{zero}}}
\DeclareMathOperator{\fcr}{\mbox{\sf{fcr}}}
\newcommand{\etal}{et~al.}
\newtheorem{thm}{Theorem}[section]
\newtheorem{lem}[thm]{Lemma}
\newtheorem{coro}[thm]{Corollary}
\newtheorem{conj}[thm]{Conjecture}
\newtheorem{remark}[thm]{Remark}
\newcommand{\A}{\mathcal{A}}      
\newcommand{\PP}{{\mathcal P}}
\newcommand{\Asc}[1]{\A_{#1}}
\newcommand{\D}{\mathcal{D}}
\newcommand{\F}{\mathcal{F}}
\newcommand{\M}{\mathcal{M}}
\newcommand{\V}{\mathcal{V}}
\newcommand{\red}{\mbox{\sf{redarc}}}
\newcommand{\T}{\mathcal{T}}
\begin{document}
	
	\begin{center}
		{\large \bf Catalan structures arising from pattern-avoiding Stoimenow matchings and other Fishburn objects}
	\end{center}
	
	\begingroup
	\renewcommand{\thefootnote}{\fnsymbol{footnote}}
	\begin{center}
		Shuzhen Lv$^{a}$, Sergey Kitaev$^{b}$ and Philip B. Zhang$^{c}$
		\\[6pt]
		
		$^{a,c}$College of Mathematical Sciences \& Institute of Mathematics and Interdisciplinary Sciences, Tianjin Normal University, \\ Tianjin  300387, P. R. China\\[6pt]
		
		$^{b}$Department of Mathematics and Statistics, University of Strathclyde, \\ 26 Richmond Street, Glasgow G1 1XH, UK\\[6pt]
		
		Email:  $^{a}${\tt  lvshuzhenlsz@yeah.net},
		$^{b}${\tt sergey.kitaev@strath.ac.uk},
		$^{c}${\tt zhang@tjnu.edu.cn}
	\end{center}

	\endgroup

	\noindent\textbf{Abstract.}
	In connection with Vassiliev's knot invariants, Stoimenow introduced in 1998 a class of matchings, also known as regular linearized chord diagrams. These matchings are linked to various combinatorial structures, all of which are associated with the Fishburn numbers. In this paper, we address a problem posed by Bevan et al.\ in 2025 concerning the identification of subsets of Stoimenow matchings that are counted by the Catalan numbers. We present five solutions in terms of pattern-avoiding matchings. We also consider four infinite families of patterns that generalize four of the five forbidden patterns appearing in the solution to the problem we solved and prove that the matchings avoiding them are equinumerous. Finally, we establish numerous results on distributions and joint equidistribution of statistics over these Catalan-counted subsets of Fishburn structures, namely Stoimenow matchings, $(2+2)$-free posets, ascent sequences, and Fishburn permutations, notably expressing some of them in terms of Narayana numbers and others in terms of ballot numbers. \\[-3mm]
	
	\noindent\textbf{\bf Keywords:} Stoimenow matching; Fishburn structure; Catalan number; Narayana number; ballot number\\[-3mm]
	
	\noindent {\bf AMS Subject Classifications:} 05A05, 05A15\\
	
	\section{Introduction}
	To give upper bounds on the dimension of the space of Vassiliev's knot
	invariants of a given degree, Stoimenow~\cite{Stoimenow} introduced what
	he called \textit{regular linearized chord diagrams}, which are now commonly referred to as \textit{Stoimenow matchings}. 
	
	\subsection{Stoimenow matchings} 
	A \textit{matching} on the set of integers \(\{1, 2, \ldots, 2n\}\) is a partition of the set into blocks of size 2, called \textit{arcs}. 
	For an arc \([a, b]\) where \(a < b\), we call  \(a\) (resp., \(b\))  the \textit{opener} (resp., \textit{closer}) of the arc. 
	Matchings can be represented by arranging openers and closers in increasing order from left to right and connecting each pair with a curve. 
	A matching is called a \textit{Stoimenow matching} if there
	are no occurrences of Type I or Type II arcs, which are defined as follows: \\[-3mm]
	
	\begin{minipage}[b]{0.45\textwidth}
		\centering
		\begin{tikzpicture}[scale = 0.4]
			\draw (-0.4,0) -- (1.4,0);
			\draw [dashed](1.4,0) -- (4.6,0);
			\foreach \x in {0,1,3,4}
			{
				\filldraw (\x,0) circle (2pt);
			}
			\draw[black] (0,0) arc (180:0:2 and 1);
			\draw[black] (1,0) arc (180:0:1 and 0.5);
			\node[anchor=north west] at (-0.6, -0.1) {\scriptsize{$i$}};
			\node[anchor=north west] at (0.3, -0.1) {\scriptsize{$i+1$}};
			\node[anchor=north west] at (2.4, -0.1) {\scriptsize{$k$}};
			\node[anchor=north west] at (3.6, -0.1) {\scriptsize{$\ell$}};
			\node[anchor=north west] at (-2.4, 1.5) {{\scriptsize Type I}};
		\end{tikzpicture}
		\vspace{0.2cm} 
		
	\end{minipage}
	\hfill 
	\begin{minipage}[b]{0.45\textwidth}
		\centering
		\begin{tikzpicture}[scale = 0.4]
			\draw[dashed] (-0.4,0) -- (2.6,0);
			\draw (2.6,0) -- (4.6,0);
			\foreach \x in {0,1,3,4}
			{
				\filldraw (\x,0) circle (2pt);
			}
			\draw[black] (0,0) arc (180:0:2 and 1);
			\draw[black] (1,0) arc (180:0:1 and 0.5);
			\node[anchor=north west] at (-0.7, -0.1) {\scriptsize{$k$}};
			\node[anchor=north west] at (0.6, -0.1) {\scriptsize{$\ell$}};
			\node[anchor=north west] at (2.6, -0.1) {\scriptsize{$j$}};
			\node[anchor=north west] at (3.3, -0.1) {\scriptsize{$j+1$}};
			\node[anchor=north west] at (-2.4, 1.6) {{\scriptsize Type II}};
		\end{tikzpicture}
		\vspace{0.2cm}
	\end{minipage}
	
	\vspace{-2mm}
	\noindent
	The \textit{length} of a matching is the number of arcs it contains. We denote by $\M_n$ the set of all Stoimenow matchings of length $n$, with $|\M_0|=1$ corresponding to the \textit{empty matching}. As an example, there are five matchings in $\M_3$: 
	
	\vspace{-0.3cm}
	
	\begin{center}
		\begin{tikzpicture}[scale = 0.4]
			\draw (-0.4,0) -- (5.4,0);
			\foreach \x in {0,1,2,3,4,5}
			{
				\filldraw (\x,0) circle (2pt);
			}
			\draw[black] (0,0) arc (180:0:0.5 and 0.8);
			\draw[black] (2,0) arc (180:0:0.5 and 0.8);
			\draw[black] (4,0) arc (180:0:0.5 and 0.8);
			\node[anchor=north west] at (-0.4, -0.1) {\scriptsize{$1$}};
			\node[anchor=north west] at (0.6, -0.1) {\scriptsize{$2$}};
			\node[anchor=north west] at (1.6, -0.1) {\scriptsize{$3$}};
			\node[anchor=north west] at (2.6, -0.1) {\scriptsize{$4$}};
			\node[anchor=north west] at (3.6, -0.1) {\scriptsize{$5$}};
			\node[anchor=north west] at (4.6, -0.1) {\scriptsize{$6$}};
		\end{tikzpicture}
		\hspace{0.3cm}
		\begin{tikzpicture}[scale = 0.4]
			\draw (-0.4,0) -- (5.4,0);
			\foreach \x in {0,1,2,3,4,5}
			{
				\filldraw (\x,0) circle (2pt);
			}
			\draw[black] (0,0) arc (180:0:0.5 and 0.8);
			\draw[black] (2,0) arc (180:0:1 and 0.8);
			\draw[black] (3,0) arc (180:0:1 and 0.8);
			\node[anchor=north west] at (-0.4, -0.1) {\scriptsize{$1$}};
			\node[anchor=north west] at (0.6, -0.1) {\scriptsize{$2$}};
			\node[anchor=north west] at (1.6, -0.1) {\scriptsize{$3$}};
			\node[anchor=north west] at (2.6, -0.1) {\scriptsize{$4$}};
			\node[anchor=north west] at (3.6, -0.1) {\scriptsize{$5$}};
			\node[anchor=north west] at (4.6, -0.1) {\scriptsize{$6$}};
		\end{tikzpicture}
		\hspace{0.3cm}
		\begin{tikzpicture}[scale = 0.4]
			\draw (-0.4,0) -- (5.4,0);
			\foreach \x in {0,1,2,3,4,5}
			{
				\filldraw (\x,0) circle (2pt);
			}
			\draw[black] (0,0) arc (180:0:1 and 0.8);
			\draw[black] (1,0) arc (180:0:1 and 0.8);
			\draw[black] (4,0) arc (180:0:0.5 and 0.8);
			\node[anchor=north west] at (-0.4, -0.1) {\scriptsize{$1$}};
			\node[anchor=north west] at (0.6, -0.1) {\scriptsize{$2$}};
			\node[anchor=north west] at (1.6, -0.1) {\scriptsize{$3$}};
			\node[anchor=north west] at (2.6, -0.1) {\scriptsize{$4$}};
			\node[anchor=north west] at (3.6, -0.1) {\scriptsize{$5$}};
			\node[anchor=north west] at (4.6, -0.1) {\scriptsize{$6$}};
		\end{tikzpicture}
		\hspace{0.3cm}
		\begin{tikzpicture}[scale = 0.4]
			\draw (-0.4,0) -- (5.4,0);
			\foreach \x in {0,1,2,3,4,5}
			{
				\filldraw (\x,0) circle (2pt);
			}
			\draw[black] (0,0) arc (180:0:1 and 0.8);
			\draw[black] (1,0) arc (180:0:1.5 and 0.8);
			\draw[black] (3,0) arc (180:0:1 and 0.8);
			\node[anchor=north west] at (-0.4, -0.1) {\scriptsize{$1$}};
			\node[anchor=north west] at (0.6, -0.1) {\scriptsize{$2$}};
			\node[anchor=north west] at (1.6, -0.1) {\scriptsize{$3$}};
			\node[anchor=north west] at (2.6, -0.1) {\scriptsize{$4$}};
			\node[anchor=north west] at (3.6, -0.1) {\scriptsize{$5$}};
			\node[anchor=north west] at (4.6, -0.1) {\scriptsize{$6$}};
		\end{tikzpicture}
		\hspace{0.3cm}
		\begin{tikzpicture}[scale = 0.4]
			\draw (-0.4,0) -- (5.4,0);
			\foreach \x in {0,1,2,3,4,5}
			{
				\filldraw (\x,0) circle (2pt);
			}
			\draw[black] (0,0) arc (180:0:1.5 and 0.8);
			\draw[black] (1,0) arc (180:0:1.5 and 0.8);
			\draw[black] (2,0) arc (180:0:1.5 and 0.8);
			\node[anchor=north west] at (-0.4, -0.1) {\scriptsize{$1$}};
			\node[anchor=north west] at (0.6, -0.1) {\scriptsize{$2$}};
			\node[anchor=north west] at (1.6, -0.1) {\scriptsize{$3$}};
			\node[anchor=north west] at (2.6, -0.1) {\scriptsize{$4$}};
			\node[anchor=north west] at (3.6, -0.1) {\scriptsize{$5$}};
			\node[anchor=north west] at (4.6, -0.1) {\scriptsize{$6$}};
		\end{tikzpicture}
		\vspace{0.2cm} 
	\end{center}
	\vspace{-0.6cm}
	
	It is known that Stoimenow matchings are in one-to-one correspondence with several combinatorial objects that are enumerated  by the \textit{Fishburn numbers} (sequence A022493 in OEIS~\cite{OEIS}). These include \textit{interval orders}, \textbf{(2+2)}-\textit{free posets}, \textit{ascent sequences}, \textit{Fishburn permutations}, and \textit{Fishburn matrices}~\cite{Bousquet-Claesson-Dukes-Kitaev, Dukes-Parviainen}. This counting sequence has the following generating function (g.f.):
	\[
	\sum_{n\geq 0} \prod_{k=1}^{n} (1-(1-t)^k)=1+t+2t^2+5t^3+15t^4+53t^5+217t^6+\cdots.
	\]
	Claesson and Linusson \cite{Claesson-Linusson} studied matchings of length $n$ that avoid Type I arcs, referred to as matchings without \textit{left} (\textit{neighbor}) \textit{nestings}, which are enumerated by $n!$. They also constructed bijections between such matchings and \textit{permutations}, \textit{inversion tables}, and \textit{factorial posets}.
	
	Throughout this paper, we assume that a matching $M\in\M_n$ consists of the arcs $$[a_1,b_1],  [a_2,b_2],\ldots,[a_n,b_n],$$ where $1=a_1<a_2<\cdots<a_n$ and $b_n=2n$. 
	A subset of arcs  $[a_{i_1},b_{i_1}]$, $[a_{i_2},b_{i_2}],\ldots,[a_{i_k},b_{i_k}]$ in~$M$ where $k\geq 1$ is called a
	\begin{itemize}
		\item $k$-\textit{chain} if $a_{i_1}<a_{i_2}<b_{i_1}<a_{i_3}<b_{i_2}<a_{i_4}<b_{i_3}<\cdots<a_{i_k}<b_{i_{k-1}}<b_{i_k}$,
		\item $k$-\textit{crossing} if $a_{i_1}<a_{i_2}<\cdots<a_{i_k}<b_{i_1}<b_{i_2}<\cdots <b_{i_k}$, and
		\item $k$-\textit{noncrossing} if $a_{i_1}<b_{i_1}<a_{i_2}<b_{i_2}<\cdots<a_{i_k}<b_{i_k}$.
	\end{itemize}
	For example, in the matching $M$ shown in Figure~\ref{fig:matching_example}, there are two 3-chains, namely $\{[1,4],[3,7],[6,8]\}$ and $\{[2,5],[3,7],[6,8]\}$; one 3-crossing, namely $\{[1,4],[2,5],[3,7]\}$; and two 2-noncrossings, namely $\{[1,4],[6,8]\}$ and $\{[2,5],[6,8]\}$.
	
	\begin{figure}[ht]
		\centering
		\begin{tikzpicture}[scale = 0.35] 
			
			\draw (-0.5,0) -- (7.5,0);
			
			\foreach \x [count=\xi] in {0,1,2,3,4,5,6,7} {
				\filldraw (\x,0) circle (2pt) node[below=3pt] {\scriptsize \xi};
			}
			\draw[black] (0,0) arc (180:0:1.5 and 1); 
			\draw[black] (1,0) arc (180:0:1.5 and 1); 
			\draw[black] (2,0) arc (180:0:2 and 1);   
			\draw[black] (5,0) arc (180:0:1 and 1); 
			\node at (-0.8, 1) {\scriptsize $M$};
		\end{tikzpicture}
		\caption{An example of a matching $M$ with chains, crossings, and noncrossings.}
		\label{fig:matching_example}
	\end{figure}
	
	\subsection{Results in this paper}
	Bevan et al.~\cite{Bevan-Cheon-Kitaev} presented a hierarchy related to the Fishburn numbers (see \cite[Fig.~12]{Bevan-Cheon-Kitaev}), and in relation to this hierarchy, they stated the problem of describing a subset of Stoimenow matchings that are counted by the Catalan numbers. 
	Recall that the {\it $n$-th Catalan number} $C_n=\frac{1}{n+1}{2n\choose n}$ satisfies the recurrence relation
	\begin{equation}\label{Catalan-rec}
		C_n= \displaystyle\sum_{k=1}^{n} C_{k-1}\,C_{n-k}
	\end{equation}
	with $C_0=1$. This sequence is recorded as A000108 in~\cite{OEIS}, and its g.f.\ is given by 
	\(
	C(t)=\frac{1-\sqrt{1-4t}}{2t}=1+t+2t^2+5t^3+14t^4+42t^5+132t^6+\cdots.
	\)
	The Catalan numbers have already appeared in the literature in the context of matchings, specifically as so-called \emph{nonnesting matchings}~\cite{Stanley}, defined as matchings in which no two arcs ``nest'' over each other.
	Since Type~I and Type II arcs are specific cases of nesting arcs, we automatically obtain a solution to the problem posed by Bevan et al. \cite{Bevan-Cheon-Kitaev}, as the set of nonnesting matchings of a given length is a subset of Stoimenow matchings of the same length. However, in Theorem~\ref{thm-Catalan}, we obtain additional solutions to the problem by introducing forbidden patterns of length 4 (i.e., configurations of four arcs), with the ``nonnesting solution'' corresponding to the avoidance of a particular pattern.
	
	We say that a matching in \(\mathcal{M}_n\) contains a pattern if the resulting submatching, obtained by removing some arcs, is isomorphic to that pattern; otherwise the matching is said to \textit{avoid} the pattern.
	In this paper we introduce the following five forbidden configurations (patterns):\\[-3mm]
	\[
	\begin{tikzpicture}[scale = 0.4] 
		\node at (-0.5,1.5) {\upshape {\footnotesize{$P_1$}}};
		\draw[dashed] (-0.4,0) -- (7.4,0);
		\foreach \x in {0,1,2,3,4,5,6,7}
		{
			\filldraw (\x,0) circle (2pt);
		}
		\draw[black] (0,0) arc (180:0:1 and 1);
		\draw[black] (1,0) arc (180:0:2.5 and 1.5);
		\draw[black] (3,0) arc (180:0:0.5 and 1);
		\draw[black] (5,0) arc (180:0:1 and 1);
	\end{tikzpicture}
	\hspace{0.3cm}
	\begin{tikzpicture}[scale = 0.4] 
		\node at (-0.5,1.5) {\upshape {\footnotesize{$P_2$}}};
		\draw[dashed] (-0.4,0) -- (7.4,0);
		\foreach \x in {0,1,2,3,4,5,6,7}
		{
			\filldraw (\x,0) circle (2pt);
		}
		\draw[black] (0,0) arc (180:0:1 and 1);
		\draw[black] (1,0) arc (180:0:1.5 and 1);
		\draw[black] (3,0) arc (180:0:1.5 and 1);
		\draw[black] (5,0) arc (180:0:1 and 1);
	\end{tikzpicture}
	\hspace{0.2cm}
	\begin{tikzpicture}[scale = 0.4] 
		\node at (-0.5,1.5) {\upshape {\footnotesize{$P_3$}}};
		\draw[dashed] (-0.4,0) -- (7.4,0);
		\foreach \x in {0,1,2,3,4,5,6,7}
		{
			\filldraw (\x,0) circle (2pt);
		}
		\draw[black] (0,0) arc (180:0:0.5 and 1);
		\draw[black] (2,0) arc (180:0:1 and 1);
		\draw[black] (3,0) arc (180:0:1 and 1);
		\draw[black] (6,0) arc (180:0:0.5 and 1);
	\end{tikzpicture}
	\]
	\[
	\begin{tikzpicture}[scale = 0.4] 
		\node at (-0.5,1.5) {\upshape {\footnotesize{$P_4$}}};
		\draw[dashed] (-0.4,0) -- (7.4,0);
		\foreach \x in {0,1,2,3,4,5,6,7}
		{
			\filldraw (\x,0) circle (2pt);
		}
		\draw[black] (0,0) arc (180:0:0.5 and 1);
		\draw[black] (2,0) arc (180:0:1 and 1);
		\draw[black] (3,0) arc (180:0:1.5 and 1);
		\draw[black] (5,0) arc (180:0:1 and 1);
	\end{tikzpicture}
	\hspace{0.2cm}
	\begin{tikzpicture}[scale = 0.4] 
		\node at (-0.5,1.5) {\upshape {\footnotesize{$P_5$}}};
		\draw[dashed] (-0.4,0) -- (7.4,0);
		\foreach \x in {0,1,2,3,4,5,6,7}
		{
			\filldraw (\x,0) circle (2pt);
		}
		\draw[black] (0,0) arc (180:0:1 and 1);
		\draw[black] (1,0) arc (180:0:1.5 and 1);
		\draw[black] (3,0) arc (180:0:1 and 1);
		\draw[black] (6,0) arc (180:0:0.5 and 1);
	\end{tikzpicture}
	\]
	For \(i \in \{1, 2, 3, 4, 5\}\) we denote by \(\mathcal{M}_n(P_i)\) the set of Stoimenow matchings with \(n\) arcs avoiding the pattern \(P_i\), and $\M(P_i)=\cup_{n\geq 0}\M_n(P_i)$. Two patterns $P$ and $Q$ are called \textit{Wilf-equivalent} if $|\mathcal{M}_n(P)|=|\mathcal{M}_n(Q)|$ for all $n\geq 0$.
	
	We also introduce four infinite families of forbidden patterns that generalize the patterns $P_2$--$P_5$ (which correspond to the case $k=4$ as follows): \\[-3mm]
	\[
	\begin{tikzpicture}[scale = 0.35] 
		\node at (-0.5,1.7) {\upshape {\footnotesize{$P^k_2$}}};
		\draw[dashed] (-0.4,0) -- (12.4,0);
		\foreach \x in {0,1,2,3,4,5,6,7,8,10,11,12}
		{
			\filldraw (\x,0) circle (2pt);
		}
		
		\draw[domain=0:90, smooth, variable=\t, shift={(8,0)}] 
		plot ({-cos(\t)}, {sin(\t)});
		
		\draw[domain=0:90, smooth, variable=\t, shift={(10,0)}] 
		plot ({cos(\t)}, {sin(\t)});
		
		\draw[black] (0,0) arc (180:0:1 and 1);
		\draw[black] (1,0) arc (180:0:1.5 and 1);
		\draw[black] (3,0) arc (180:0:1.5 and 1);
		\draw[black] (5,0) arc (180:0:1.5 and 1);
		\draw[black] (10,0) arc (180:0:1 and 1);
		\node[anchor=north west] at (-0.25, -0.05) {$\underbrace{ \hspace{4.2cm}}$};
		\node[anchor=north west] at (4.5, -1) {\footnotesize{$k$-chain}};
	\end{tikzpicture}
	\hspace{0.3cm}
	\begin{tikzpicture}[scale = 0.35] 
		\node at (-0.5,1.7) {\upshape {\footnotesize{$P^k_3$}}};
		\draw[dashed] (-0.4,0) -- (12.4,0);
		\foreach \x in {0,1,2,3,4,5,6,8,9,10,11,12}
		{
			\filldraw (\x,0) circle (2pt);
		}
		\draw[domain=0:90, smooth, variable=\t, shift={(6,0)}] 
		plot ({-cos(\t)}, {sin(\t)});
		
		\draw[domain=0:90, smooth, variable=\t, shift={(8,0)}] 
		plot ({cos(\t)}, {sin(\t)});
		
		\draw[black] (0,0) arc (180:0:0.5 and 1);
		\draw[black] (2,0) arc (180:0:1 and 1);
		\draw[black] (3,0) arc (180:0:1.5 and 1);
		\draw[black] (8,0) arc (180:0:1 and 1);
		\draw[black] (11,0) arc (180:0:0.5 and 1);
		\node[anchor=north west] at (1.65, -0.05) {$\underbrace{ \hspace{2.8cm}}$};
		\node[anchor=north west] at (4.2, -1) {\footnotesize{$(k-2)$-chain}};
	\end{tikzpicture}
	\]
	\[
	\begin{tikzpicture}[scale = 0.35] 
		\node at (-0.5,1.7) {\upshape {\footnotesize{$P^k_4$}}};
		\draw[dashed] (-0.4,0) -- (12.4,0);
		\foreach \x in {0,1,2,3,4,5,6,7,8,10,11,12}
		{
			\filldraw (\x,0) circle (2pt);
		}
		\draw[domain=0:90, smooth, variable=\t, shift={(8,0)}] 
		plot ({-cos(\t)}, {sin(\t)});
		
		\draw[domain=0:90, smooth, variable=\t, shift={(10,0)}] 
		plot ({cos(\t)}, {sin(\t)});
		
		\draw[black] (0,0) arc (180:0:0.5 and 1);
		\draw[black] (2,0) arc (180:0:1 and 1);
		\draw[black] (3,0) arc (180:0:1.5 and 1);
		\draw[black] (5,0) arc (180:0:1.5 and 1);
		\draw[black] (10,0) arc (180:0:1 and 1);
		\node[anchor=north west] at (1.65, -0.05) {$\underbrace{ \hspace{3.5cm}}$};
		\node[anchor=north west] at (5, -1) {\footnotesize{$(k-1)$-chain}};
	\end{tikzpicture}
	\hspace{0.3cm}
	\begin{tikzpicture}[scale = 0.35] 
		\node at (-0.5,1.7) {\upshape {\footnotesize{$P^k_5$}}};
		\draw[dashed] (-0.4,0) -- (12.4,0);
		\foreach \x in {0,1,2,3,4,5,6,8,9,10,11,12}
		{
			\filldraw (\x,0) circle (2pt);
		}
		
		\draw[domain=0:90, smooth, variable=\t, shift={(6,0)}] 
		plot ({-cos(\t)}, {sin(\t)});
		
		\draw[domain=0:90, smooth, variable=\t, shift={(8,0)}] 
		plot ({cos(\t)}, {sin(\t)});
		
		\draw[black] (0,0) arc (180:0:1 and 1);
		\draw[black] (1,0) arc (180:0:1.5 and 1);
		\draw[black] (3,0) arc (180:0:1.5 and 1);
		\draw[black] (8,0) arc (180:0:1 and 1);
		\draw[black] (11,0) arc (180:0:0.5 and 1);
		\node[anchor=north west] at (-0.35, -0.05) {$\underbrace{ \hspace{3.5cm}}$};
		\node[anchor=north west] at (3, -1) {\footnotesize{$(k-1)$-chain}};
	\end{tikzpicture}
	\]
	
	In Section~\ref{sec-matching-Catalan}, we prove that Stoimenow matchings avoiding any of the patterns $P_1$--$P_5$ are counted by the Catalan numbers and establish a generalized Wilf-equivalence result for $P_2$--$P_5$ in terms of $P^k_2$--$P^k_5$, as stated in Theorems~\ref{thm-Catalan} and~\ref{thm1-general}.
	
	\begin{thm}\label{thm-Catalan} For $1\leq i\leq 5$ and $n\geq 0$, we have $|\M_n(P_i)|=C_n$.\end{thm}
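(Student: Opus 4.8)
The plan is to reduce to two representative patterns and then count each. First, observe that $P_2,P_3,P_4,P_5$ are exactly the $k=4$ members of the families $P^k_2,P^k_3,P^k_4,P^k_5$, so Theorem~\ref{thm1-general} yields $|\M_n(P_2)|=|\M_n(P_3)|=|\M_n(P_4)|=|\M_n(P_5)|$ for all $n$. Hence it suffices to establish $|\M_n(P_1)|=C_n$ and $|\M_n(P_j)|=C_n$ for a single $j\in\{2,3,4,5\}$, for which I would take $j=2$ (avoidance of a $4$-chain).

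For $P_1$ I would show that $\M_n(P_1)$ is precisely the set of \emph{nonnesting} matchings of length $n$ — this is the ``particular pattern'' alluded to above — which is counted by $C_n$ \cite{Stanley}. One inclusion is immediate: a nonnesting matching has no Type~I or Type~II arcs (both are nestings), hence lies in $\M_n$, and it cannot contain $P_1$ because $P_1$ itself contains the nesting formed by its two middle arcs. For the converse I would prove that any Stoimenow matching containing a nesting $[a,b]\supset[c,d]$ contains $P_1$: the absence of Type~I forces $a\le c-2$ and the absence of Type~II forces $b\ge d+2$, and then a descent argument — repeatedly replacing the outer arc $[a,b]$ by a suitable arc that crosses it and still nests over $[c,d]$, using the Stoimenow conditions to control which endpoint type occupies the ``free'' positions next to the current outer opener and outer closer — produces an arc crossing the outer arc from the left with closer in the interval $(a,c)$ and an arc crossing it from the right with opener in $(d,b)$; together with the outer arc and $[c,d]$ these four arcs form an occurrence of $P_1$. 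The main obstacle here is coordinating the leftward and rightward extensions so that they refer to a common outer arc, since the two descents push the outer arc in opposite directions; one fixes this by choosing the outer arc of the nesting extremally (e.g.\ with largest possible opener, or smallest possible closer) before extending on the opposite side.

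For $P_2$ I would derive the Catalan functional equation $F(t)=1+tF(t)^2$ for $F(t)=\sum_{n\ge0}|\M_n(P_2)|\,t^n$ by a structural decomposition. The two basic features of a Stoimenow matching are that its smallest closer is matched to $1$ (so the openers occupy an initial block $1,2,\dots,b_1-1$ and the arcs opened there have strictly increasing closers) and, dually, that its largest opener is matched to $2n$; using these I would cut a nonempty $P_2$-avoiding Stoimenow matching along the arc $[1,b_1]$ into an ``inner'' and an ``outer'' Stoimenow matching, each again $P_2$-avoiding, so that the arc count splits as $1+(\text{inner})+(\text{outer})$. Equivalently, one may pass to the $\oplus$-decomposition into irreducible components — the Stoimenow property and $4$-chain-avoidance are both preserved by $\oplus$, since a $4$-chain, having overlapping consecutive arcs, cannot straddle a cut — and show that irreducible $P_2$-avoiding Stoimenow matchings have generating function $tC(t)$. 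The technical heart of this case is verifying that the chosen decomposition neither creates nor destroys a $4$-chain across the cut; once $|\M_n(P_2)|=C_n$ is in hand, the counts for $P_3,P_4,P_5$ follow from Theorem~\ref{thm1-general}, completing the proof.
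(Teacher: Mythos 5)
Your overall strategy coincides with the paper's: Theorem~\ref{thm1-general} at $k=4$ reduces everything to $P_1$ and one of $P_2,\dots,P_5$; the set $\M_n(P_1)$ is identified with the nonnesting matchings (the paper's Lemma~\ref{lem-Nonnesting}, which fixes the nesting pair extremally --- maximal outer opener, minimal inner opener --- and then extracts one arc crossing the outer arc from the left and one from the right, exactly the configuration you describe); and $\M_n(P_2)$ is counted via the Catalan recurrence. The $P_1$ half of your argument is essentially the paper's and is sound.

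The gap is in the $P_2$ half. The decomposition you propose --- cutting along the arc $[1,b_1]$, or equivalently showing that irreducible $P_2$-avoiders are counted by $C_{n-1}$ by deleting that arc --- is not a bijection. Concretely, $\M_3$ contains exactly two irreducible matchings, $\{[1,3],[2,5],[4,6]\}$ and the $3$-crossing $\{[1,4],[2,5],[3,6]\}$; deleting the arc containing $1$ from either one and relabelling yields the \emph{same} matching $\{[1,3],[2,4]\}$, while $\{[1,2],[3,4]\}\in\M_2(P_2)$ is never produced. (The variant in which the ``inner'' part consists of the arcs crossing $[1,b_1]$ fails for the same reason: $\{[1,3],[2,4],[5,6]\}$ and $\{[1,3],[2,5],[4,6]\}$ would both split as the pair $(\{[1,2]\},\{[1,2]\})$.) So the split $1+(\text{inner})+(\text{outer})$ does not enumerate $\M_{k-1}(P_2)\times\M_{n-k}(P_2)$, and $F=1+tF^2$ does not follow. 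Note also that the real difficulty is not, as you suggest, checking that a $4$-chain cannot straddle the cut --- that part is easy and correct --- but that the cut itself is not invertible. The paper's Lemma~\ref{lem-P2-catalan} repairs precisely this point: one deletes not $[1,b_1]$ but the \emph{reduction arc} (the arc whose closer lies immediately to the right of the last opener), and the inverse ``gluing'' map needs a two-case rule for where to insert the new opener (according to whether the first and last arcs of $M'$ overlap) so as to land back in an irreducible $P_2$-avoiding Stoimenow matching. Some such nontrivial insertion/deletion rule is unavoidable here, and your sketch does not supply one.
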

	
	\begin{thm}\label{thm1-general}   We have $|\M_n(P^k_2)|=|\M_n(P^k_3)|=|\M_n(P^k_4)|=|\M_n(P^k_5)|$ for all $k\geq 1$. \end{thm}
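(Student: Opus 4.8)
The plan is to prove the four quantities agree by first extracting one equality for free from a symmetry of $\M_n$, and then relating the remaining classes by surgery on a canonically chosen long chain (or, alternatively, through a shared functional equation).

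The easy step is $|\M_n(P^k_4)|=|\M_n(P^k_5)|$. The left--right reflection $\rho$, which sends the arc $[a,b]$ of a matching on $\{1,\dots,2n\}$ to the arc $[2n+1-b,\,2n+1-a]$, is an involution on matchings of length $n$; it interchanges Type~I and Type~II configurations, hence preserves the Stoimenow property, it carries every $j$-chain to a $j$-chain, and it carries an arc lying entirely to the left of a subconfiguration to an arc lying entirely to its right. Consequently $\rho$ exchanges occurrences of $P^k_4$ (an arc followed by a disjoint $(k-1)$-chain) with occurrences of $P^k_5$, and restricts to a bijection $\M_n(P^k_4)\to\M_n(P^k_5)$. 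It then remains to prove $|\M_n(P^k_2)|=|\M_n(P^k_4)|$ and $|\M_n(P^k_3)|=|\M_n(P^k_4)|$.

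Here I would first reformulate the avoidance conditions in chain terms: if $q^{*}$ denotes the smallest closer of $M$, then $M$ avoids $P^k_4$ exactly when every $(k-1)$-chain of $M$ has its opening arc in the initial block of openers $\{1,\dots,q^{*}-1\}$; $M$ avoids $P^k_2$ exactly when $M$ has no $k$-chain; and $M$ avoids $P^k_3$ exactly when no $(k-2)$-chain of $M$ simultaneously has an arc entirely to its left and an arc entirely to its right. I would then attempt a bijection $\M_n(P^k_4)\to\M_n(P^k_2)$ that, on a matching carrying $k$-chains, selects the lexicographically least $k$-chain, detaches its leftmost arc, and reinserts that arc into the initial opener block; avoidance of $P^k_4$ in the source makes the target position of the reinserted arc forced, so the move is reversible, and the construction is designed so that (suitably iterated) it reaches a member of $\M_n(P^k_2)$. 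The equality $|\M_n(P^k_3)|=|\M_n(P^k_4)|$ would be handled by an analogous surgery trading the right-flanking arc of a $P^k_3$-occurrence for one additional arc of the chain. A parallel, non-bijective route is to set up a generating-tree decomposition of $\M_n$ --- adjoin a new arc whose closer is the new rightmost point at one of the ``active'' insertion sites --- and check that avoiding $P^k_2$, $P^k_3$, or $P^k_4$ each removes the same number of active sites in the appropriate refinement, so that the three classes satisfy one and the same one-catalytic-variable functional equation.

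The main obstacle, in either route, is canonicity. A Stoimenow matching typically contains many overlapping maximal chains, so ``the chain to operate on'' must be pinned down by a rule that the inverse surgery provably regenerates, and the core of the proof is to show that this canonical choice --- and the associated termination and counting bookkeeping --- is preserved throughout; by contrast, checking that each individual arc move creates no Type~I or Type~II arc is a bounded case analysis. The degenerate small cases $k\le 2$, where $P^1_2$, $P^1_4$, $P^1_5$ all reduce to a single arc and $P^2_2$, $P^2_3$, $P^2_4$ to two-arc patterns, are verified directly, and the instance $k=4$ recovers the corresponding part of Theorem~\ref{thm-Catalan}.
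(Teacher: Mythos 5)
Your overall architecture matches the paper's: the left--right reflection argument for $|\M_n(P^k_4)|=|\M_n(P^k_5)|$ is exactly the paper's first reduction (and your check that reflection swaps Type~I and Type~II configurations, preserves chains, and hence exchanges $P^k_4$- with $P^k_5$-occurrences is correct), your reformulations of the three avoidance conditions in chain terms are accurate, and the paper likewise finishes with two surgery bijections, one linking $P^k_2$ with $P^k_4$ and one linking $P^k_3$ with $P^k_5$ (rather than your $P^k_3$ with $P^k_4$ --- an immaterial difference once reflection is in place).

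However, the heart of the theorem is the surgery bijection, and your proposal does not actually construct it: you explicitly defer the canonicity, termination and reversibility issues, and that is precisely where all the work lies. Moreover, the specific move you sketch --- detach the leftmost arc of the lexicographically least $k$-chain and reinsert it into the initial opener block --- is problematic as stated. In a Stoimenow matching the opener of that arc sits inside a maximal crossing with consecutive openers and closers, and extracting a single opener (or a single whole arc) from such a block generically creates a Type~I or Type~II configuration with its former neighbours. The paper's map $\Phi$ instead relocates the \emph{entire} maximal crossing of consecutive openers containing the relevant opener, leaves all closers fixed, and the landing site is not simply ``the initial opener block'' but the position immediately before the set $X$ of openers under the flanking arc whose closers lie beyond the crossing; this choice is what makes the move simultaneously Stoimenow-preserving, $P^k_4$-destroying, $P^k_2$-creating, and invertible (the inverse being keyed to the rightmost occurrence of the dual pattern). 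Without pinning down these details --- or, in your alternative route, actually exhibiting the common functional equation --- the proposal remains a plan rather than a proof.
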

	
	In Section~\ref{sec-bijection}, we recall the known bijections from~\cite{Bousquet-Claesson-Dukes-Kitaev,Claesson-Dukes-Kitaev} between Stoimenow matchings, $\textbf{(2+2)}$-free posets, ascent sequences, and Fishburn permutations. Furthermore, we investigate their restrictions to pattern-avoiding subsets, as outlined in Figure~\ref{fig-bijection-P1-P2}, where we do not include  $\mathcal{A}_n(\Psi(P_1))$ and $\mathcal{F}_n(\Upsilon(P_1))$ as these sets are not counted by Catalan numbers. Note that in Figure~\ref{fig-bijection-P1-P2}, the subsets labeled A (resp. B) correspond to each other under the bijections.
	
	\begin{figure}
		\centering
		\begin{tikzpicture}[scale=0.3, rounded corners = 3pt]
			\begin{scope}
				\draw (3,5.5) node {\scriptsize{$\M_n$: Stoimenow matchings}};
				\draw (0.4,2.5) node {\tiny{$\M_n(P_1)$}};
				\draw (7.8,3.5) node {\tiny{$\M_n(P_2)$}};
				\draw (-5,1) rectangle (12,6.5);
				\draw (-4.3,1.5) rectangle (5,4);
				\draw (2.8,2.3) rectangle (11.5,4.8);
				\draw (-3.8,3.4) node {\scriptsize{$A$}};
				\draw (11,4.2) node {\scriptsize{$B$}};
			\end{scope}
			\begin{scope}[xshift = 130ex]
				\draw (3,5.5) node {\scriptsize{$\mathcal{P}_n$}: \textbf{(2+2)}-free posets};
				\draw (0.3,2.3) node[text width=3cm, align=center, font=\tiny] {$\mathcal{P}_n\textbf{(3+1)}$:\\ \textbf{(2+2, 3+1)}-free posets};
				\draw (7.8,3.5) node[text width=3cm, align=center, font=\tiny] {$\mathcal{P}_n\textbf{(N)}$:\\ \textbf{(2+2, N)}-\\ free posets};
				\draw (-5,1) rectangle (12,6.5);
				\draw (-4.3,1.5) rectangle (5,4);
				\draw (2.8,2.3) rectangle (11.5,4.8);
				\draw (-3.8,3.4) node {\scriptsize{$A$}};
				\draw (11,4.2) node {\scriptsize{$B$}};
			\end{scope}
			\begin{scope}[yshift = -55ex]
				\draw (3.3,5.5) node {\scriptsize{$\F_n$: Fishburn permutations}};
				\draw (4,3) node[text width=3cm, align=center, font=\tiny] {$\F_n(3142)$:\\ 3142-avoiding\\ Fishburn permutations};
				\draw (-5,1) rectangle (12,6.5);
				\draw (-0.5,1.5) rectangle (8.2,4.5);
				\draw (7.6,4) node {\scriptsize{$B$}};
			\end{scope}
			\begin{scope}[xshift = 130ex, yshift = -55ex]
				\draw (3.7,5.5) node {\scriptsize{$\A_n$: Ascent sequences}};
				\draw (3.8,3) node[text width=3cm, align=center, font=\tiny] {$\A_n(101)$:\\ 101-avoiding\\ ascent sequences};
				\draw (-5,1) rectangle (12,6.5);
				\draw (-0.5,1.5) rectangle (8.2,4.5);
				\draw (7.6,4) node {\scriptsize{$B$}};
			\end{scope}
			\draw[->] (3,0.8) -- (3,-2.2);
			\draw (5.6,-0.8) node {\footnotesize{$\Upsilon=\Lambda\circ \Psi$}};
			\draw[->] (12.2,3.5) -- (16.2,3.5);
			\draw (14.2,4.2) node {\footnotesize{$\Omega$}};
			\draw[->] (12.2,1) -- (16.2,-2.5);
			\draw (16.7, -0.4) node {\footnotesize{$\Psi=\mathcal{O}\circ \Omega$}};
			\draw[<-] (12.2,-5.5) -- (16.2,-5.5);
			\draw (14.2,-4.8) node {\footnotesize{$\Lambda$}};
			\draw[->] (25.2,0.8) -- (25.2,-2.3);
			\draw (25.8, -0.9) node {\footnotesize{$\mathcal{O}$}};
		\end{tikzpicture}
		\caption{A summary of the sets and bijections in Section~\ref{sec-bijection}.}
		\label{fig-bijection-P1-P2}
	\end{figure}
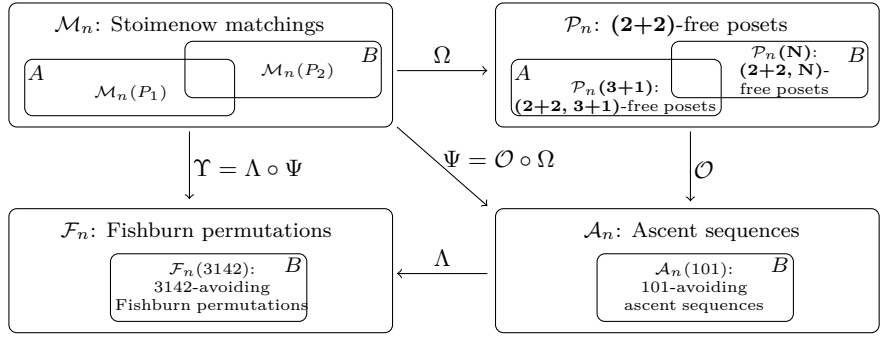
	
	In Section~\ref{sec-stats}, we use the bijections presented in Section~\ref{sec-bijection} to study the distribution of statistics over pattern-avoiding Stoimenow matchings and other Fishburn structures. We focus on $\M_n(P_1)$ and $\M_n(P_2)$ because these sets have natural counterparts among $\mathbf{(2+2)}$-free posets counted by the Catalan numbers. However, one can use the bijections in Theorem~\ref{thm1-general} for $k=4$, which link $\M_n(P_2)$--$\M_n(P_5)$, to obtain induced distribution results for $\M_n(P_3)$--$\M_n(P_5)$ that we do not investigate in this paper.
	
	Given $M \in \mathcal{M}_n$, let $\crr(M)$ denote the largest $k$ such that $M$ contains a $k$-crossing, 
	which was studied by Chen et~al.~\cite{Chen-Deng-Du-Stanley-Yan}. 
	For a poset $P$, $\wid(P)$ denotes the width of $P$, that is, the maximum size of an antichain.  Let $\mathcal{D}_n$ denote the set of Dyck paths of semilength $n$, and let $\mbox{\sf{h}}$ be the height statistic on Dyck paths, 
	as defined in Section~\ref{proof-thm-1-3}. 
	We refer to Section~\ref{sec-bijection} for the definitions of the sets of posets $\PP_n\mathbf{(3+1)}$ and $\PP_n\mathbf{(N)}$. 
	The following theorem will be proved in Section~\ref{proof-thm-1-3}.
	
	\begin{thm}\label{thm-equi-cr-nr-P1}
		We have
		\begin{align}\label{sta-cr-nr-P1}
			\sum_{M \in \M_n(P_1)} x^{\mbox{\sf{\footnotesize cr}}(M)}= \sum_{P\in \mathcal{P}_n{\bf (3+1)}} x^{\mbox{\sf{\footnotesize w}}(P)}= \sum_{P\in \mathcal{P}_n{\bf (N)}} x^{\mbox{\sf{\footnotesize w}}(P)} =\sum_{\mu \in \D_n} x^{\mbox{\sf{\footnotesize h}}(\mu)}.
		\end{align}
	\end{thm}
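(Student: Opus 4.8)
The plan is to establish the chain of equalities by combining the bijections from Section~\ref{sec-bijection} with a careful tracking of how the relevant statistics transform. The backbone is the composite bijection $\Psi$ (and its constituents $\Omega$, $\mathcal O$, $\Lambda$) between $\M_n$, $\PP_n$, $\A_n$, and $\F_n$, restricted to the pattern-avoiding subsets labeled $A$ in Figure~\ref{fig-bijection-P1-P2}. I would proceed equality by equality, from left to right.

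First, for the identity $\sum_{M \in \M_n(P_1)} x^{\crr(M)} = \sum_{P \in \PP_n\mathbf{(3+1)}} x^{\wid(P)}$: Theorem~\ref{thm-Catalan} and the restricted bijection $\Omega$ (established in Section~\ref{sec-bijection}) already give a bijection between $\M_n(P_1)$ and $\PP_n\mathbf{(3+1)}$. It remains to show this bijection carries $\crr$ to $\wid$. The key observation is that under the standard matching-to-poset correspondence, a $k$-crossing in $M$ corresponds exactly to a $k$-element antichain in $P$ (arcs that pairwise cross become pairwise incomparable elements), so the maximum crossing size equals the width. I would verify this by unwinding the definition of $\Omega$: the arcs of $M$ index the elements of $P$, and two arcs cross if and only if neither of the associated intervals precedes the other, which is precisely incomparability in the interval-order representation of a $\mathbf{(2+2)}$-free poset. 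One must also check that the $P_1$-avoidance condition on matchings matches $\mathbf{(3+1)}$-freeness on posets, but this is part of what Section~\ref{sec-bijection} delivers.

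Next, $\sum_{P \in \PP_n\mathbf{(3+1)}} x^{\wid(P)} = \sum_{P \in \PP_n\mathbf{(N)}} x^{\wid(P)}$: here I would exhibit a width-preserving bijection between $\mathbf{(2+2,3+1)}$-free and $\mathbf{(2+2,N)}$-free posets. Since both classes are counted by $C_n$, a bijection exists; the content is that it preserves width. A clean route is to note that for $\mathbf{(2+2)}$-free posets, both additional forbidden subposets $\mathbf{3+1}$ and $\mathbf N$ restrict the poset in ways that can be described via the level structure (the canonical partition into strict down-sets), and width can be read off this structure identically in both cases. Alternatively, one can pass through a common third object — Dyck paths — directly: map each such poset to a Dyck path recording the level sizes, and check the antichain bound translates to the height. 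Finally, for $\sum_{P \in \PP_n\mathbf{(N)}} x^{\wid(P)} = \sum_{\mu \in \D_n} x^{\h(\mu)}$, I would use the classical bijection between $\mathbf{(2+2,N)}$-free posets (equivalently, certain ascent sequences, via $\mathcal O$) and Dyck paths, verifying that width corresponds to the height $\h$ defined in Section~\ref{proof-thm-1-3}. Concretely, the level sizes $c_1, c_2, \ldots$ of the poset become the sequence of peak heights of $\mu$, and the width — being the maximum level size for these restricted posets — becomes the maximum height $\h(\mu)$.

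The main obstacle I anticipate is the middle steps: proving that width is genuinely the \emph{maximum level size} for $\mathbf{(2+2,3+1)}$-free and $\mathbf{(2+2,N)}$-free posets (a priori an antichain could straddle several levels), and then pinning down exactly which Dyck-path statistic the level-size sequence controls. For a general $\mathbf{(2+2)}$-free poset an antichain can spread across levels, so one needs the extra $\mathbf{3+1}$- or $\mathbf N$-freeness to force every maximum antichain to sit within a single level — this is the crux and likely requires a short structural lemma. Once that is in hand, the remaining bookkeeping (matching $P_1$-avoidance with $\mathbf{3+1}$-freeness, and translating level sizes to $\h(\mu)$) follows from the definitions and the bijections already set up in Section~\ref{sec-bijection}.
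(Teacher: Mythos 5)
Your first equality is fine: for $M\in\M_n(P_1)$ (equivalently, a nonnesting matching) two elements of $\Omega(M)$ are incomparable exactly when the corresponding arcs overlap, and in the absence of nestings a set of pairwise overlapping arcs is automatically a $k$-crossing, so $\crr(M)=\wid(\Omega(M))$. This is a clean direct argument, different from (and arguably slicker than) what the paper does for that step. The problem is the rest of your plan. The structural lemma you correctly identify as the crux --- that in a $(\mathbf{2+2},\mathbf{3+1})$-free or $(\mathbf{2+2},\mathbf{N})$-free poset every maximum antichain sits inside a single level, so that width equals the maximum level size --- is false. Take the $4$-element poset with relations $a<c$ and $a<d$ and with $b$ isolated. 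It is $(\mathbf{2+2})$-free (the only $2$-chains share $a$), $(\mathbf{3+1})$-free (no $3$-chain), and $\mathbf{N}$-free (only two relations), so it lies in both $\PP_4(\mathbf{3+1})$ and $\PP_4(\mathbf{N})$. Its levels are $\{a,b\}$ and $\{c,d\}$, of maximum size $2$, yet $\{b,c,d\}$ is an antichain, so the width is $3$. Consequently the proposed route for the second and third equalities --- read off the level-size sequence, send it to peak heights of a Dyck path, and identify width with $\h$ --- collapses: the statistic ``maximum level size'' is simply not the width, and you have not specified any bijection $\PP_n(\mathbf{3+1})\to\PP_n(\mathbf{N})$ beyond asserting one exists.

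The paper takes a different and gap-free route: it uses the common Catalan-type decompositions of Section~\ref{subsec-decomp} ($M=\Theta(M_1)\oplus M_2$, $P=P^*\oplus P_{\text{II}}$, $Q=\widetilde P\oplus P''$, $\mu = U\mu_1 D\,\mu_2$) and shows that each of $\crr$, $\wid$ on $\PP_n(\mathbf{3+1})$, $\wid$ on $\PP_n(\mathbf{N})$, and $\h$ on $\D_n$ satisfies the same recursion $f(\text{object})=\max\{f(\text{first block})+1,\,f(\text{rest})\}$; the key identities $\wid(P^*)=\wid(P_{\text{I}})+1$ and $\wid(\widetilde P)=\wid(P')+1$ are proved by an argument that tracks antichains with arbitrary index sets, precisely because they need not be confined to one level. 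If you want to salvage your approach, you would need to replace the false lemma by such a recursion (or by some other correct description of maximum antichains in these classes); as written, the argument does not establish the last two equalities.
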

	
	The Narayana polynomials $N_n(x)$ are  defined by  
	\(
	N_n(x) = \sum_{k=1}^{n} \frac{1}{n} \binom{n}{k} \binom{n}{k-1} x^k,
	\)
	see OEIS~\cite[A001263]{OEIS}.
	The ordinary generating function for these polynomials  is
	\begin{equation}\label{Narayana-formula}
		N(x,t) =1+ \sum_{n=1}^{\infty} N_n(x) t^n= \frac{1+2t-t(1+x)-\sqrt{\bigl(1-t(1+x)\bigr)^{2}-4xt^{2}}}{2t}.
	\end{equation}
	
	Let $\nr(M)$ denote the largest integer $k$ such that $M$ contains a \( k \)-noncrossing.
	We also let $\mcr(M)$ denote the number of maximal crossings in $M\in\M_n$. 
	Theorems~\ref{thm-P1-nara}--\ref{thm-P1-P2-joint} below will be proved in Section~\ref{proofs-three-theorems}. 
	Theorem~\ref{thm-P1-P2-joint} generalizes Theorems~\ref{thm-P1-nara} and~\ref{thm-ballot}. 
	We refer to Sections~\ref{sec-bijection} and~\ref{stats-St-matchings-subsec}, respectively, for the definitions of the sets and statistics in the theorems.
	
	\begin{thm}\label{thm-P1-nara}
		We have
		\begin{align}\label{Narayana-sta-P1-P2}
			& \sum_{M \in \M_n(P_1)} x^{\mbox{\sf{\footnotesize mcr}}(M)}= \sum_{M \in \M_n(P_2)} x^{\mbox{\sf{\footnotesize mcr}}(M)} = \sum_{M \in \M_n(P_2)} x^{\mbox{\sf{\footnotesize nr}}(M)} 
			=\sum_{P \in \mathcal{P}_n\mathbf{(3+1)}} x^{\mbox{\sf{\footnotesize mag}}(P)} \notag \\
			= & \sum_{P \in \mathcal{P}_n\mathbf{(N)}} x^{\mbox{\sf{\footnotesize mag}}(P)}  = \sum_{P \in \mathcal{P}_n\mathbf{(N)}} x^{\mbox{\sf{\footnotesize h}}(P)}
			= \sum_{\alpha \in \A_n(101)} x^{\mbox{\sf{\footnotesize lmax}}(\alpha)} = \sum_{\pi \in \F_n(3142)} x^{\mbox{\sf{\footnotesize rmin}}(\pi)} =N_n(x).
		\end{align}
	\end{thm}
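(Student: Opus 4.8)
The plan is to establish the chain of equalities in \eqref{Narayana-sta-P1-P2} in two stages: first reduce everything to a single master statistic via the restricted bijections of Section~\ref{sec-bijection}, and then compute the generating function of that statistic directly, showing it equals $N(x,t)$ as given in \eqref{Narayana-formula}. Concretely, I would proceed as follows.

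First I would verify that each of the bijections $\Omega$, $\mathcal{O}$, $\Lambda$, $\Psi$, $\Upsilon$ appearing in Figure~\ref{fig-bijection-P1-P2} transports one of the listed statistics to the next. By Theorem~\ref{thm-Catalan} all the sets $\M_n(P_1)$, $\M_n(P_2)$, $\PP_n\mathbf{(3+1)}$, $\PP_n\mathbf{(N)}$, $\A_n(101)$, $\F_n(3142)$ have cardinality $C_n$, so it suffices to check the statistic-tracking on generators. The identity $\sum_{M\in\M_n(P_2)} x^{\mcr(M)} = \sum_{M\in\M_n(P_2)} x^{\nr(M)}$ is an internal claim about a single set: here I expect that in a $P_2$-avoiding matching, a maximal crossing and a maximal noncrossing run are forced to interleave in a rigid way, so that the two statistics literally coincide matching-by-matching (this should follow from the structural description of $\M_n(P_2)$ obtained in the proof of Theorem~\ref{thm-Catalan}). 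For the equality $\sum_{M\in\M_n(P_1)} x^{\mcr(M)} = \sum_{M\in\M_n(P_2)} x^{\mcr(M)}$, rather than a direct bijection I would route through the poset side: show that $\Psi$ restricts to a bijection $\M_n(P_1)\to\PP_n\mathbf{(3+1)}$ and $\M_n(P_2)\to\PP_n\mathbf{(N)}$ (this is the content of the ``$A$'' and ``$B$'' labels in the figure), and that $\mcr$ is carried to the magnitude statistic $\magg$ in both cases. The passages $\magg\leftrightarrow\h$ on $\PP_n\mathbf{(N)}$, $\magg\leftrightarrow\lmax$ on $\A_n(101)$ via $\mathcal{O}$, and $\lmax\leftrightarrow\rmin$ on $\F_n(3142)$ via the remaining bijection are then each a routine check that the relevant combinatorial quantity is preserved step by step.

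Having reduced to a single statistic — say $\lmax$ on $101$-avoiding ascent sequences, which is the most transparent model — I would then compute $\sum_{n\geq 0}\sum_{\alpha\in\A_n(101)} x^{\lmax(\alpha)} t^n$ and identify it with $N(x,t)$. The standard approach is to set up a recursive decomposition of a $101$-avoiding ascent sequence by its last entry (or by the position of its maximal value), obtaining a functional equation for the bivariate generating function; a $101$-avoiding ascent sequence has a clean ``staircase'' structure that should make the decomposition yield exactly the quadratic equation $t F^2 - (1 - t(1+x))F + 1 = 0$ satisfied by $N(x,t)$ from \eqref{Narayana-formula}. Alternatively, one can exhibit a direct bijection from $\A_n(101)$ to Dyck paths of semilength $n$ sending $\lmax$ to the number of peaks (or returns to the axis), since $N_n(x)$ is the peak-distribution generating polynomial; combined with the $\h$-equality already in the chain, this gives a second, more geometric route.

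The main obstacle I anticipate is not the final generating-function computation, which is classical, but the careful bookkeeping that each of the six statistics is genuinely preserved (not merely equidistributed) under the specific bijections of Section~\ref{sec-bijection} restricted to the Catalan subsets — in particular, pinning down the correct poset-side statistic $\magg$ and $\h$ and verifying that $\Psi$ does send $\M_n(P_1)$ onto $\PP_n\mathbf{(3+1)}$ and $\M_n(P_2)$ onto $\PP_n\mathbf{(N)}$. This is where the structural lemmas underlying Theorem~\ref{thm-Catalan} will do the heavy lifting, and I would organize the proof so that those lemmas are quoted rather than re-derived.
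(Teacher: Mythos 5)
Your overall strategy---transport the statistics along the restricted bijections and then pin down the generating function via a Catalan-type functional equation---is the right general shape, and several of your guesses are correct (e.g.\ $\nr(M)=\mcr(M)$ does hold pointwise on $\M_n(P_2)$, and $\Omega$ does carry $\mcr$ to $\magg$; note it is $\Omega$, not $\Psi$, that maps matchings to posets). But there is a genuine gap at the hinge of your argument: you cannot ``reduce everything to a single master statistic via the restricted bijections,'' because the bijections of Section~\ref{sec-bijection} do not connect the $P_1$ branch to the $P_2$ branch. The maps give you $\M_n(P_1)\leftrightarrow\PP_n\mathbf{(3+1)}$ on one side and $\M_n(P_2)\leftrightarrow\PP_n\mathbf{(N)}\leftrightarrow\A_n(101)\leftrightarrow\F_n(3142)$ on the other; the paper explicitly excludes $\A_n(\Psi(P_1))$ and $\F_n(\Upsilon(P_1))$ from Figure~\ref{fig-bijection-P1-P2} because those images are \emph{not} Catalan sets, so the $P_1$ chain cannot be continued to ascent sequences. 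Consequently the equality $\sum_{P\in\PP_n\mathbf{(3+1)}}x^{\magg(P)}=\sum_{P\in\PP_n\mathbf{(N)}}x^{\magg(P)}$, which your plan needs, is an equidistribution statement across two different subsets of $\PP_n$ and does not follow from ``statistic-tracking'' along any of the available maps. Your proposal then commits to a single generating-function computation (for $\lmax$ over $\A_n(101)$), which would certify only the $P_2$ branch and leave $\M_n(P_1)$ and $\PP_n\mathbf{(3+1)}$ unaccounted for.

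The paper closes this gap differently: it shows (Lemmas~\ref{lem-P1-3+1} and~\ref{lem-P2-N-101-3142}) that under the first-irreducible-block decompositions of Section~\ref{subsec-decomp}, the triple $(\mcr,\fcr,\bl)$ on $\M_n(P_1)$ and the corresponding triples on all the other structures satisfy \emph{identical} recurrences in the two cases ``first factor empty / nonempty,'' so equidistribution across both branches follows by induction on $n$ (Corollary~\ref{coro-equi-all}); a single functional equation $T=1+t(x-1)T+tT^2$ for $\sum_{M\in\M(P_1)}x^{\mcr(M)}t^{|M|}$ then identifies everything with $N(x,t)$. You could rescue your plan either by adopting this parallel-recurrence argument, or by carrying out \emph{two} independent generating-function computations (one for $\mcr$ over $\M(P_1)$, one for $\lmax$ over $\A(101)$) and checking that both satisfy the Narayana equation; as written, neither is done. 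A smaller issue: your appeal to ``it suffices to check the statistic-tracking on generators'' because all sets have cardinality $C_n$ is not a valid reduction---equal cardinality says nothing about whether a given bijection preserves a given statistic.
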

	
	A matching $M \in \M_n$ is said to be \textit{reducible} if there exists an integer $i$ with $1\le i<n$ such that the set $\{1,2,\dots,2i\}$ contains as many openers as closers; otherwise we say $M$ is \textit{irreducible}.
	Each matching can be decomposed into \textit{irreducible blocks}.
	Let $\bl(M)$ be the number of irreducible blocks of $M$. We also let $\fcr(M)$ denote the size of the \textit{first maximal crossing}; that is, $\fcr(M)$ is the largest possible $k$ such that there exists a $k$-crossing starting with $1$. The $\bl$ and $\fcr$ statistics have a symmetric distribution over $\M_n(P_1)$ and $\M_n(P_2)$. 
	
	\begin{thm}\label{thm-ballot}
		We have the following symmetric distribution in $y$ and $z$:
		\begin{align}\label{formula-fcr-block-C(x,t)}
			&\sum_{M \in \M(P_1)}\, y^{\mbox{\sf{\footnotesize fcr}}(M)} z^{\mbox{\sf{\footnotesize bl}}(M)} t^{|M|}
			= \sum_{M \in \M(P_2)}\,y^{\mbox{\sf{\footnotesize fcr}}(M)} z^{\mbox{\sf{\footnotesize bl}}(M)}t^{|M|}
			=\sum_{P \in \mathcal{P}{\bf (3+1)}}  y^{\mbox{\sf{\footnotesize min}}(P)} z^{\mbox{\sf{\footnotesize ssd}}(P)} t^{|P|}  \notag\\
			=&\sum_{P \in \mathcal{P}{\bf (N)}}  y^{\mbox{\sf{\footnotesize min}}(P)} z^{\mbox{\sf{\footnotesize smc}}(P)} t^{|P|}  
			=\sum_{\alpha \in \A(101)} y^{\mbox{\sf{\footnotesize zero}}(\alpha)} z^{\mbox{\sf{\footnotesize rmin}}(\alpha)} t^{|\alpha|} 
			=\sum_{\pi \in \F(3142)} y^{\mbox{\sf{\footnotesize idr}}(\pi)} z^{\mbox{\sf{\footnotesize lmax}}(\pi)} t^{|\pi|} \notag \\
			=&  1 + \frac{4  y z t}{\left(2  - y \left(1 - \sqrt{1 - 4 t}\right) \right) \left(2  - z \left(1 - \sqrt{1 - 4 t}\right) \right)}  
			=  1+yzt\,C(y,t) C(z,t),  
		\end{align}
		where $C(x,t) = \frac{1}{1 - x t C(t)}$ is the g.f.\ of the {\it ballot numbers}~{\rm \cite{Barcucci-Verri}}, recorded as entry {\rm A009766} in~{\rm \cite{OEIS}}.
	\end{thm}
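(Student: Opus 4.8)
The plan is to establish the identity on the Stoimenow‑matching side and then push it through the bijections $\Omega$, $\mathcal{O}$, $\Lambda$ (equivalently $\Psi=\mathcal{O}\circ\Omega$ and $\Upsilon=\Lambda\circ\Psi$) recalled in Section~\ref{sec-bijection}. Thus the first task is to check that, restricted to the Catalan‑counted subsets in Figure~\ref{fig-bijection-P1-P2}, these maps carry the pair $(\fcr,\bl)$ on $\M(P_1)$ (resp.\ on $\M(P_2)$) to $(\minn,\ssd)$ on $\PP\mathbf{(3+1)}$, to $(\minn,\smc)$ on $\PP\mathbf{(N)}$, to $(\zero,\rmin)$ on $\A(101)$, and to $(\idr,\lmax)$ on $\F(3142)$ (the statistics of the first maximal crossing being as in Section~\ref{stats-St-matchings-subsec}). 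Since all the bijections are explicit, this reduces to tracking where the first irreducible block and the first maximal crossing of a matching are sent. I expect the $\bl$‑coordinate to be painless: $\bl$ is additive under the left‑to‑right concatenation $M=M_1M_2\cdots M_b$ of matchings, and under each bijection concatenation corresponds to the natural ``append a component'' operation on the target structure, so $\bl$ goes to the corresponding component‑count statistic; transporting the $\fcr$‑coordinate is the more delicate bookkeeping but is still a direct verification on the explicit maps. Granting this, it suffices to prove
\[
\sum_{M\in\M(P_1)} y^{\fcr(M)}z^{\bl(M)}t^{|M|}=\sum_{M\in\M(P_2)} y^{\fcr(M)}z^{\bl(M)}t^{|M|}=1+yzt\,C(y,t)\,C(z,t).
\]

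To prove the matching identity I would use the irreducible‑block decomposition. Both $P_1$ and $P_2$ are irreducible as matchings (no proper initial segment of their $8$ endpoints is balanced), so any occurrence of $P_i$ in a matching lies inside a single irreducible block; hence $M\in\M_n(P_i)$ if and only if every irreducible block of $M$ avoids $P_i$. Writing $M=M_1\cdots M_b$ gives $\bl(M)=b$, and, since the blocks occur in left‑to‑right order, $\fcr(M)=\fcr(M_1)$. Letting $I_i(t)$ be the generating function of nonempty irreducible $P_i$‑avoiders by size and $\widetilde I_i(y,t)=\sum_{M_1} y^{\fcr(M_1)}t^{|M_1|}$ its refinement over nonempty irreducible $P_i$‑avoiders, the decomposition yields
\[
\sum_{M\in\M(P_i)} y^{\fcr(M)}z^{\bl(M)}t^{|M|}=1+\frac{z\,\widetilde I_i(y,t)}{1-z\,I_i(t)}.
\]
By Theorem~\ref{thm-Catalan} the unrefined generating function is $\frac{1}{1-I_i(t)}=C(t)$, whence $I_i(t)=tC(t)$ (using $tC(t)^2=C(t)-1$). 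So the whole theorem reduces to the single claim $\widetilde I_i(y,t)=yt\,C(y,t)=\frac{yt}{1-ytC(t)}$ for $i=1,2$, equivalently to the functional equation $\widetilde I_i=yt\bigl(1+C(t)\,\widetilde I_i\bigr)$, equivalently to the assertion that the number of irreducible $P_i$‑avoiding matchings of size $m$ with $\fcr=a$ equals $[t^{\,m-a}]C(t)^{a-1}$, a ballot number.

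The combinatorial heart, and the step I expect to be the main obstacle, is the structure lemma behind that functional equation: a canonical decomposition of an irreducible $P_i$‑avoiding matching relative to its first maximal crossing. Writing that crossing as the ``spine'' $[p_1,q_1],\dots,[p_a,q_a]$ with $p_1<\cdots<p_a<q_1<\cdots<q_a$, one has to show that avoidance of $P_i$ together with irreducibility forces every non‑spine arc into one of $a-1$ ``pockets'' determined by the spine, each of which may be filled, independently, by an arbitrary Catalan‑counted sub‑configuration — and, conversely, that every such filling is $P_i$‑avoiding, irreducible, and still has the chosen spine as its first maximal crossing. This is where the precise shapes of $P_1$ and $P_2$ enter, and although the two analyses run in parallel they must be carried out separately; the decomposition is of the same flavour as the one behind the $\mcr$/Narayana statement of Theorem~\ref{thm-P1-nara}, so I would isolate it once as a structure lemma and reuse it. From it, the recursion ``peel off the outermost spine arc, leaving either nothing or one Catalan pocket followed by a smaller irreducible $P_i$‑avoider'' reads off $\widetilde I_i=yt\bigl(1+C(t)\widetilde I_i\bigr)$, hence $\widetilde I_i=ytC(y,t)$.

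Finally, substituting $\widetilde I_i=ytC(y,t)$ and $I_i=tC(t)$ into the block formula gives $1+\frac{yzt\,C(y,t)}{1-ztC(t)}=1+yzt\,C(y,t)C(z,t)$, and the symmetry in $y$ and $z$ is then immediate from this closed form (if one wants it combinatorially, it follows from an involution on $\M(P_i)$ interchanging the spine length of the first block with the number of blocks). The two rational expressions displayed in the theorem are obtained by the routine rewriting $C(x,t)=\frac{2t}{\,2t-xt\bigl(1-\sqrt{1-4t}\bigr)}$ of $C(t)=\frac{1-\sqrt{1-4t}}{2t}$, after which $1+yzt\,C(y,t)C(z,t)$ becomes $1+\frac{4t^2\,yzt}{\bigl(2t-yt(1-\sqrt{1-4t})\bigr)\bigl(2t-zt(1-\sqrt{1-4t})\bigr)}$.
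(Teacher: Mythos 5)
Your overall strategy is sound and genuinely different from the paper's at the level of organization: you work with the block decomposition $M=M_1\cdots M_b$ and reduce everything to the refined generating function $\widetilde I_i(y,t)$ of \emph{irreducible} $P_i$-avoiders by $\fcr$, whereas the paper derives a single three-variable functional equation $T(x,y,z,t)=1+yzt\,T(x,1,z,t)\bigl(x-1+T(x,y,1,t)\bigr)$ from the decompositions $M=\Theta(M_1)\oplus M_2$ and $M=\V(M')\oplus M''$ and then sets $x=1$. Your identities $I_i(t)=tC(t)$, $\widetilde I_i=yt\bigl(1+C(t)\widetilde I_i\bigr)$, and the final substitution $1+\frac{z\widetilde I_i}{1-zI_i}=1+yzt\,C(y,t)C(z,t)$ are all correct and consistent with the paper's answer.

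However, the two places where you defer the work are exactly where the content lies, and one of them is harder than it needs to be. First, the ``spine and pockets'' structure lemma you flag as the combinatorial heart is strictly stronger than what is required: to get $\widetilde I_i=yt\bigl(1+C(t)\widetilde I_i\bigr)$, equivalently $\widetilde I_i=yt\cdot\sum_{M\in\M(P_i)}y^{\fcr(M)}t^{|M|}$, it suffices to know that the maps $\Theta\colon\M_{k-1}(P_1)\to\M_k^I(P_1)$ and $\V\colon\M_{k-1}(P_2)\to\M_k^I(P_2)$ are bijections onto the irreducible avoiders and that each increases $\fcr$ by exactly~$1$; this is what the paper verifies (in the proofs of Theorem~\ref{thm-equi-cr-nr-P1}(i) and Lemmas~\ref{lem-P1-3+1}--\ref{lem-P2-N-101-3142}), and for $P_2$ it rests on the nontrivial case analysis in the gluing procedure of Lemma~\ref{lem-P2-catalan} rather than on any clean pocket picture. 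You should either prove the pockets lemma for $P_2$ (not just assert that the analysis ``runs in parallel'' to $P_1$ --- the first maximal crossing of a $P_2$-avoider need not sit at the left end of its block, so the $P_1$/Dyck-path intuition does not transfer verbatim) or replace it by the weaker $\fcr$-shift statement. Second, the transport of $(\fcr,\bl)$ to $(\minn,\ssd)$, $(\minn,\smc)$, $(\zero,\rmin)$, $(\idr,\lmax)$ is asserted as ``direct verification''; the paper proves it by showing that all six triples satisfy the same recurrences under the corresponding decompositions (Corollary~\ref{coro-equi-all}), which is a nontrivial check (e.g.\ $\minn(P^*)=\minn(P_{\text{I}})+1$ uses the specific relabelling in the construction of $P^*$). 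As written, your argument is a correct reduction with the right generating functions, but both the structure lemma for $P_2$ and the statistic transport remain unproven.
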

	
	\begin{thm}\label{thm-P1-P2-joint}
		We have 
		\begin{small}
			\begin{align}
				&  \sum_{M \in \M(P_1)} x^{\mbox{\sf{\footnotesize mcr}}(M)} y^{\mbox{\sf{\footnotesize fcr}}(M)} z^{\mbox{\sf{\footnotesize bl}}(M)} t^{|M|}= \sum_{M \in \M(P_2)} x^{\mbox{\sf{\footnotesize mcr}}(M)} y^{\mbox{\sf{\footnotesize fcr}}(M)} z^{\mbox{\sf{\footnotesize bl}}(M)} t^{|M|}\notag\\
				=    & \sum_{P \in \mathcal{P}{\bf (3+1)}}  x^{\mbox{\sf{\footnotesize mag}}(P)} y^{\mbox{\sf{\footnotesize min}}(P)} z^{\mbox{\sf{\footnotesize ssd}}(P)} t^{|P|}  =  \sum_{P \in \mathcal{P}{\bf (N)}}  x^{\mbox{\sf{\footnotesize mag}}(P)} y^{\mbox{\sf{\footnotesize min}}(P)} z^{\mbox{\sf{\footnotesize smc}}(P)} t^{|P|}   \notag \\
				=   & \sum_{\alpha \in \A(101)} x^{\mbox{\sf{\footnotesize lmax}}(\alpha)}  y^{\mbox{\sf{\footnotesize zero}}(\alpha)} z^{\mbox{\sf{\footnotesize rmin}}(\alpha)} t^{|\alpha|} 
				=\sum_{\pi \in \F(3142)} x^{\mbox{\sf{\footnotesize rmin}}(\pi)} y^{\mbox{\sf{\footnotesize idr}}(\pi)} z^{\mbox{\sf{\footnotesize lmax}}(\pi)}t^{|\pi|} \notag \\
				=&  1 + \frac{x yzt \left( 1-y +t (x + y + x y-1) + (1 - y) \sqrt{1 + t^2 (1 - x)^2 - 2 t (1 + x)} \right) }{\left(1 + y (xt+yt -t -1)\right) \left(1 + t (x - 2 x z-1 ) + \sqrt{1 + t^2 (1 - x)^2 - 2 t (1 + x)}\right)} \label{main-formulas-dist} \\
				=  & 1+ \frac{xyzt}{(1- ytN(x,t)) \left(1- zt(N(x,t)+x-1) \right)}.\label{fomular-sta-P1-P2-joint}
			\end{align}
		\end{small}
	\end{thm}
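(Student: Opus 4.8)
The plan is to prove the identity by establishing a refined generating-function recurrence for one of the models — say $\M(P_1)$ tracked by the triple of statistics $(\mcr,\fcr,\bl)$ — then transporting it through the bijections of Section~\ref{sec-bijection} and checking that the closed forms \eqref{main-formulas-dist} and \eqref{fomular-sta-P1-P2-joint} solve it. The key structural observation is the same one underlying Theorem~\ref{thm-ballot}: a matching $M\in\M(P_1)$ decomposes uniquely into its irreducible blocks $M = B_1 B_2 \cdots B_{\bl(M)}$, concatenated left to right. Under this decomposition $\bl$ is additive, $t^{|M|}$ is multiplicative, and — crucially — $\fcr(M)$ depends only on the first block $B_1$ while each block $B_i$ contributes its own maximal crossings to $\mcr$. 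So if $G(x,y,z,t)$ denotes the left-hand side and $I(x,y,t) = \sum_{B \text{ irreducible}} x^{\mcr(B)} y^{\fcr(B)} t^{|B|}$ is the generating function for irreducible $P_1$-avoiders (with $z$ set to $1$ inside, since each block is one block), then the decomposition gives
\begin{equation*}
G(x,y,z,t) = 1 + zI(x,y,t)\cdot\frac{1}{1 - z\,I(x,1,t)},
\end{equation*}
where the factor $I(x,y,t)$ records the distinguished first block (carrying the $y$-weight) and the geometric series records the remaining blocks (where $\fcr$ no longer matters, so $y=1$).

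The heart of the argument is therefore to compute $I(x,y,t)$, the refined generating function for \emph{irreducible} Stoimenow matchings avoiding $P_1$, graded by size and by $(\mcr,\fcr)$. An irreducible $M$ begins with an arc $[1,2m]$ for some $m$, equivalently its first maximal crossing is a $\fcr$-crossing whose arcs are "filled in" by nested/chained material, and avoidance of $P_1$ severely constrains what can appear under and after this initial crossing. I expect that peeling off the first maximal crossing (of size $j = \fcr(M)$, contributing $y^j$ and $x$ once to $\mcr$) leaves, between and after its arcs, smaller $P_1$-avoiding matchings that are themselves decomposable, yielding a functional equation for $I$ of the shape $I(x,y,t) = \sum_{j\ge 1} y^j x\, t^{?}\,(\text{product of } C(x,t)\text{-type factors})$. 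Comparing with \eqref{fomular-sta-P1-P2-joint}, whose denominator factor $1 - ytN(x,t)$ strongly suggests $I(x,y,t) = \dfrac{ytN(x,t)}{1 - ytN(x,t)}\cdot(\text{correction})$ and whose second factor $1 - zt(N(x,t)+x-1)$ suggests the non-first blocks contribute a shifted Narayana weight $N(x,t)+x-1$, one reads off exactly what must be proved: the first irreducible block carries Narayana weight $N(x,t)$ in the $x$-variable, while a generic block carries $N(x,t)+x-1$. This matches Theorem~\ref{thm-P1-nara}, where $\mcr$ over all of $\M_n(P_1)$ is $N_n(x)$, since summing the block structure at $y=z=1$ must collapse to $N(x,t)$.

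Once the functional equation for $I$ is solved — this is the one genuinely computational step, and the main obstacle — the rest is bookkeeping: substitute into the displayed formula for $G$, clear denominators, and verify it equals both \eqref{main-formulas-dist} (an algebraic rearrangement, using $N(x,t) = \frac{1 - t(1+x) - \sqrt{(1-t(1+x))^2 - 4xt^2}}{2t}$ from \eqref{Narayana-formula} to rewrite $1 - ytN(x,t)$ and $1 - zt(N(x,t)+x-1)$ in radical form) and \eqref{fomular-sta-P1-P2-joint}. For the remaining equalities in the theorem — the $\M(P_2)$ sum and the four Fishburn-object sums — I would invoke the bijections $\Omega, \mathcal O, \Psi, \Lambda$ recalled in Section~\ref{sec-bijection} together with the statistic-correspondences established there (the same ones used to prove Theorems~\ref{thm-ballot} and~\ref{thm-P1-nara}): it suffices to check that each bijection sends the matching triple $(\mcr,\fcr,\bl)$ to the claimed triple on the target object, i.e. $(\mag,\min,\ssd)$ on $\PP\mathbf{(3+1)}$, $(\mag,\min,\smc)$ on $\PP\mathbf{(N)}$, $(\lmax,\zero,\rmin)$ on $\A(101)$, and $(\rmin,\idr,\lmax)$ on $\F(3142)$. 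Since Theorem~\ref{thm-ballot} already gives $(\fcr,\bl)\leftrightarrow(\min,\ssd)\leftrightarrow\cdots$ and Theorem~\ref{thm-P1-nara} already gives $\mcr\leftrightarrow\mag\leftrightarrow\cdots$ individually, the only new content is that these correspondences hold \emph{jointly}, which follows because the bijections act locally on the block/maximal-crossing decomposition; so I would phrase this as a short lemma recording the joint statistic transfer and then assemble the chain of equalities. The degenerations to earlier theorems (set $x=1$ to recover Theorem~\ref{thm-ballot}; extract the coefficient of $t^n$ at $y=z=1$ to recover Theorem~\ref{thm-P1-nara}) serve as consistency checks.
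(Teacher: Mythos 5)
Your overall architecture is sound and close in spirit to the paper's: the block decomposition $G(x,y,z,t)=1+zI(x,y,t)/(1-zI(x,1,t))$ is correct (maximal crossings live inside single irreducible blocks, $\fcr$ sees only the first block), and your plan for transferring the joint triple to the other five structures via parallel decompositions is essentially what the paper does in its Lemmas on $(\mcr,\fcr,\bl)$ versus $(\magg,\minn,\ssd)$, etc. The equalities with the $P_2$/poset/ascent-sequence/permutation sums are handled there by showing all six triples satisfy the same recurrences under the respective decompositions, which matches your "joint statistic transfer" lemma.

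However, there is a genuine gap at exactly the point you flag as "the main obstacle": you never compute $I(x,y,t)$, and the route you sketch for it (peel off the first maximal crossing, then read off the required form by comparing with the target formula \eqref{fomular-sta-P1-P2-joint}) is both undeveloped and circular --- you cannot use the identity to be proved to determine the key lemma. The paper closes this gap with a specific bijection $\Theta\colon \M_{k-1}(P_1)\to\M^{I}_{k}(P_1)$ (insert a new opener before position $1$ and a new closer after the last closer, then re-pair the $i$-th opener with the $i$-th closer), under which $\fcr(\Theta(M_1))=\fcr(M_1)+1$, $\mcr(\Theta(M_1))=\mcr(M_1)$ for $M_1\neq\emptyset$, and $|\Theta(M_1)|=|M_1|+1$. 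This parametrizes irreducible blocks by \emph{arbitrary} smaller $P_1$-avoiders and yields $I(x,y,t)=yt\bigl(x-1+T(x,y,1,t)\bigr)$, hence the closed self-referential equation $T(x,y,z,t)=1+yzt\,T(x,1,z,t)\bigl(x-1+T(x,y,1,t)\bigr)$, which is then solved by the specializations $y=1$ and $z=1$. Without $\Theta$ (or an equivalent device) your functional equation for $I$ is not closed, and the "one genuinely computational step" remains unproved; the rest of your argument cannot be assembled until it is.
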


	\section{Enumerative results and Wilf-equivalences}\label{sec-matching-Catalan}
	
	The enumerative results for patterns $P_1$ and $P_2$ are presented in Sections~\ref{P1-subsec} and~\ref{P2-sec}, respectively, and the proofs of Theorems~\ref{thm-Catalan} and~\ref{thm1-general} are given in Section~\ref{sub-genete}.
	
	\subsection{Pattern $P_1$}\label{P1-subsec}
	
	As explained above, nonnesting matchings on $\{1, 2, \ldots, 2n\}$, which are counted by the $n$-th Catalan number (see~\cite[Exercise~6.19]{Stanley}), form a subset of Stoimenow matchings. This observation resolves the problem raised by Bevan et al.~\cite{Bevan-Cheon-Kitaev}. The following lemma provides an alternative description of the set of nonnesting matchings in terms of pattern avoidance.
	
	\begin{lem}\label{lem-Nonnesting}
		The set $\M_n(P_1)$ coincides with the set of nonnesting matchings on $\{1, 2, \ldots, 2n\}$ for any $n \geq 0$.
		Thus, $|\mathcal{M}_n(P_1)| = C_n$. 
	\end{lem}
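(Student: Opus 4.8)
The plan is to show that a Stoimenow matching avoids $P_1$ if and only if it is nonnesting, i.e.\ contains no two arcs $[a,b]$, $[c,d]$ with $a<c<d<b$. One direction is immediate: $P_1$ itself contains a nesting (its second arc nests over the third), and more to the point, a single nesting pair is already an occurrence of a sub-configuration of $P_1$ — so I should double-check the precise shape of $P_1$ and argue that any nesting of two arcs extends, within a Stoimenow matching, to a full copy of $P_1$. This is the crux of the ``only if'' (equivalently ``nonnesting $\Rightarrow$ avoids $P_1$'') direction, but it is really the ``contains a nesting $\Rightarrow$ contains $P_1$'' direction that needs work, so let me set it up carefully below.

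First I would recall that $P_1$ consists of four arcs which, reading openers/closers left to right, look like: an arc $[1,4]$ (positions relative to the four arcs), an arc $[2,8]$ nesting over everything to its right, an arc $[3,6]$, and an arc $[5,7]$ — in other words a nesting pair (arcs $2$ and $3$, or arcs $2$ and $4$) decorated by extra arcs on the left and inside. So to prove ``avoids $P_1$ $\Rightarrow$ nonnesting'' I argue the contrapositive: suppose $M\in\M_n$ has a nesting $[a,b]\supset[c,d]$ with $a<c<d<b$. Because $M$ is a Stoimenow matching, it forbids Type~I and Type~II arcs; I want to use these forbidden local configurations to \emph{force} the existence of the two additional arcs required to complete a copy of $P_1$. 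Concretely, the arc $[a,b]$ together with $[c,d]$ being nested but not forming a Type~I configuration forces the positions $a+1$ and $c-1$ (or the relevant neighbors) to be matched elsewhere, producing the ``outer left'' arc and the ``inner'' arc of $P_1$; I would make this precise by a short case analysis on which endpoints are adjacent. The key obstacle is exactly this step: verifying that the Stoimenow (Type~I/II-free) condition always supplies enough ``room'' and enough extra arcs to upgrade an arbitrary nesting into the specific four-arc pattern $P_1$, rather than some smaller or differently-shaped configuration. I expect this to require carefully choosing \emph{which} nesting to start from — e.g.\ an innermost or outermost nesting, or one with $c=a+1$ or $d=b-1$ — so that the forbidden-arc conditions pin down the companions unambiguously.

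The reverse direction, ``nonnesting $\Rightarrow$ avoids $P_1$,'' is the easy half: $P_1$ visibly contains a nesting pair of arcs, so any matching containing $P_1$ contains a nesting, hence a matching with no nesting cannot contain $P_1$. Combining the two directions gives $\M_n(P_1)=\{\text{nonnesting matchings on }\{1,\dots,2n\}\}$ as sets. Finally, since nonnesting matchings on $2n$ points are well known to be counted by the Catalan number $C_n$ (Stanley, Exercise~6.19(o) via, e.g., the bijection to Dyck paths recording openers as up-steps and closers as down-steps, which is a nonnesting matching precisely when the down-steps are matched in FIFO order), we conclude $|\M_n(P_1)|=C_n$. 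I would present the Catalan count by citing \cite{Stanley} rather than reproving it, and devote the bulk of the write-up to the forced-companion argument for ``nesting $\Rightarrow P_1$ inside a Stoimenow matching.''
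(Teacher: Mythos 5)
Your overall strategy is the same as the paper's: the easy direction is that $P_1$ contains a nesting pair, and the real content is showing that any nesting in a Stoimenow matching can be completed to an occurrence of $P_1$; the Catalan count then follows from the classical enumeration of nonnesting matchings. However, you have left precisely that real content unproven: you describe it as ``the key obstacle'' to be handled by ``a short case analysis'' and by ``carefully choosing which nesting to start from,'' without actually making the choice or doing the analysis. The paper's proof supplies the missing ingredients concretely. One takes a nesting $[a,b]\supset[c,d]$ with $a$ \emph{maximal} and then $c$ \emph{minimal}; with this choice, position $a+1$ must be a closer (if it were an opener, its arc would either be nested in $[a,b]$ with $c$ not minimal/Type~I, or would nest over $[c,d]$ with $a$ not maximal), which produces the left companion arc $[a^-,a^+]$ with $a^-<a<a^+<c$. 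Then one shows there must be an opener $b^-$ with $d<b^-<b$ (otherwise the arc closing at $b-1$ forces a Type~II configuration or contradicts maximality of $a$), and that the arc $[b^-,x]$ with rightmost such closer satisfies $x>b$ (else Type~II), producing the right companion. Without these two deductions the lemma is not proved, so as written your argument has a genuine gap rather than just an omitted routine verification.

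A secondary but non-trivial issue: your tentative description of $P_1$ is wrong. Reading the figure, $P_1$ consists of the arcs $[1,3]$, $[2,7]$, $[4,5]$, $[6,8]$: a nesting $[2,7]\supset[4,5]$ flanked by an arc crossing $[2,7]$ from the left and an arc crossing it out to the right. Your version $[1,4],[2,8],[3,6],[5,7]$ has the fourth arc nested inside the second rather than crossing out of it, and a case analysis built on that shape would not prove the statement. The remaining parts of your write-up (the converse direction and citing Stanley for $|\mathrm{NN}_n|=C_n$) are correct and match the paper.
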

	\begin{proof}
		Suppose a matching $M \in \mathcal{M}_n$ contains a pair of nesting arcs $[a,b]$ and $[c,d]$ such that $a < c < d < b$. 
		We choose such an arc $[a,b]$ with the maximal possible $a$ among all such arcs $[a,b]$ and  the minimal possible $c$ among all such arcs $[c,d]$. 
		Since $M$ avoids Type I arcs, $M$ must have an arc $[a^-,a^+]$ such that $a^-<a<a^+<c$. 
		
		We shall prove that $M$ contains an occurrence of  pattern $P_1$. 
		Assume that all elements between $d$ and $b$ are closers.
		Indeed, starting with $b-1$ and $b$, if no Type II configuration occurs there, then we consider $b-2$ and $b-1$; if none occurs there either, then we consider $b-3$ and $b-2$; and so on, eventually reaching $d$ and $d+1$.
		Thus, there must be at least one opener between $d$ and $b$, and we let $[b^-, x]$ be the arc satisfying $d < b^- < b$ with $x$ being the rightmost possible closer. Then $x > b$ must hold; otherwise, the arcs $[a,b]$ and $[b^-,x]$ would form a Type II configuration. 
		But then the matching induced by the arcs $[a^-,a^+]$, $[a,b]$, $[c,d]$, and $[b^-,x]$ forms an occurrence of the pattern $P_1$.
		
		Conversely, a matching $M \in \M_n$ containing an occurrence of pattern $P_1$ must have nesting arcs. This completes the proof.\end{proof}
	
	\subsection{Pattern $P_2$}\label{P2-sec}
	Following~\cite{Claesson-Dukes-Kitaev}, the \textit{reduction arc} of a matching $M \in \mathcal{M}_n$, denoted $\red(M)$, is the arc whose closer is located immediately to the right of the final opener in $M$. For instance, let $M_1 = \{[1,3], [2,4], [5,6], [7,8]\}$ and $M_2 = \{[1,2], [3,5], [4,7], [6,8]\}$, as depicted in the left and right parts of the diagram below, respectively. Then, $\red(M_1) = [7,8]$ and $\red(M_2) = [4,7]$.\\[-0.7cm]

	\begin{center}
		\begin{tikzpicture}[scale = 0.4]
			\draw (-0.6,0) -- (7.6,0);
			\foreach \x in {0,1,2,3,4,5,6,7}
			{
				\filldraw (\x,0) circle (2pt);
			}
			\draw[black] (0,0) arc (180:0:1 and 1);
			\draw[black] (1,0) arc (180:0:1 and 1);
			\draw[black] (4,0) arc (180:0:0.5 and 1);
			\draw[black] (6,0) arc (180:0:0.5 and 1);
			\node[anchor=north west] at (-1.4, 1.4) {\scriptsize{$M_1$}};
			\node[anchor=north west] at (-0.6, -0.05) {\scriptsize{1}};
			\node[anchor=north west] at (0.4, -0.05) {\scriptsize{2}};
			\node[anchor=north west] at (1.4, -0.05) {\scriptsize{3}};
			\node[anchor=north west] at (2.4, -0.05) {\scriptsize{4}};
			\node[anchor=north west] at (3.4, -0.05) {\scriptsize{5}};
			\node[anchor=north west] at (4.4, -0.05) {\scriptsize{6}};
			\node[anchor=north west] at (5.4, -0.05) {\scriptsize{7}};
			\node[anchor=north west] at (6.4, -0.05) {\scriptsize{8}};
		\end{tikzpicture}
		\hspace{0.5cm}
		\begin{tikzpicture}[scale = 0.4]
			\draw (-0.6,0) -- (7.6,0);
			\foreach \x in {0,1,2,3,4,5,6,7}
			{
				\filldraw (\x,0) circle (2pt);
			}
			\draw[black] (0,0) arc (180:0:0.5 and 1);
			\draw[black] (2,0) arc (180:0:1 and 1);
			\draw[black] (3,0) arc (180:0:1.5 and 1);
			\draw[black] (5,0) arc (180:0:1 and 1);
			\node[anchor=north west] at (-1.5, 1.4) {\scriptsize{$M_2$}};
			\node[anchor=north west] at (-0.6, -0.05) {\scriptsize{1}};
			\node[anchor=north west] at (0.4, -0.05) {\scriptsize{2}};
			\node[anchor=north west] at (1.4, -0.05) {\scriptsize{3}};
			\node[anchor=north west] at (2.4, -0.05) {\scriptsize{4}};
			\node[anchor=north west] at (3.4, -0.05) {\scriptsize{5}};
			\node[anchor=north west] at (4.4, -0.05) {\scriptsize{6}};
			\node[anchor=north west] at (5.4, -0.05) {\scriptsize{7}};
			\node[anchor=north west] at (6.4, -0.05) {\scriptsize{8}};
		\end{tikzpicture}
	\end{center}
	\vspace{-0.5cm}
	
	\begin{lem}\label{lem-P2-catalan}
		We have $|\M_n(P_2)|=C_n$.
	\end{lem}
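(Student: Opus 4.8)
The goal is to show $|\M_n(P_2)| = C_n$, so the natural strategy is to set up a recursive decomposition of $P_2$-avoiding Stoimenow matchings that mirrors the Catalan recurrence~\eqref{Catalan-rec}, $C_n = \sum_{k=1}^n C_{k-1}C_{n-k}$. The reduction arc $\red(M)$ introduced at the start of Section~\ref{P2-sec} is clearly meant to play the role of the "splitting" arc in this decomposition, exactly as in~\cite{Claesson-Dukes-Kitaev}. So the plan is: (i) understand where $\red(M)$ sits inside a $P_2$-avoiding matching; (ii) show that avoiding $P_2$ forces a rigid structure on the arcs relative to $\red(M)$ — essentially that everything nested under $\red(M)$ and everything to its right must each independently be a smaller $P_2$-avoiding Stoimenow matching, with the number of arcs on the two sides being $k-1$ and $n-k$ for the appropriate $k$; and (iii) check the base case $n=0$ ($C_0=1$) and conclude by induction.

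Concretely, I would first analyze the position of $\red(M) = [a_n, c]$ where $a_n$ is the last opener and $c$ is the closer immediately following it. The interval $(a_n, c)$ contains only closers (by definition of $\red$), and these closers are matched to openers occurring before $a_n$. The key structural claim to prove is that in a $P_2$-avoiding matching, $c$ must in fact equal $2n$ is too strong; rather, one should show that the arc $\red(M)$ together with the arcs entirely to its left that "cross over" $c$ is severely constrained — in fact $P_2$, being the configuration of a $2$-chain followed by a nested arc followed by a disjoint arc (reading its picture: arcs $[1,3]$ opener-closer pattern at positions making a chain, then a crossing/nesting, then a separated arc), forbids having, to the left of $\red(M)$, a chain of length $2$ whose structure interacts with a later separated arc. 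I would identify precisely which arcs can appear to the left of $a_n$, to the right of $c$, and straddling $[a_n,c]$, using both the Type~I/Type~II constraints and $P_2$-avoidance, and argue that the matching splits as (block determined by $\red(M)$ and what it nests) $\sqcup$ (the rest), each piece being an arbitrary smaller element of $\M(P_2)$; the index $k$ records the size of the first piece, giving $\sum_{k=1}^n |\M_{k-1}(P_2)|\,|\M_{n-k}(P_2)|$.

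The main obstacle — and where the real work lies — is step (ii): proving that $P_2$-avoidance is exactly strong enough to collapse the potentially complicated ways arcs can straddle or nest around the reduction arc into the clean two-block product, without over-restricting. One has to be careful that the decomposition is a genuine bijection: every pair of smaller $P_2$-avoiding Stoimenow matchings, reassembled in the prescribed way, must yield a $P_2$-avoiding Stoimenow matching (so no new $P_2$, Type~I, or Type~II occurrences are created at the "seam"), and conversely every such matching decomposes uniquely. Verifying that reassembly creates no forbidden configuration will require a short case analysis on which three or four arcs could form a $P_2$ spanning the seam. I expect the Type~I-avoidance hypothesis (forcing an arc $[a^-,a^+]$ to the left, as exploited in the proof of Lemma~\ref{lem-Nonnesting}) to again be the tool that rules out the awkward cases. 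Once the recursion $|\M_n(P_2)| = \sum_{k=1}^n |\M_{k-1}(P_2)|\,|\M_{n-k}(P_2)|$ with $|\M_0(P_2)|=1$ is established, the conclusion $|\M_n(P_2)| = C_n$ is immediate from~\eqref{Catalan-rec} and induction.
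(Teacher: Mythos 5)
Your overall strategy --- establishing the Catalan recurrence \eqref{Catalan-rec} for $|\M_n(P_2)|$ by a split/reassemble argument organized around a reduction arc --- is the same as the paper's, but two genuine gaps remain. First, the decomposition you describe is not the right one, and as stated it does not produce two independent smaller matchings. The reduction arc $\red(M)$ is the arc whose \emph{closer} sits immediately to the right of the final opener; its opener is in general \emph{not} the last opener $a_n$ (e.g.\ $\red(M_2)=[4,7]$ for $M_2=\{[1,2],[3,5],[4,7],[6,8]\}$), and everything to the right of that closer consists of closers only, so ``the part to the right of $\red(M)$'' is not a submatching. The clean product split comes instead from the block structure: one writes $M=\widetilde{M}\oplus M''$ where $\widetilde{M}$ is the \emph{first irreducible block} and $M''$ the remaining blocks, and then deletes the reduction arc of $\widetilde{M}$ (not of $M$) to obtain $M'\in\M_{k-1}(P_2)$; the factor $C_{k-1}$ comes from the resulting bijection $\M_{k-1}(P_2)\to\M^I_k(P_2)$.

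Second, you explicitly defer the step that carries all the content: specifying \emph{where} the opener of the new reduction arc must be inserted when gluing $M'$ back into an irreducible block, and checking that this insertion creates no Type~I, Type~II, or $P_2$ configuration and is uniquely reversible. This is not the ``short case analysis at the seam'' you anticipate. The construction genuinely splits into two cases (Figures~\ref{fig-glue-1} and~\ref{fig-glue-2}): if the first and last arcs of $M'$ overlap, then $M'$ is forced to be a $(k-1)$-crossing and the new opener must go at the very front; otherwise the new opener must be placed immediately before the contiguous run $X$ of openers below $b_1$ whose closers lie beyond $a_{k-1}$ (or immediately before $b_1$ if $X=\emptyset$). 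Other placements either create a forbidden configuration or break injectivity of the gluing. Until you supply this insertion rule and verify both directions, the recurrence $|\M_n(P_2)|=\sum_{k=1}^{n}|\M_{k-1}(P_2)|\,|\M_{n-k}(P_2)|$ is asserted rather than proved.
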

	\begin{proof}
		To prove that $|\M_n(P_2)|=C_n$, since $|\M_0(P_2)|=C_0=1$, it suffices to show that $|\M_n(P_2)|$ satisfies the same recurrence relation as $C_n$, namely,
		\begin{align}\label{P2-recurr}
			|\M_{n}(P_2)|=\sum_{k=1}^{n}|\M_{k-1}(P_2)| \cdot  |\M_{n-k}(P_2)|.
		\end{align}
		We next show how to generate all \( P_2 \)-avoiding Stoimenow matchings of length \( n \). Given any pair \( M' =\{[1, b_1], [a_2, b_2],\ldots,[a_{k-1},2k-2]\}\in \mathcal{M}_{k-1}(P_2) \) and \( M'' \in \mathcal{M}_{n-k}(P_2) \), where \( 1 \leq k \leq n \), we introduce a \textit{gluing procedure} that combines \( M' \) and \( M'' \) into a matching \( M \in \mathcal{M}_{n}(P_2) \), and a \textit{splitting procedure} that recovers \( M' \) and \( M'' \) from \( M \), thereby proving~\eqref{P2-recurr}.
		
		{\bf Gluing procedure.} We first obtain a matching $\widetilde{M}$ from $M'$ as follows.  
		If $M' = \emptyset$, then let $\widetilde{M}$ be the single-arc matching $[1,2]$.  
		Otherwise, we construct $\widetilde{M}$ by adding a new reduction arc $[a,b]$ to $M'$, where $b$ is inserted immediately after $a_{k-1}$.  
		We divide the proof into two cases, according to the position of the opener~$a$:\\[-4mm]
		
		\noindent
		\textbf{Case 1. }
		If the arcs $[1,b_1]$ and $[a_{k-1},2k-2]$ in $M'$ overlap (i.e., $a_{k-1} < b_1$), then to avoid Type~I and Type II arcs, $M'$ must be a $(k-1)$-crossing. We  place the element \( a \) immediately to the left of the first opener in \( M' \) to obtain \( \widetilde{M} \) (making $a=1$ in this case), as shown in Figure~\ref{fig-glue-1}.
		
		\begin{figure}[h]
			\centering
			\begin{tikzpicture}[scale = 0.4]
				\draw (-0.6,0) -- (1,0);
				\draw[dashed] (1,0) -- (3,0);
				\draw (3,0) -- (5,0);
				\draw[dashed] (5,0) -- (7.6,0);
				\foreach \x in {0,1,3,4,5,7} {
					\filldraw[black] (\x,0) circle (2pt);
				}
				
				\draw[black] (0,0) arc (180:0:2 and 1);
				\draw[black] (1,0) arc (180:0:2 and 1 );
				\draw[black] (3,0) arc (180:0:2 and 1 );
				\node[anchor=north west] at (-1, 1.8) {\scriptsize{$M'$}};
				\node[anchor=north west] at (-0.6, -0.05) {\scriptsize{1}};
				\node[anchor=north west] at (2.2, -0.05) {\scriptsize{$a_{k-1}$}};
				\node[anchor=north west] at (6, -0.05) {\scriptsize{$2k-2$}};
				\node[anchor=north west] at (1.5, 0.8) {\scriptsize{$\cdots$}};
			\end{tikzpicture}
			\hspace{0.1cm}
			\raisebox{3ex}{$\xrightarrow{\hspace*{0.2cm}}$}  
			\hspace{0.1cm}
			\begin{tikzpicture}[scale = 0.4]
				\draw (-0.6,0) -- (2,0);
				\draw[dashed] (2,0) -- (4,0);
				\draw (4,0) -- (7,0);
				\draw[dashed] (7,0) -- (9.6,0);
				\foreach \x in {1,2,4,6,7,9} {
					\filldraw[black] (\x,0) circle (2pt);
				}
				\draw[thick] (0,0) circle (5pt);
				\draw[thick] (0,0) circle (1pt);
				\draw[thick] (5,0) circle (5pt);
				\draw[thick] (5,0) circle (1pt);
				\draw[black, very thick] (0,0) arc (180:0:2.5 and 1.3);
				\draw[black] (4,0) arc (180:0:2.5 and 1);
				\draw[black] (1,0) arc (180:0:2.5 and 1 );
				\draw[black] (2,0) arc (180:0:2.5 and 1 );
				\node[anchor=north west] at (-1, 1.8) {\scriptsize $\widetilde{M}$};
				\node[anchor=north west] at (-0.6, -0.05) {\scriptsize{$a$}};
				\node[anchor=north west] at (0.5, -0.05) {\scriptsize{$2$}};
				\node[anchor=north west] at (2.8, -0.05) {\scriptsize{$a'_{k-1}$}};
				\node[anchor=north west] at (4.6, -0.05) {\scriptsize{$b$}};
				\node[anchor=north west] at (8.1, -0.05) {\scriptsize{$2k$}};
				\node[anchor=north west] at (2.5, 0.8) {\scriptsize{$\cdots$}};
			\end{tikzpicture}
			\caption{Method for obtaining $\widetilde{M}$ from $M'$ in Case~1.}
			\label{fig-glue-1}
		\end{figure}
		
		\noindent
		\textbf{Case 2. }
		If the arcs $[1,b_1]$ and $[a_{k-1},2k-2]$ do not overlap in $M'$, we let \( a_i, a_{i+1}, \ldots, b_1 - 1 \) be all openers less than \( b_1 \) whose corresponding closers are greater than \( a_{k-1} \). These openers, if any, form a contiguous run of openers \( X \) in order to avoid Type~I and Type~II arcs. We place the element \( a \) immediately before \( X \) (resp., \( b_1 \)) if \( X \) is nonempty (resp., empty) to obtain $\widetilde{M}$; see Figure~\ref{fig-glue-2} for an illustration. 
		
		\begin{figure}[h]
			\centering
			\begin{tikzpicture}[scale = 0.3] 
				\draw (-0.4,0) -- (1,0);
				\draw [dashed](1,0) -- (3,0);
				\draw (3,0) -- (4,0);
				\draw[dashed] (4,0) -- (6,0);
				\draw (6,0) -- (8,0);
				\draw[dashed] (8,0) -- (11,0);
				\draw(11,0) -- (12,0);
				\draw[dashed] (12,0) -- (14,0);
				\draw (14,0) -- (15,0);
				\draw[dashed] (15,0) -- (17,0);
				\draw (17,0) -- (18,0);
				\draw[dashed] (18,0) -- (20.4,0);
				\foreach \x in {0,1,3,4,6,7,8,11,12,14,15,17,18,20}
				{
					\filldraw (\x,0) circle (2pt);
				}
				\draw[black] (0,0) arc (180:0:3.5 and 1.5);
				\draw[black] (1,0) arc (180:0:3.5 and 1.5);
				\draw[black] (3,0) arc (180:0:4 and 1.5);
				\draw[black] (4,0) arc (180:0:5.5 and 1.5);
				\draw[black] (6,0) arc (180:0:5.5 and 1.5);
				\draw[black] (12,0) arc (180:0:3 and 1.5);
				\draw[black] (14,0) arc (180:0:3 and 1.5);
				\node[anchor=north west] at (-0.3, 2.8) {\scriptsize $M'$};
				\node[anchor=north west][rotate=90] at (3.3, -1.4) {$\Big{\{}$};
				\node[anchor=north west] at (4.2, -0.5) {\scriptsize{$X$}};
				\node[anchor=north west] at (-0.7, -0.05) {\scriptsize{1}};
				\node[anchor=north west] at (6.4, -0.05) {\scriptsize{$b_1$}};
				\node[anchor=north west] at (11.3, -0.05) {\scriptsize{$a_j$}};
				\node[anchor=north west] at (13, -0.05) {\scriptsize{$a_{k-1}$}};
				\node[anchor=north west] at (17.3, -0.05) {\scriptsize{$b_j$}};
				\node[anchor=north west] at (18.7, -0.05) {\scriptsize{$2k-2$}};
				\node[anchor=north west] at (8, 1.5) {{$\cdots$}};
			\end{tikzpicture}
			\hspace{0.1cm}
			\raisebox{4ex}{$\xrightarrow{\hspace*{0.2cm}}$}  
			\hspace{0.1cm}
			\begin{tikzpicture}[scale = 0.3] 
				\draw (-0.4,0) -- (1,0);
				\draw [dashed](1,0) -- (3,0);
				\draw (3,0) -- (5,0);
				\draw[dashed] (5,0) -- (7,0);
				\draw (7,0) -- (9,0);
				\draw[dashed] (9,0) -- (12,0);
				\draw(12,0) -- (13,0);
				\draw[dashed] (13,0) -- (15,0);
				\draw (15,0) -- (17,0);
				\draw[dashed] (17,0) -- (19,0);
				\draw (19,0) -- (20,0);
				\draw[dashed] (20,0) -- (22.4,0);
				\foreach \x in {0,1,3,5,7,8,9,12,13,15,17,19,20,22}
				{
					\filldraw (\x,0) circle (2pt);
				}
				\draw[thick] (4,0) circle (6pt);
				\draw[thick] (4,0) circle (1.5pt);
				\draw[thick] (16,0) circle (6pt);
				\draw[thick] (16,0) circle (1.5pt);
				\draw[black] (0,0) arc (180:0:4 and 1.5);
				\draw[black] (1,0) arc (180:0:4 and 1.5);
				\draw[black] (3,0) arc (180:0:4.5 and 1.5);
				\draw[black] (5,0) arc (180:0:6 and 1.5);
				\draw[black] (7,0) arc (180:0:6 and 1.5);
				\draw[black] (13,0) arc (180:0:3.5 and 1.5);
				\draw[black] (15,0) arc (180:0:3.5 and 1.5);
				\draw[black, very thick] (4,0) arc (180:0:6 and 2.3);
				\node[anchor=north west] at (-0.3, 2.8) {\scriptsize $\widetilde{M}$};
				\node[anchor=north west][rotate=90] at (4.3, -1.4) {$\Big{\{}$};
				\node[anchor=north west] at (5.2, -0.5) {\scriptsize{$X$}};
				\node[anchor=north west] at (-0.7, -0.05) {\scriptsize{1}};
				\node[anchor=north west] at (7.4, -0.05) {\scriptsize{$b'_1$}};
				\node[anchor=north west] at (12.1, -0.05) {\scriptsize{$a'_j$}};
				\node[anchor=north west] at (13.4, -0.05) {\scriptsize{$a'_{k-1}$}};
				\node[anchor=north west] at (19.3, -0.05) {\scriptsize{$b'_j$}};
				\node[anchor=north west] at (20.9, -0.05) {\scriptsize{$2k$}};
				\node[anchor=north west] at (3.3, -0.05) {\scriptsize{$a$}};
				\node[anchor=north west] at (15.7, -0.05) {\scriptsize{$b$}};
				\node[anchor=north west] at (9, 1.5) {{$\cdots$}};
			\end{tikzpicture}
			\caption{Method for obtaining $\widetilde{M}$ from $M'$ in Case~2.}
			\label{fig-glue-2}
		\end{figure}
		
		It is clear that $\widetilde{M} \in \M_{k}(P_2)$. To glue $M'$ and $M''$, we give the following definition. The \textit{merge} of two matchings $M_1 \in \M_k$ and $M_2 \in \M_\ell$, denoted \( M_1 \oplus M_2 \), is a matching obtained by merging \( M_1 \) and \( M_2 \) after increasing each element in \( M_2 \) by \( 2k \). For example, if \( M_1 = \{[1,3], [2,4]\} \) and \( M_2 = \{[1,2], [3,4]\} \), then \( M_1 \oplus M_2 = \{[1,3], [2,4], [5,6], [7,8]\} \).
		
		Finally, we let \( M = \widetilde{M} \oplus M''\). It is clear that $M \in \M_{n}(P_2)$ and that it is unique for the fixed $M'$ and $M''$. 
		We illustrate this procedure with two examples in Figure~\ref{fig-exa-glue}.
		
		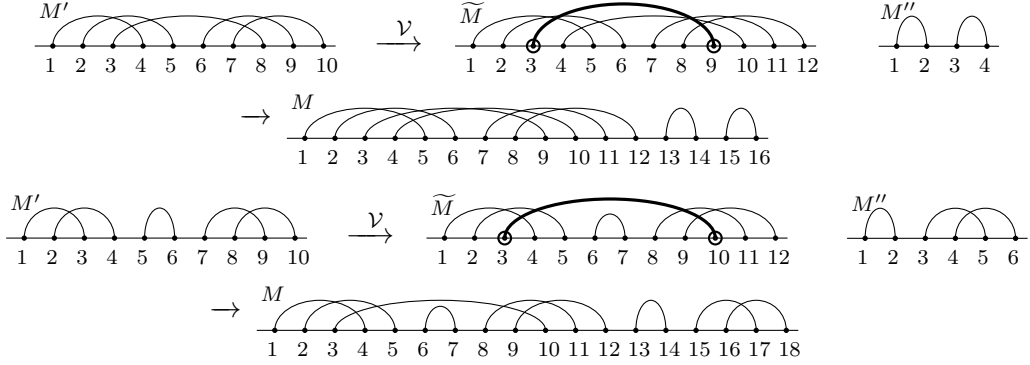
\begin{figure}[h]
			\centering
			\begin{tikzpicture}[scale = 0.4] 
				\draw (-0.6,0) -- (9.4,0);
				\foreach \x in {0,1,2,3,4,5,6,7,8,9}
				{
					\filldraw (\x,0) circle (2pt);
				}
				\draw[black] (0,0) arc (180:0:1.5 and 1);
				\draw[black] (1,0) arc (180:0:1.5 and 1);
				\draw[black] (2,0) arc (180:0:2.5 and 1);
				\draw[black] (5,0) arc (180:0:1.5 and 1);
				\draw[black] (6,0) arc (180:0:1.5 and 1);
				
				\node[anchor=north west] at (-0.8, 1.8) {\scriptsize $M'$};
				\node[anchor=north west] at (-0.6, -0.05) {\scriptsize{1}};
				\node[anchor=north west] at (0.4, -0.05) {\scriptsize{2}};
				\node[anchor=north west] at (1.4, -0.05) {\scriptsize{3}};
				\node[anchor=north west] at (2.4, -0.05) {\scriptsize{4}};
				\node[anchor=north west] at (3.4, -0.05) {\scriptsize{5}};
				\node[anchor=north west] at (4.4, -0.05) {\scriptsize{6}};
				\node[anchor=north west] at (5.4, -0.05) {\scriptsize{7}};
				\node[anchor=north west] at (6.4, -0.05) {\scriptsize{8}};
				\node[anchor=north west] at (7.4, -0.05) {\scriptsize{9}};
				\node[anchor=north west] at (8.4, -0.05) {\scriptsize{10}};
			\end{tikzpicture}
			\hspace{0.1cm}
			\raisebox{2.5ex}{$\xrightarrow{\hspace*{0.2cm} \V}$}
			\hspace{0.1cm}
			\begin{tikzpicture}[scale = 0.4] 
				\draw (-0.6,0) -- (11.4,0);
				\foreach \x in {0,1,3,4,5,6,7,9,10,11}
				{
					\filldraw (\x,0) circle (2pt);
				}
				\draw[thick] (2,0) circle (6pt);
				\draw[thick] (2,0) circle (1.5pt);
				\draw[thick] (8,0) circle (6pt);
				\draw[thick] (8,0) circle (1.5pt);
				\draw[black] (0,0) arc (180:0:2 and 1);
				\draw[black] (1,0) arc (180:0:2 and 1);
				\draw[black, very thick] (2,0) arc (180:0:3 and 1.4);
				\draw[black] (3,0) arc (180:0:3 and 1);
				\draw[black] (6,0) arc (180:0:2 and 1);
				\draw[black] (7,0) arc (180:0:2 and 1);  
				
				\node[anchor=north west] at (-0.8,1.8) {\scriptsize $\widetilde{M}$};        
				\node[anchor=north west] at (-0.6, -0.05) {\scriptsize{1}};
				\node[anchor=north west] at (0.4, -0.05) {\scriptsize{2}};
				\node[anchor=north west] at (1.4, -0.05) {\scriptsize{3}};
				\node[anchor=north west] at (2.4, -0.05) {\scriptsize{4}};
				\node[anchor=north west] at (3.4, -0.05) {\scriptsize{5}};
				\node[anchor=north west] at (4.4, -0.05) {\scriptsize{6}};
				\node[anchor=north west] at (5.4, -0.05) {\scriptsize{7}};
				\node[anchor=north west] at (6.4, -0.05) {\scriptsize{8}};
				\node[anchor=north west] at (7.4, -0.05) {\scriptsize{9}};
				\node[anchor=north west] at (8.4, -0.05) {\scriptsize{10}};
				\node[anchor=north west] at (9.4, -0.05) {\scriptsize{11}};
				\node[anchor=north west] at (10.4, -0.05) {\scriptsize{12}};
			\end{tikzpicture}
			\hspace{0.3cm}
			\begin{tikzpicture}[scale = 0.4] 
				\draw (-0.6,0) -- (3.4,0);
				\foreach \x in {0,1,2,3}
				{
					\filldraw (\x,0) circle (2pt);
				}
				\draw[black] (0,0) arc (180:0:0.5 and 1);
				\draw[black] (2,0) arc (180:0:0.5 and 1);
				
				\node[anchor=north west] at (-0.9, 1.8) {\scriptsize $M''$};
				\node[anchor=north west] at (-0.6, -0.05) {\scriptsize{1}};
				\node[anchor=north west] at (0.4, -0.05) {\scriptsize{2}};
				\node[anchor=north west] at (1.4, -0.05) {\scriptsize{3}};
				\node[anchor=north west] at (2.4, -0.05) {\scriptsize{4}};
			\end{tikzpicture}
			\\
			\raisebox{4ex}{$\xrightarrow{\hspace*{0.2cm}}$}  
			\vspace{0.4cm}
			\begin{tikzpicture}[scale = 0.4, shift={(0,1)}] 
				\draw (-0.6,0) -- (15.4,0);
				\foreach \x in {0,1,2,3,4,5,6,7,8,9,10,11,12,13,14,15}
				{
					\filldraw (\x,0) circle (2pt);
				}
				\draw[black] (0,0) arc (180:0:2 and 1);
				\draw[black] (1,0) arc (180:0:2 and 1);
				\draw[black] (2,0) arc (180:0:3 and 1);
				\draw[black] (3,0) arc (180:0:3 and 1);
				\draw[black] (6,0) arc (180:0:2 and 1);
				\draw[black] (7,0) arc (180:0:2 and 1);
				\draw[black] (12,0) arc (180:0:0.5 and 1);
				\draw[black] (14,0) arc (180:0:0.5 and 1);
				
				\node[anchor=north west] at (-0.8,1.8) {\scriptsize $M$};
				\node[anchor=north west] at (-0.6, -0.05) {\scriptsize{1}};
				\node[anchor=north west] at (0.4, -0.05) {\scriptsize{2}};
				\node[anchor=north west] at (1.4, -0.05) {\scriptsize{3}};
				\node[anchor=north west] at (2.4, -0.05) {\scriptsize{4}};
				\node[anchor=north west] at (3.4, -0.05) {\scriptsize{5}};
				\node[anchor=north west] at (4.4, -0.05) {\scriptsize{6}};
				\node[anchor=north west] at (5.4, -0.05) {\scriptsize{7}};
				\node[anchor=north west] at (6.4, -0.05) {\scriptsize{8}};
				\node[anchor=north west] at (7.4, -0.05) {\scriptsize{9}};
				\node[anchor=north west] at (8.4, -0.05) {\scriptsize{10}};
				\node[anchor=north west] at (9.4, -0.05) {\scriptsize{11}};
				\node[anchor=north west] at (10.4, -0.05) {\scriptsize{12}};
				\node[anchor=north west] at (11.4, -0.05) {\scriptsize{13}};
				\node[anchor=north west] at (12.4, -0.05) {\scriptsize{14}};
				\node[anchor=north west] at (13.4, -0.05) {\scriptsize{15}};
				\node[anchor=north west] at (14.4, -0.05) {\scriptsize{16}};
			\end{tikzpicture}
			\vspace{-0.3cm}
			
			\centering
			\begin{tikzpicture}[scale = 0.4] 
				\draw (-0.6,0) -- (9.4,0);
				\foreach \x in {0,1,2,3,4,5,6,7,8,9}
				{
					\filldraw (\x,0) circle (2pt);
				}
				\draw[black] (0,0) arc (180:0:1 and 1);
				\draw[black] (1,0) arc (180:0:1 and 1);
				\draw[black] (4,0) arc (180:0:0.5 and 1);
				\draw[black] (6,0) arc (180:0:1 and 1);
				\draw[black] (7,0) arc (180:0:1 and 1);
				
				\node[anchor=north west] at (-0.8, 1.8) {\scriptsize $M'$};
				\node[anchor=north west] at (-0.6, -0.05) {\scriptsize{1}};
				\node[anchor=north west] at (0.4, -0.05) {\scriptsize{2}};
				\node[anchor=north west] at (1.4, -0.05) {\scriptsize{3}};
				\node[anchor=north west] at (2.4, -0.05) {\scriptsize{4}};
				\node[anchor=north west] at (3.4, -0.05) {\scriptsize{5}};
				\node[anchor=north west] at (4.4, -0.05) {\scriptsize{6}};
				\node[anchor=north west] at (5.4, -0.05) {\scriptsize{7}};
				\node[anchor=north west] at (6.4, -0.05) {\scriptsize{8}};
				\node[anchor=north west] at (7.4, -0.05) {\scriptsize{9}};
				\node[anchor=north west] at (8.4, -0.05) {\scriptsize{10}};
			\end{tikzpicture}
			\hspace{0.1cm}
			\raisebox{2.5ex}{$\xrightarrow{\hspace*{0.2cm} \V}$}
			\hspace{0.1cm}
			\begin{tikzpicture}[scale = 0.4] 
				\draw (-0.6,0) -- (11.4,0);
				\foreach \x in {0,1,3,4,5,6,7,8,10,11}
				{
					\filldraw (\x,0) circle (2pt);
				}
				\draw[thick] (2,0) circle (6pt);
				\draw[thick] (2,0) circle (1.5pt);
				\draw[thick] (9,0) circle (6pt);
				\draw[thick] (9,0) circle (1.5pt);
				\draw[black] (0,0) arc (180:0:1.5 and 1);
				\draw[black] (1,0) arc (180:0:1.5 and 1);
				\draw[black,  very thick] (2,0) arc (180:0:3.5 and 1.3);
				\draw[black] (5,0) arc (180:0:0.5 and 0.8);
				\draw[black] (7,0) arc (180:0:1.5 and 1);
				\draw[black] (8,0) arc (180:0:1.5 and 1);  
				
				\node[anchor=north west] at (-0.8,1.8) {\scriptsize $\widetilde{M}$};        
				\node[anchor=north west] at (-0.6, -0.05) {\scriptsize{1}};
				\node[anchor=north west] at (0.4, -0.05) {\scriptsize{2}};
				\node[anchor=north west] at (1.4, -0.05) {\scriptsize{3}};
				\node[anchor=north west] at (2.4, -0.05) {\scriptsize{4}};
				\node[anchor=north west] at (3.4, -0.05) {\scriptsize{5}};
				\node[anchor=north west] at (4.4, -0.05) {\scriptsize{6}};
				\node[anchor=north west] at (5.4, -0.05) {\scriptsize{7}};
				\node[anchor=north west] at (6.4, -0.05) {\scriptsize{8}};
				\node[anchor=north west] at (7.4, -0.05) {\scriptsize{9}};
				\node[anchor=north west] at (8.4, -0.05) {\scriptsize{10}};
				\node[anchor=north west] at (9.4, -0.05) {\scriptsize{11}};
				\node[anchor=north west] at (10.4, -0.05) {\scriptsize{12}};
			\end{tikzpicture}
			\hspace{0.3cm}
			\begin{tikzpicture}[scale = 0.4] 
				\draw (-0.6,0) -- (5.4,0);
				\foreach \x in {0,1,2,3,4,5}
				{
					\filldraw (\x,0) circle (2pt);
				}
				\draw[black] (0,0) arc (180:0:0.5 and 1);
				\draw[black] (2,0) arc (180:0:1 and 1);
				\draw[black] (3,0) arc (180:0:1 and 1);
				
				\node[anchor=north west] at (-0.8, 1.8) {\scriptsize $M''$};
				\node[anchor=north west] at (-0.6, -0.05) {\scriptsize{1}};
				\node[anchor=north west] at (0.4, -0.05) {\scriptsize{2}};
				\node[anchor=north west] at (1.4, -0.05) {\scriptsize{3}};
				\node[anchor=north west] at (2.4, -0.05) {\scriptsize{4}};
				\node[anchor=north west] at (3.4, -0.05) {\scriptsize{5}};
				\node[anchor=north west] at (4.4, -0.05) {\scriptsize{6}};
			\end{tikzpicture}
			\\
			\raisebox{4ex}{$\xrightarrow{\hspace*{0.2cm}}$}  
			\vspace{0.4cm}
			\begin{tikzpicture}[scale = 0.4, shift={(0,1)}] 
				\draw (-0.6,0) -- (17.4,0);
				\foreach \x in {0,1,2,3,4,5,6,7,8,9,10,11,12,13,14,15,16,17}
				{
					\filldraw (\x,0) circle (2pt);
				}
				\draw[black] (0,0) arc  (180:0:1.5 and 1);
				\draw[black] (1,0) arc (180:0:1.5 and 1);
				\draw[black] (2,0) arc (180:0:3.5 and 1);
				\draw[black] (5,0) arc (180:0:0.5 and 0.8);
				\draw[black] (7,0) arc (180:0:1.5 and 1);
				\draw[black] (8,0) arc (180:0:1.5 and 1);
				\draw[black] (12,0) arc (180:0:0.5 and 1);
				\draw[black] (14,0) arc (180:0:1 and 1);
				\draw[black] (15,0) arc (180:0:1 and 1);
				
				\node[anchor=north west] at (-0.8,1.8) {\scriptsize $M$};
				\node[anchor=north west] at (-0.6, -0.05) {\scriptsize{1}};
				\node[anchor=north west] at (0.4, -0.05) {\scriptsize{2}};
				\node[anchor=north west] at (1.4, -0.05) {\scriptsize{3}};
				\node[anchor=north west] at (2.4, -0.05) {\scriptsize{4}};
				\node[anchor=north west] at (3.4, -0.05) {\scriptsize{5}};
				\node[anchor=north west] at (4.4, -0.05) {\scriptsize{6}};
				\node[anchor=north west] at (5.4, -0.05) {\scriptsize{7}};
				\node[anchor=north west] at (6.4, -0.05) {\scriptsize{8}};
				\node[anchor=north west] at (7.4, -0.05) {\scriptsize{9}};
				\node[anchor=north west] at (8.4, -0.05) {\scriptsize{10}};
				\node[anchor=north west] at (9.4, -0.05) {\scriptsize{11}};
				\node[anchor=north west] at (10.4, -0.05) {\scriptsize{12}};
				\node[anchor=north west] at (11.4, -0.05) {\scriptsize{13}};
				\node[anchor=north west] at (12.4, -0.05) {\scriptsize{14}};
				\node[anchor=north west] at (13.4, -0.05) {\scriptsize{15}};
				\node[anchor=north west] at (14.4, -0.05) {\scriptsize{16}};
				\node[anchor=north west] at (15.4, -0.05) {\scriptsize{17}};
				\node[anchor=north west] at (16.4, -0.05) {\scriptsize{18}};
			\end{tikzpicture}
			\vspace{-0.5cm}
			\caption{Two examples of gluing $P_2$-avoiding Stoimenow matchings $M'$ and $M''$ to form a $P_2$-avoiding Stoimenow matching $M$.}
			\label{fig-exa-glue}
		\end{figure}
		
		\textbf{Splitting procedure.} 
		For the matching \( M \), let \( \widetilde{M} \) denote its first irreducible block, and \( M'' \) the collection of the relabeled remaining arcs. We then obtain \( M' \) from \( \widetilde{M} \) by removing its reduction arc. The identification of this reduction arc is strictly unique due to the $P_2$-avoiding condition. If $\widetilde{M}$ contained multiple candidate reduction arcs that could be removed to yield a valid matching, their relative positions would inevitably form an occurrence of the pattern $P_2$, contradicting $M \in \M_n(P_2)$. This gives a uniquely determined decomposition of \( M \) into \( M' \) and \( M'' \).
		Clearly, the splitting procedure is the inverse of the gluing procedure (for an illustration just reverse the steps in Figure~\ref{fig-exa-glue}). This completes our proof of (\ref{P2-recurr}).\end{proof} 
	
	Let $\M^{Ir}_n$ denote the set of all irreducible matchings in $\mathcal{M}_n$.  
	Define a mapping $\mathcal{V}: \mathcal{M}_{n-1}(P_2) \to \M^{Ir}_n(P_2)$ by adding a reduction arc to each $M \in \mathcal{M}_{n-1}(P_2)$, as in the gluing procedure used to obtain $\widetilde{M}$ from $M'$ in the proof of Lemma~\ref{lem-P2-catalan}.  
	This mapping is a bijection, and, together with Lemma~\ref{lem-P2-catalan}, it yields the following corollary. Therefore, any $M\in\M_n(P_2)$ can be decomposed as $\V(M')\oplus M''$ for uniquely determined $M'\in\M_{k-1}(P_2)$ and $M'' \in \M_{n-k}(P_2)$.
	
	\begin{coro}
		\label{coro-P2-irre}
		We have $|\M^{Ir}_n(P_2)|=C_{n-1}$ for any $n\geq 1$.
	\end{coro}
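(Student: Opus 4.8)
The plan is to read off the corollary from the gluing/splitting bijection established in the proof of Lemma~\ref{lem-P2-catalan}, restricted to its ``irreducible slice''. That proof exhibits a bijection between $\M_n(P_2)$ and $\bigsqcup_{k=1}^{n}\M_{k-1}(P_2)\times\M_{n-k}(P_2)$: a matching $M$ is split into its first irreducible block $\widetilde M$ and the relabeled remainder $M''$, and then $M'$ is obtained from $\widetilde M$ by deleting its reduction arc. By construction $M$ is irreducible if and only if this first irreducible block is all of $M$, i.e. if and only if $M''$ is empty (the index $k=n$); in that case $M=\widetilde M=\V(M')$ with $M'\in\M_{n-1}(P_2)$. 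Thus $\V$ corestricts to a bijection $\M_{n-1}(P_2)\to\M^I_n(P_2)$, and $|\M^I_n(P_2)|=|\M_{n-1}(P_2)|$, which equals $C_{n-1}$ by Lemma~\ref{lem-P2-catalan}.

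If one prefers to argue directly, the three things to check are the following. First, $\V$ is well defined: $\widetilde M=\V(M')$ is a $P_2$-avoiding Stoimenow matching with $n$ arcs, as already noted in the proof of Lemma~\ref{lem-P2-catalan}, and it is irreducible — in Case~1 of the construction $\widetilde M$ is an $n$-crossing, which has no balanced proper prefix, while in Case~2 the new reduction arc $[a,b]$ has its opener to the left of $b_1$ and its closer immediately to the right of the last opener of $M'$, so every proper prefix of $\widetilde M$ contains strictly more openers than closers. Second, $\V$ is injective: the arc $[a,b]$ inserted by $\V$ is precisely $\red(\widetilde M)$, so $M'=\widetilde M\setminus\red(\widetilde M)$ is recovered from $\widetilde M$. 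Third, $\V$ is surjective: for $N\in\M^I_n(P_2)$, put $M'=N\setminus\red(N)$; deleting an arc preserves the Stoimenow property and $P_2$-avoidance, so $M'\in\M_{n-1}(P_2)$, and because $N$ is irreducible the arc $\red(N)$ sits in exactly the position dictated by Case~1 or Case~2, so $\V(M')=N$. Combining the three gives the bijection and hence the count.

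The one genuinely nonroutine point is the last step: one must verify that in an irreducible $P_2$-avoiding matching the reduction arc really occupies the canonical position — Case~1 when its opener overlaps the first arc, Case~2 otherwise — so that re-inserting it inverts arc removal. This is precisely the bookkeeping that makes the gluing and splitting procedures in the proof of Lemma~\ref{lem-P2-catalan} mutually inverse, so it need not be redone here; invoking that lemma's bijection and specializing to $M''=\emptyset$ is the cleanest route. Everything else in the argument is immediate.
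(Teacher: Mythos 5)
Your argument is correct and matches the paper's own route: the paper likewise defines $\mathcal{V}\colon\mathcal{M}_{n-1}(P_2)\to\mathcal{M}^I_n(P_2)$ by adding a reduction arc as in the gluing procedure of Lemma~\ref{lem-P2-catalan}, observes that it is a bijection, and deduces the count from that lemma. Your extra checks of well-definedness, injectivity, and surjectivity are sound elaborations of the same bookkeeping the paper delegates to the gluing/splitting procedures.
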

	
	\subsection{Proofs of Theorems~\ref{thm-Catalan} and~\ref{thm1-general}}\label{sub-genete}
	
	We first prove Theorem~\ref{thm1-general}  (Theorem~\ref{thm-Catalan} is proved at the end of this subsection). 
	For \(k=1\), we define \(P_3^1\) to be the single arc pattern. Then all four patterns \(P_2^1,P_3^1,P_4^1\), and \(P_5^1\) coincide.
	For \(k=2\), we have
	\(\left|\M_n\left(
	\begin{tikzpicture}[scale=0.2, baseline=-0.5ex] 
		\draw (-0.6,0) -- (3.4,0);
		\foreach \x in {0,1,2,3}
		{
			\filldraw (\x,0) circle (2pt);
		}
		\draw[black] (0,0) arc (180:0:1 and 1);
		\draw[black] (1,0) arc (180:0:1 and 1);
	\end{tikzpicture}
	\right)\right|
	=
	\left|\M_n\left(
	\begin{tikzpicture}[scale=0.2, baseline=-0.5ex] 
		\draw (-0.6,0) -- (3.4,0);
		\foreach \x in {0,1,2,3}
		{
			\filldraw (\x,0) circle (2pt);
		}
		\draw[black] (0,0) arc (180:0:0.5 and 1);
		\draw[black] (2,0) arc (180:0:0.5 and 1);
	\end{tikzpicture}
	\right)\right|=1\), since the two avoidance classes consist only of the \(n\)-noncrossing and the \(n\)-crossing, respectively.
	Hence the following construction is for \(k\ge 3\).
	To prove Theorem~\ref{thm1-general} for \(k\ge 3\), we construct a bijection \(\Phi\).
	
	\textbf{The construction of $\Phi$:} Given $M \in \M_n(P^k_2)$, if $M \in \M_n(P^k_4)$, then we let $\Phi(M)=M$. Otherwise, let the arcs $[a_{i_1}, b_{i_1}], [a_{i_2}, b_{i_2}],$ $\ldots, [a_{i_k}, b_{i_k}]$ represent the first occurrence of the pattern \(P^k_4\) in \(M\), obtained by choosing the leftmost possible \(a_{i_1}\), then the leftmost possible \(a_{i_2}\), and so on. 
	Consider the maximal crossing $[a_{i_2}, b_{i_2}], [a_{i_2}+1, b_{i_2}+1], \ldots, [a_{i_2}+j-1, b_{i_2}+j-1]$ with consecutive openers and closers. Let $X$ be the (possibly empty) set of openers inside the arc $[a_{i_1}, b_{i_1}]$ whose corresponding closers lie to the right of $b_{i_2}$. The map $\Phi(M)$ is defined as follows.
	We move the openers \(a_{i_2}, a_{i_2}+1, \ldots, a_{i_2}+j-1\) to the position immediately preceding the first opener in \(X\) (or \(b_{i_1}\) if \(X = \emptyset\)) as shown in Figure~\ref{fig-M_nP^k_2-M_nP^k_4}.
	\begin{figure}[h]
		\centering
		\begin{tikzpicture}[scale = 0.34] 
			\draw [dashed](-0.5,0) -- (4,0);
			\draw (4,0) -- (5,0);
			\draw[dashed] (5,0) -- (10,0);
			\draw (10,0) -- (11,0);
			\draw[dashed] (11,0) -- (20.8,0);
			\foreach \x in {0.3,2,4,5,11,12,14,15,16,18,19,20}
			{
				\filldraw (\x,0) circle (2pt);
			}
			\draw[thick] (8,0) circle (6pt);
			\draw[thick] (8,0) circle (1.5pt);
			\draw[thick] (10,0) circle (6pt);
			\draw[thick] (10,0) circle (1.5pt);
			\draw[domain=0:90, smooth, variable=\t, shift={(3,0)}] 
			plot ({-cos(\t)}, {sin(\t)});
			
			\draw[domain=0:90, smooth, variable=\t, shift={(5,0)}] 
			plot ({-cos(\t)}, {sin(\t)});

			\draw[domain=0:90, smooth, variable=\t, shift={(16,0)}] 
			plot ({-cos(\t)}, {sin(\t)});
			
			\draw[domain=0:90, smooth, variable=\t, shift={(18,0)}] 
			plot ({cos(\t)}, {sin(\t)});
			\draw[black] (0.3,0) arc (180:0:2.35 and 1.5);
			\draw[black] (8,0) arc (180:0:2 and 1.5);
			\draw[black] (10,0) arc (180:0:2 and 1.5);
			\draw[black] (11,0) arc (180:0:2.5 and 1.5);
			\draw[black] (18,0) arc (180:0:1 and 1.5);
			
			\node[anchor=north west] at (-0.3, 2.3) {\scriptsize{$M$}};
			\node[anchor=north west][rotate=90] at (1.6, -1.2) {$\Big{\{}$};
			\node[anchor=north west] at (2.2, -0.5) {\scriptsize{$X$}};
			\node[anchor=north west] at (-0.7, -0.05) {\tiny{$a_{i_1}$}};
			\node[anchor=north west] at (4.3, -0.05) {\tiny{$b_{i_1}$}};
			\node[anchor=north west] at (7.3, -0.05) {\tiny{$a_{i_2}$}};
			\node[anchor=north west] at (10.3, -0.05) {\tiny{$a_{i_3}$}};
			\node[anchor=north west] at (11.5, -0.05) {\tiny{$b_{i_2}$}};
			\node[anchor=north west] at (15.3, -0.05) {\tiny{$b_{i_3}$}};
			\node[anchor=north west] at (17.3, -0.05) {\tiny{$a_{i_k}$}};
			\node[anchor=north west] at (19.3, -0.05) {\tiny{$b_{i_k}$}};
			\node[anchor=north west] at (5.5, 1.5) {\footnotesize{$\cdots$}};
			\node[anchor=north west] at (16, 1.5) {\footnotesize{$\cdots$}};
		\end{tikzpicture}
		\raisebox{4ex}{$\xrightarrow{}$}
		\begin{tikzpicture}[scale = 0.34] 
			\draw [dashed](-0.5,0) -- (4,0);
			\draw (4,0) -- (5,0);
			\draw[dashed] (5,0) -- (7,0);
			\draw (7,0) -- (8,0);
			\draw[dashed] (8,0) -- (20.8,0);
			\foreach \x in {0.3,5,7,8,11,12,14,15,16,18,19,20}
			{
				\filldraw (\x,0) circle (2pt);
			}
			\draw[thick] (2,0) circle (6pt);
			\draw[thick] (2,0) circle (1.5pt);
			\draw[thick] (4,0) circle (6pt);
			\draw[thick] (4,0) circle (1.5pt);
			\draw[domain=0:90, smooth, variable=\t, shift={(6,0)}] 
			plot ({-cos(\t)}, {sin(\t)});
			
			\draw[domain=0:90, smooth, variable=\t, shift={(8,0)}] 
			plot ({-cos(\t)}, {sin(\t)});

			\draw[domain=0:90, smooth, variable=\t, shift={(16,0)}] 
			plot ({-cos(\t)}, {sin(\t)});
			
			\draw[domain=0:90, smooth, variable=\t, shift={(18,0)}] 
			plot ({cos(\t)}, {sin(\t)});
			\draw[black] (0.3,0) arc (180:0:3.85 and 1.5);
			\draw[black] (2,0) arc (180:0:5 and 1.5);
			\draw[black] (4,0) arc (180:0:5 and 1.5);
			\draw[black] (11,0) arc (180:0:2.5 and 1.5);
			\draw[black] (18,0) arc (180:0:1 and 1.5);
			
			\node[anchor=north west] at (-0.3, 2.3) {\scriptsize{$M^{(1)}$}};
			\node[anchor=north west][rotate=90] at (4.6, -1.1) {$\Big{\{}$};
			\node[anchor=north west] at (5.2, -0.5) {\scriptsize{$X$}};
			\node[anchor=north west] at (-0.7, -0.05) {\tiny{$a_{i_1}$}};
			\node[anchor=north west] at (7.3, -0.05) {\tiny{$b_{i_1}$}};
			\node[anchor=north west] at (1.3, -0.05) {\tiny{$a_{i_2}$}};
			\node[anchor=north west] at (10.1, -0.05) {\tiny{$a_{i_3}$}};
			\node[anchor=north west] at (11.4, -0.05) {\tiny{$b_{i_2}$}};
			\node[anchor=north west] at (15.3, -0.05) {\tiny{$b_{i_3}$}};
			\node[anchor=north west] at (17.3, -0.05) {\tiny{$a_{i_k}$}};
			\node[anchor=north west] at (19.3, -0.05) {\tiny{$b_{i_k}$}};
			\node[anchor=north west] at (8.5, 1.3) {\footnotesize{$\cdots$}};
			\node[anchor=north west] at (16, 1.3) {\footnotesize{$\cdots$}};
		\end{tikzpicture}
		\caption{The first moving procedure in the map $\Phi$.}
		\label{fig-M_nP^k_2-M_nP^k_4}
	\end{figure}
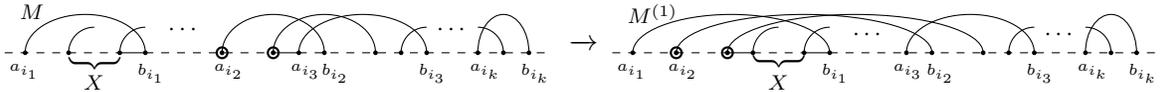
	Iterate the moving operation until no occurrences of \(P^k_4\) remain. This process yields a sequence of matchings \(M = M^{(0)}, M^{(1)}, \dots, M^{(\ell)}\).  We define \(\Phi(M) = M^{(\ell)}\).
	Figure~\ref{fig-exa-Phi} illustrates an example of $\Phi$ mapping $\M_n(P^4_2)$ to $\M_n(P^4_4)$.

	\begin{figure}[ht]
		\centering
		\begin{tikzpicture}[scale = 0.34] 
			\draw (-0.5,0) -- (21.5,0);
			\foreach \x in {0,1,2,5,6,7,8,9,10,11,12,13,14,15,16,17,18,19,20,21}
			{
				\filldraw (\x,0) circle (2pt);
			}
			\draw[thick] (4,0) circle (6pt);
			\draw[thick] (4,0) circle (1.5pt);
			\draw[thick] (3,0) circle (6pt);
			\draw[thick] (3,0) circle (1.5pt);
			\draw[very thick, line width=0.8pt] (0,0) arc (180:0:1 and 1);
			\draw[black] (1,0) arc (180:0:4 and 1.5);
			\draw[very thick, line width=0.8pt] (3,0) arc (180:0:1.5 and 1);
			\draw[black] (4,0) arc (180:0:1.5 and 1);
			\draw[very thick, line width=0.8pt] (5,0) arc (180:0:2.5 and 1);
			\draw[very thick, line width=0.8pt] (8,0) arc (180:0:1.5 and 1);
			\draw[black] (12,0) arc (180:0:1.5 and 1);
			\draw[black] (13,0) arc (180:0:2 and 1);
			\draw[black] (14,0) arc (180:0:3 and 1.5);
			\draw[black] (16,0) arc (180:0:1 and 1);
			\draw[black] (19,0) arc (180:0:1 and 1);
			
			\node[anchor=north west] at (-0.3, 2.3) {\scriptsize{$M$}};
			\node[anchor=north west] at (-0.6, -0.05) {\tiny{1}};
			\node[anchor=north west] at (0.4, -0.05) {\tiny{2}};
			\node[anchor=north west] at (1.4, -0.05) {\tiny{3}};
			\node[anchor=north west] at (2.4, -0.05) {\tiny{4}};
			\node[anchor=north west] at (3.4, -0.05) {\tiny{5}};
			\node[anchor=north west] at (4.4, -0.05) {\tiny{6}};
			\node[anchor=north west] at (5.4, -0.05) {\tiny{7}};
			\node[anchor=north west] at (6.4, -0.05) {\tiny{8}};
			\node[anchor=north west] at (7.4, -0.05) {\tiny{9}};
			\node[anchor=north west] at (8.3, -0.05) {\tiny{10}};
			\node[anchor=north west] at (9.3, -0.05) {\tiny{11}};
			\node[anchor=north west] at (10.3, -0.05) {\tiny{12}};
			\node[anchor=north west] at (11.3, -0.05) {\tiny{13}};
			\node[anchor=north west] at (12.3, -0.05) {\tiny{14}};
			\node[anchor=north west] at (13.3, -0.05) {\tiny{15}};
			\node[anchor=north west] at (14.3, -0.05) {\tiny{16}};
			\node[anchor=north west] at (15.3, -0.05) {\tiny{17}};
			\node[anchor=north west] at (16.3, -0.05) {\tiny{18}};
			\node[anchor=north west] at (17.3, -0.05) {\tiny{19}};
			\node[anchor=north west] at (18.3, -0.05) {\tiny{20}};
			\node[anchor=north west] at (19.3, -0.05) {\tiny{21}};
			\node[anchor=north west] at (20.3, -0.05) {\tiny{22}};
		\end{tikzpicture}
		\hspace{-0.4cm}
		\raisebox{3ex}{$\xrightarrow{}$}
		\hspace{-0.2cm}
		\begin{tikzpicture}[scale = 0.34] 
			\draw (-0.5,0) -- (21.5,0);
			\foreach \x in {0,1,2,3,4,5,6,7,8,9,10,11,13,14,15,16,17,18,19,20,21}
			{
				\filldraw (\x,0) circle (2pt);
			}
			\draw[thick] (12,0) circle (6pt);
			\draw[thick] (12,0) circle (1.5pt);
			\draw[very thick, line width=0.8pt] (0,0) arc (180:0:2 and 1);
			\draw[black] (1,0) arc (180:0:2.5 and 1);
			\draw[black] (2,0) arc (180:0:2.5 and 1);
			\draw[black] (3,0) arc (180:0:3 and 1.5);
			\draw[black] (5,0) arc (180:0:2.5 and 1);
			\draw[black] (8,0) arc (180:0:1.5 and 1);
			\draw[very thick, line width=0.8pt] (12,0) arc (180:0:1.5 and 1);
			\draw[very thick, line width=0.8pt] (13,0) arc (180:0:2 and 1);
			\draw[black] (14,0) arc (180:0:3 and 1.5);
			\draw[very thick, line width=0.8pt] (16,0) arc (180:0:1 and 1);
			\draw[black] (19,0) arc (180:0:1 and 1);
			
			\node[anchor=north west] at (-0.3, 2.3) {\scriptsize{$M^{(1)}$}};
			\node[anchor=north west] at (-0.6, -0.05) {\tiny{1}};
			\node[anchor=north west] at (0.4, -0.05) {\tiny{2}};
			\node[anchor=north west] at (1.4, -0.05) {\tiny{3}};
			\node[anchor=north west] at (2.4, -0.05) {\tiny{4}};
			\node[anchor=north west] at (3.4, -0.05) {\tiny{5}};
			\node[anchor=north west] at (4.4, -0.05) {\tiny{6}};
			\node[anchor=north west] at (5.4, -0.05) {\tiny{7}};
			\node[anchor=north west] at (6.4, -0.05) {\tiny{8}};
			\node[anchor=north west] at (7.4, -0.05) {\tiny{9}};
			\node[anchor=north west] at (8.3, -0.05) {\tiny{10}};
			\node[anchor=north west] at (9.3, -0.05) {\tiny{11}};
			\node[anchor=north west] at (10.3, -0.05) {\tiny{12}};
			\node[anchor=north west] at (11.3, -0.05) {\tiny{13}};
			\node[anchor=north west] at (12.3, -0.05) {\tiny{14}};
			\node[anchor=north west] at (13.3, -0.05) {\tiny{15}};
			\node[anchor=north west] at (14.3, -0.05) {\tiny{16}};
			\node[anchor=north west] at (15.3, -0.05) {\tiny{17}};
			\node[anchor=north west] at (16.3, -0.05) {\tiny{18}};
			\node[anchor=north west] at (17.3, -0.05) {\tiny{19}};
			\node[anchor=north west] at (18.3, -0.05) {\tiny{20}};
			\node[anchor=north west] at (19.3, -0.05) {\tiny{21}};
			\node[anchor=north west] at (20.3, -0.05) {\tiny{22}};
		\end{tikzpicture}
		\raisebox{3ex}{$\xrightarrow{}$}
		\hspace{-0.2cm}
		\begin{tikzpicture}[scale = 0.34] 
			\draw (-0.5,0) -- (21.5,0);
			\foreach \x in {0,1,2,3,4,5,6,7,8,9,10,11,15,14,12,16,17,18,19,20,21}
			{
				\filldraw (\x,0) circle (2pt);
			}
			\draw[thick] (13,0) circle (6pt);
			\draw[thick] (13,0) circle (1.5pt);
			\draw[very thick, line width=0.8pt] (0,0) arc (180:0:2.5 and 1);
			\draw[black] (1,0) arc (180:0:3 and 1);
			\draw[black] (2,0) arc (180:0:3 and 1);
			\draw[black] (3,0) arc (180:0:3.5 and 1.5); \draw[black] (4,0) arc (180:0:5.5 and 1.5);
			\draw[black] (6,0) arc (180:0:2.5 and 1);
			\draw[black] (9,0) arc (180:0:1.5 and 1);
			\draw[very thick, line width=0.8pt] (13,0) arc (180:0:2 and 1);
			\draw[very thick, line width=0.8pt] (14,0) arc (180:0:3 and 1.5);
			\draw[black] (16,0) arc (180:0:1 and 1);
			\draw[very thick, line width=0.8pt] (19,0) arc (180:0:1 and 1);
			
			\node[anchor=north west] at (-0.3, 2.3) {\scriptsize{$M^{(2)}$}};
			\node[anchor=north west] at (-0.6, -0.05) {\tiny{1}};
			\node[anchor=north west] at (0.4, -0.05) {\tiny{2}};
			\node[anchor=north west] at (1.4, -0.05) {\tiny{3}};
			\node[anchor=north west] at (2.4, -0.05) {\tiny{4}};
			\node[anchor=north west] at (3.4, -0.05) {\tiny{5}};
			\node[anchor=north west] at (4.4, -0.05) {\tiny{6}};
			\node[anchor=north west] at (5.4, -0.05) {\tiny{7}};
			\node[anchor=north west] at (6.4, -0.05) {\tiny{8}};
			\node[anchor=north west] at (7.4, -0.05) {\tiny{9}};
			\node[anchor=north west] at (8.3, -0.05) {\tiny{10}};
			\node[anchor=north west] at (9.3, -0.05) {\tiny{11}};
			\node[anchor=north west] at (10.3, -0.05) {\tiny{12}};
			\node[anchor=north west] at (11.3, -0.05) {\tiny{13}};
			\node[anchor=north west] at (12.3, -0.05) {\tiny{14}};
			\node[anchor=north west] at (13.3, -0.05) {\tiny{15}};
			\node[anchor=north west] at (14.3, -0.05) {\tiny{16}};
			\node[anchor=north west] at (15.3, -0.05) {\tiny{17}};
			\node[anchor=north west] at (16.3, -0.05) {\tiny{18}};
			\node[anchor=north west] at (17.3, -0.05) {\tiny{19}};
			\node[anchor=north west] at (18.3, -0.05) {\tiny{20}};
			\node[anchor=north west] at (19.3, -0.05) {\tiny{21}};
			\node[anchor=north west] at (20.3, -0.05) {\tiny{22}};
		\end{tikzpicture}
		\hspace{-0.4cm}
		\raisebox{3ex}{$\xrightarrow{}$}
		\hspace{-0.2cm}
		\begin{tikzpicture}[scale = 0.34] 
			\draw (-0.5,0) -- (21.5,0);
			\foreach \x in {0,1,2,3,4,5,6,7,8,9,10,11,15,14,12,13,16,17,18,19,20,21}
			{
				\filldraw (\x,0) circle (2pt);
			}
			\draw[black] (0,0) arc (180:0:3 and 1);
			\draw[black] (1,0) arc (180:0:3.5 and 1);
			\draw[black] (2,0) arc (180:0:3.5 and 1);
			\draw[black] (3,0) arc (180:0:4 and 1.5); \draw[black] (4,0) arc (180:0:5.5 and 1.5);
			\draw[black] (5,0) arc (180:0:6 and 1.5);
			\draw[black] (7,0) arc (180:0:2.5 and 1);
			\draw[black] (10,0) arc (180:0:1.5 and 1);
			\draw[black] (14,0) arc (180:0:3 and 1.5);
			\draw[black] (16,0) arc (180:0:1 and 1);
			\draw[black] (19,0) arc (180:0:1 and 1);
			\node[anchor=north west] at (-0.3, 2.3) {\scriptsize{$M^{(3)}$}};
			\node[anchor=north west] at (-0.6, -0.05) {\tiny{1}};
			\node[anchor=north west] at (0.4, -0.05) {\tiny{2}};
			\node[anchor=north west] at (1.4, -0.05) {\tiny{3}};
			\node[anchor=north west] at (2.4, -0.05) {\tiny{4}};
			\node[anchor=north west] at (3.4, -0.05) {\tiny{5}};
			\node[anchor=north west] at (4.4, -0.05) {\tiny{6}};
			\node[anchor=north west] at (5.4, -0.05) {\tiny{7}};
			\node[anchor=north west] at (6.4, -0.05) {\tiny{8}};
			\node[anchor=north west] at (7.4, -0.05) {\tiny{9}};
			\node[anchor=north west] at (8.3, -0.05) {\tiny{10}};
			\node[anchor=north west] at (9.3, -0.05) {\tiny{11}};
			\node[anchor=north west] at (10.3, -0.05) {\tiny{12}};
			\node[anchor=north west] at (11.3, -0.05) {\tiny{13}};
			\node[anchor=north west] at (12.3, -0.05) {\tiny{14}};
			\node[anchor=north west] at (13.3, -0.05) {\tiny{15}};
			\node[anchor=north west] at (14.3, -0.05) {\tiny{16}};
			\node[anchor=north west] at (15.3, -0.05) {\tiny{17}};
			\node[anchor=north west] at (16.3, -0.05) {\tiny{18}};
			\node[anchor=north west] at (17.3, -0.05) {\tiny{19}};
			\node[anchor=north west] at (18.3, -0.05) {\tiny{20}};
			\node[anchor=north west] at (19.3, -0.05) {\tiny{21}};
			\node[anchor=north west] at (20.3, -0.05) {\tiny{22}};
		\end{tikzpicture}
		\caption{An example of the map $\Phi$ with $\Phi(M) = M^{(3)}$. The thick arcs represent the leftmost occurrence of 
			$P^4_4$, and the circled dots correspond to 
			$a_{i_2}, a_{i_2}+1, \ldots, a_{i_2}+j-1$.}
		\label{fig-exa-Phi}
	\end{figure}
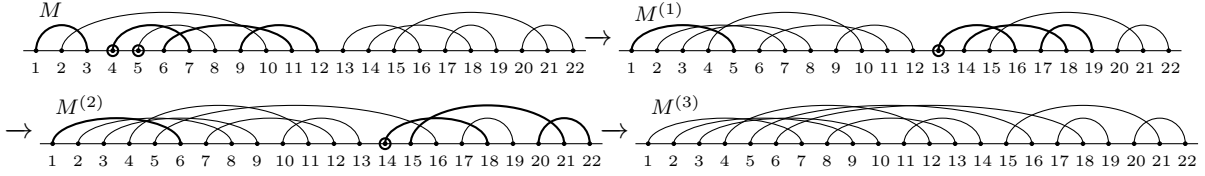
	
	\begin{lem}\label{thm-Phi-P2-P4}
		The map $\Phi$ is a bijection from $\mathcal{M}_n(P^k_2)$ to $\mathcal{M}_n(P^k_4)$.
	\end{lem}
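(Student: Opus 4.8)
The plan is to prove three things in sequence: (I) the iterative definition of $\Phi$ is sound, i.e.\ the moving procedure terminates and every intermediate matching $M^{(i)}$ is again a Stoimenow matching, so that $\Phi(M)\in\mathcal{M}_n(P^k_4)$; (II) an analogous ``un-absorbing'' procedure defines a map $\Theta\colon\mathcal{M}_n(P^k_4)\to\mathcal{M}_n(P^k_2)$; and (III) $\Phi$ and $\Theta$ are mutually inverse, which gives the bijection.

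For (I), termination comes from a monovariant. Set $\sigma(M)=\sum_{[a,b]\in M}a$, the sum of the opener positions. A single moving operation relocates the consecutive openers $a_{i_2},\ldots,a_{i_2}+j-1$, which lie strictly to the right of $b_{i_1}$, to positions weakly to the left of $b_{i_1}$ (immediately before $X$, or before $b_{i_1}$ when $X=\emptyset$); the only elements whose position strictly increases are those squeezed between the old and the new location of the block, and this set contains the \emph{closer} $b_{i_1}$, which is not an opener. Comparing positions yields $\sigma(M^{(i+1)})\le\sigma(M^{(i)})-j<\sigma(M^{(i)})$, and since $\sigma$ is a positive integer the procedure halts; the stopping rule then gives $\Phi(M)\in\mathcal{M}_n(P^k_4)$. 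To see that $M^{(i+1)}$ is still a Stoimenow matching, note that after the move the $j$ relocated arcs have consecutive openers and (unchanged) consecutive closers, so they pairwise cross and form no nesting, while $[a_{i_1},b_{i_1}]$ now crosses each of them rather than nesting over it; a finite case analysis on the elements adjacent to the moved openers and to the closers $b_{i_2},\ldots,b_{i_2}+j-1$, using the maximality of the crossing and the defining property of $X$, shows that any Type~I or Type~II pair in $M^{(i+1)}$ would already be present in $M^{(i)}$. This parallels the verification of the gluing step in the proof of Lemma~\ref{lem-P2-catalan}.

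For (II) and (III), the reverse move is applied to a matching containing $P^k_2$: one locates the relevant $k$-chain together with the arc $[a_{i_1},b_{i_1}]$ inside which the corresponding maximal crossing of straddling arcs sits, and slides those openers back to the right, immediately in front of the block of their closers. This strictly increases $\sigma$ and preserves the Stoimenow property by an argument entirely analogous to that in (I), so iterating it terminates --- necessarily in $\mathcal{M}_n(P^k_2)$ --- and defines $\Theta$. That a forward move followed by the matching reverse move is the identity (and conversely) is routine once the local operations are set up. The real obstacle, which I expect to consume most of the write-up, is to certify that the two \emph{procedures} retrace each other, given that $\Phi$ resolves the leftmost occurrence of $P^k_4$ at each step whereas $\Theta$ resolves a canonically chosen occurrence of $P^k_2$: one must show that the $P^k_2$-occurrence created by the last forward step is exactly the one $\Theta$ selects first on $\Phi(M)$, and likewise on the other side, and then induct on the number of steps. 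Two further points belonging to this obstacle are that a reverse move is available precisely when $P^k_2$ is present (so the reverse procedure really stops in $\mathcal{M}_n(P^k_2)$), and that the greedy choices stay synchronized throughout. The cleanest organization is probably to prove that the forward moves are locally confluent --- so that, together with termination, $\Phi(M)$ is independent of the order in which $P^k_4$-occurrences are resolved --- and likewise for the reverse moves; the mutual-inverse statement, and hence the bijection, then follows formally. The termination and Stoimenow-preservation steps are routine once the monovariant $\sigma$ is in hand; the combinatorial work is entirely in this confluence/synchronization step.
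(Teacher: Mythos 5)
Your framework is the same as the paper's (iterate a local move resolving an extremal occurrence of $P^k_4$, define a reverse procedure, argue the two are mutually inverse), and your monovariant $\sigma(M)=\sum a$ is a clean and correct termination argument --- indeed cleaner than the paper's, which derives termination from the fact that the number of $P^k_4$-occurrences strictly decreases. However, there is a genuine gap: the bijectivity itself is not proved. You explicitly defer the heart of the matter --- that the forward and reverse procedures retrace each other --- to an unexecuted ``confluence/synchronization'' analysis, and you acknowledge that this is where all the combinatorial work lies. A plan for a proof is not a proof, and confluence of these rewriting systems is not obviously easier than the statement you are trying to establish.

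What is missing is precisely the structural observation the paper uses to make the synchronization work. Because $\Phi$ always resolves the \emph{leftmost} occurrence of $P^k_4$ (leftmost $a_{i_1}$, then leftmost $a_{i_2}$, etc.), one can argue that a move never \emph{creates} a new occurrence of $P^k_4$: any such occurrence would have to involve the freshly relocated openers $a_{i_2},\dots,a_{i_2}+j-1$, and its existence would contradict the leftmost choice of $a_{i_1}$ in the step just performed. Dually, the reverse move, applied to the occurrence of $P^k_2$ chosen with rightmost $b_{i_k}$, then rightmost $b_{i_{k-1}}$, etc., creates no new occurrence of $P^k_2$ and plants an occurrence of $P^k_4$ that persists. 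These two facts are what let one match the last forward step with the first reverse step and induct on the number of moves; they also guarantee that $\Phi(M)$ contains $P^k_2$ whenever $M$ contained $P^k_4$, which is needed so that the ``identity'' part of the map ($M\in\M_n(P^k_2)\cap\M_n(P^k_4)$) does not collide with the image of the non-trivial part --- a point your proposal does not address. Finally, your claim that each intermediate $M^{(i)}$ remains a Stoimenow matching is asserted via ``a finite case analysis'' that is not carried out; this is forgivable (the paper is equally terse there), but the inverse argument is not optional, and as written your proposal does not contain it.
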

	\begin{proof}
		It suffices to prove that $\Phi$ is well-defined and the inverse map $\Phi^{-1}$ exists.
		
		\textbf{$\Phi$ is well-defined.} We need to justify that the process terminates and results in $\Phi(M) \in \mathcal{M}_n(P^k_4)$.  
		Clearly, $\Phi(M) \in \mathcal{M}_n$.  
		Since each application of the moving procedure eliminates existing occurrences of $P^k_4$ and creates at least one occurrence of $P^k_2$, which does not disappear in subsequent steps, it remains to show that no new occurrences of $P^k_4$ are created.  Suppose, for contradiction, that a new occurrence of $P^k_4$ were introduced at some stage.  
		Such an occurrence would necessarily involve arcs with the recently moved openers $a_{i_2}, a_{i_2}+1, \ldots, a_{i_2}+j-1$, which contradicts our choice of $a_{i_1}$ as the first possible opener.
		
		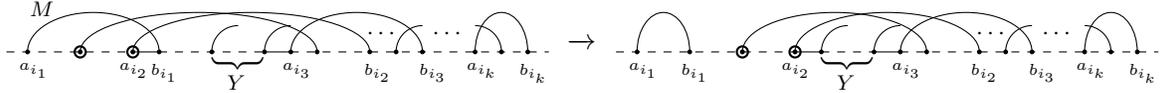
\begin{figure}[h]
			\centering
			\begin{tikzpicture}[scale = 0.35] 
				\draw [dashed](-0.8,0) -- (4,0);
				\draw (4,0) -- (5,0);
				\draw[dashed] (5,0) -- (9,0);
				\draw (9,0) -- (10,0);
				\draw[dashed] (10,0) -- (19.8,0);
				\foreach \x in {0,5,7,9,10,11,13,14,15,17,18,19}
				{
					\filldraw (\x,0) circle (2pt);
				}
				\draw[thick] (2,0) circle (6pt);
				\draw[thick] (2,0) circle (1.5pt);
				\draw[thick] (4,0) circle (6pt);
				\draw[thick] (4,0) circle (1.5pt);
				\draw[domain=0:90, smooth, variable=\t, shift={(8,0)}] 
				plot ({-cos(\t)}, {sin(\t)});
				\draw[domain=0:90, smooth, variable=\t, shift={(10,0)}] 
				plot ({-cos(\t)}, {sin(\t)});
				\draw[domain=0:90, smooth, variable=\t, shift={(15,0)}] 
				plot ({-cos(\t)}, {sin(\t)});
				\draw[domain=0:90, smooth, variable=\t, shift={(17,0)}] 
				plot ({cos(\t)}, {sin(\t)});
				\draw[black] (0,0) arc (180:0:2.5 and 1.5);
				\draw[black] (2,0) arc (180:0:4.5 and 1.5);
				\draw[black] (4,0) arc (180:0:4.5 and 1.5);
				\draw[black] (10,0) arc (180:0:2.5 and 1.5);
				\draw[black] (17,0) arc (180:0:1 and 1.5);
				
				\node[anchor=north west] at (-0.3, 2.3) {\scriptsize $M$};
				\node[anchor=north west][rotate=90] at (6.6, -1.2) {$\Big{\{}$};
				\node[anchor=north west] at (7.2, -0.5) {\scriptsize{$Y$}};
				\node[anchor=north west] at (-0.7, -0.05) {\tiny{$a_{i_1}$}};
				\node[anchor=north west] at (4.3, -0.05) {\tiny{$b_{i_1}$}};
				\node[anchor=north west] at (3.1, -0.05) {\tiny{$a_{i_2}$}};
				\node[anchor=north west] at (9.3, -0.05) {\tiny{$a_{i_3}$}};
				\node[anchor=north west] at (12.4, -0.05) {\tiny{$b_{i_2}$}};
				\node[anchor=north west] at (14.5, -0.05) {\tiny{$b_{i_3}$}};
				\node[anchor=north west] at (16.3, -0.05) {\tiny{$a_{i_k}$}};
				\node[anchor=north west] at (18.3, -0.05) {\tiny{$b_{i_k}$}};
				\node[anchor=north west] at (12.5, 1.3) {\footnotesize{$\cdots$}};
				\node[anchor=north west] at (15, 1.3) {\footnotesize{$\cdots$}};
			\end{tikzpicture}
			\raisebox{4ex}{$\xrightarrow{}$}
			\begin{tikzpicture}[scale = 0.35] 
				\draw [dashed](-0.8,0) -- (6,0);
				\draw (6,0) -- (7,0);
				\draw[dashed] (7,0) -- (9,0);
				\draw (9,0) -- (10,0);
				\draw[dashed] (10,0) -- (19.8,0);
				\foreach \x in {0,2,7,9,10,11,13,14,15,17,18,19}
				{
					\filldraw (\x,0) circle (2pt);
				}
				\draw[thick] (4,0) circle (6pt);
				\draw[thick] (4,0) circle (1.5pt);
				\draw[thick] (6,0) circle (6pt);
				\draw[thick] (6,0) circle (1.5pt);
				\draw[domain=0:90, smooth, variable=\t, shift={(15,0)}] 
				plot ({-cos(\t)}, {sin(\t)});
				\draw[domain=0:90, smooth, variable=\t, shift={(8,0)}] plot ({-cos(\t)}, {sin(\t)});
				\draw[domain=0:90, smooth, variable=\t, shift={(10,0)}] 
				plot ({-cos(\t)}, {sin(\t)});
				
				\draw[domain=0:90, smooth, variable=\t, shift={(17,0)}] 
				plot ({cos(\t)}, {sin(\t)});
				\draw[black] (0,0) arc (180:0:1 and 1.5);
				\draw[black] (4,0) arc (180:0:3.5 and 1.5);
				\draw[black] (6,0) arc (180:0:3.5 and 1.5);
				\draw[black] (10,0) arc (180:0:2.5 and 1.5);
				\draw[black] (17,0) arc (180:0:1 and 1.5);
				
				\node[anchor=north west] at (-1.2, 2.6) {};
				\node[anchor=north west][rotate=90] at (6.6, -1.2) {$\Big{\{}$};
				\node[anchor=north west] at (7.2, -0.5) {\scriptsize{$Y$}};
				\node[anchor=north west] at (-0.7, -0.05) {\tiny{$a_{i_1}$}};
				\node[anchor=north west] at (1.3, -0.05) {\tiny{$b_{i_1}$}};
				\node[anchor=north west] at (5.1, -0.05) {\tiny{$a_{i_2}$}};
				\node[anchor=north west] at (9.3, -0.05) {\tiny{$a_{i_3}$}};
				\node[anchor=north west] at (12.3, -0.05) {\tiny{$b_{i_2}$}};
				\node[anchor=north west] at (14.5, -0.05) {\tiny{$b_{i_3}$}};
				\node[anchor=north west] at (16.3, -0.05) {\tiny{$a_{i_k}$}};
				\node[anchor=north west] at (18.3, -0.05) {\tiny{$b_{i_k}$}};
				\node[anchor=north west] at (12.5, 1.3) {\footnotesize{$\cdots$}};
				\node[anchor=north west] at (15, 1.3) {\footnotesize{$\cdots$}};
			\end{tikzpicture}
			\caption{The first step in the inverse map $\Phi^{-1}$.}
			\label{fig-M_nP^k_4-M_nP^k_2}
		\end{figure}
		
		{\bf Reversibility of $\Phi$.} For any $M \in \mathcal{M}_n(P^k_4)$, if $M \in \mathcal{M}_n(P^k_2)$, then we let $\Phi^{-1}(M) = M$.  
		Otherwise, consider an occurrence $[a_{i_1}, b_{i_1}], [a_{i_2}, b_{i_2}], \ldots, [a_{i_k}, b_{i_k}]$ of the pattern $P^k_2$ in $M$, chosen by first selecting the rightmost possible $b_{i_k}$, then the rightmost possible $b_{i_{k-1}}$, and so on.  Let $[a_{i_2}-j+1, b_{i_2}-j+1], \ldots, [a_{i_2}-1, b_{i_2}-1], [a_{i_2}, b_{i_2}]$ form a maximal crossing with consecutive openers and closers.  
		Let $Y$ be the (possibly empty) set of openers between $b_{i_1}$ and $a_{i_3}$ whose corresponding closers lie to the right of $b_{i_2}$.  
		We obtain $M^{(1)}$ by moving the openers $a_{i_2}-j+1, \ldots, a_{i_2}$ immediately to the left of the first opener in $Y$ (or $a_{i_3}$ if $Y = \emptyset$), as shown in Figure~\ref{fig-M_nP^k_4-M_nP^k_2}.  The occurrence(s) of the pattern $P^k_2$ associated with the arcs $[a_{i_2}-j+1, b_{i_2}-j+1], \ldots, [a_{i_2}-1, b_{i_2}-1], [a_{i_2}, b_{i_2}]$ will be eliminated, and at least one occurrence of $P^k_4$ will be introduced, which will remain until the end of the procedure.  
		Repeat this process until no $P^k_2$ remains, and define the resulting matching as $\Phi^{-1}(M)$.
	\end{proof}
	
	Following a similar line of reasoning as in Lemma~\ref{thm-Phi-P2-P4}, we get the following result.
	\begin{coro}\label{coro-Phi-P5-P3}
		There exists a bijection from $\M_n(P^k_5)$ to $\M_n(P^k_3)$.
	\end{coro}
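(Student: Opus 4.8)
The plan is to repeat the proof of Lemma~\ref{thm-Phi-P2-P4} essentially verbatim, after observing that the pair $(P^k_5,P^k_3)$ stands in the same relation as the pair $(P^k_2,P^k_4)$: in $P^k_5$ (as in $P^k_2$) a distinguished leading arc \emph{crosses} the first arc of the embedded chain, whereas in $P^k_3$ (as in $P^k_4$) that leading arc instead lies entirely to the \emph{left} of the chain, the remaining arcs being identical in both members of each pair. Hence the same local surgery used for $\Phi$ — relocating the block of consecutive openers of a maximal crossing to just in front of a suitable run of openers — turns an occurrence of $P^k_3$ into an occurrence of $P^k_5$, and conversely, exactly as it turns $P^k_4$ into $P^k_2$ in Lemma~\ref{thm-Phi-P2-P4}.

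First I would define $\Phi'\colon\M_n(P^k_5)\to\M_n(P^k_3)$ by iterating this surgery. Given $M\in\M_n(P^k_5)$, if $M$ already avoids $P^k_3$ put $\Phi'(M)=M$; otherwise pick the leftmost occurrence $[a_{i_1},b_{i_1}],\dots,[a_{i_k},b_{i_k}]$ of $P^k_3$ in $M$ (greedily choosing $a_{i_1}$, then $a_{i_2}$, and so on, as in the proof of Lemma~\ref{thm-Phi-P2-P4}), take the maximal crossing $[a_{i_2},b_{i_2}],[a_{i_2}+1,b_{i_2}+1],\dots,[a_{i_2}+j-1,b_{i_2}+j-1]$ with consecutive openers and closers that it contains, let $X$ be the set of openers strictly inside $[a_{i_1},b_{i_1}]$ whose closers lie to the right of $b_{i_2}$, and move the openers $a_{i_2},\dots,a_{i_2}+j-1$ to the position immediately before the first opener of $X$ (or before $b_{i_1}$ if $X=\emptyset$). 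Iterating until no $P^k_3$ remains produces a finite sequence $M=M^{(0)},M^{(1)},\dots,M^{(\ell)}$, and I set $\Phi'(M)=M^{(\ell)}$. To see that $\Phi'$ is well defined I would argue, exactly as in Lemma~\ref{thm-Phi-P2-P4}, that (i) each move stays inside $\M_n$, because the Stoimenow conditions on the current matching force the openers being moved (and those in $X$) to form contiguous runs, so no Type~I or Type~II arc is created; (ii) each move destroys the chosen copy of $P^k_3$ while creating at least one copy of $P^k_5$, which survives all later moves; and (iii) no new copy of $P^k_3$ appears, since any such copy would have to involve the just-moved openers, contradicting the leftmost choice of the occurrence. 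Hence the process terminates with $\Phi'(M)\in\M_n(P^k_3)$.

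For the inverse I would run the mirror-image algorithm: given $M\in\M_n(P^k_3)$, pick the rightmost occurrence of $P^k_5$ (greedily choosing closers from the right), isolate the trailing maximal crossing $[a_{i_2}-j+1,b_{i_2}-j+1],\dots,[a_{i_2},b_{i_2}]$, let $Y$ be the set of openers between $b_{i_1}$ and $a_{i_3}$ whose closers lie to the right of $b_{i_2}$, move $a_{i_2}-j+1,\dots,a_{i_2}$ to just left of the first opener of $Y$ (or of $a_{i_3}$ if $Y=\emptyset$), and iterate until $P^k_5$ is avoided; this yields $(\Phi')^{-1}$, and one checks that $\Phi'$ and $(\Phi')^{-1}$ undo each other step by step, just as in Lemma~\ref{thm-Phi-P2-P4}. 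The main obstacle is the same bookkeeping that occupied most of that proof: one must pin down precisely which arc of $P^k_5$ (resp.\ $P^k_3$) plays the role of $[a_{i_1},b_{i_1}]$, which sub-configuration is the relevant maximal crossing, and which openers constitute $X$ (resp.\ $Y$), and then verify that the Stoimenow constraints again force these openers into contiguous blocks, so that the moves neither leave $\M_n$ nor manufacture spurious occurrences of the forbidden pattern. Once this structural dictionary between $(P^k_2,P^k_4)$ and $(P^k_5,P^k_3)$ is matched against the definitions of the patterns, the argument of Lemma~\ref{thm-Phi-P2-P4} transfers essentially line by line, giving the desired bijection.
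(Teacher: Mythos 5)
Your proposal is correct and matches the paper exactly: the paper's entire proof of this corollary is the single remark that one follows ``a similar line of reasoning as in Lemma~\ref{thm-Phi-P2-P4}'', and your identification of $(P^k_5,P^k_3)$ as standing in the same ``leading arc crosses the chain'' versus ``leading arc detached to the left'' relation as $(P^k_2,P^k_4)$ is precisely the structural dictionary that makes that transfer work. Your spelled-out version of the moving procedure, its inverse, and the termination/no-new-occurrence arguments is a faithful (indeed more detailed) rendering of what the paper intends.
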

	
	\begin{proof}[Proof of Theorem~\ref{thm1-general}]
		By reversing all matchings, it follows that 
		\(|\mathcal{M}_n(P^k_4)| = |\mathcal{M}_n(P^k_5)|\).  Together with Lemma~\ref{thm-Phi-P2-P4} and Corollary~\ref{coro-Phi-P5-P3}, the proof is complete.
	\end{proof}
	
	\begin{proof}[Proof of Theorem~\ref{thm-Catalan}]
		By setting $k=4$ in Theorem~\ref{thm1-general} and using Lemma~\ref{lem-P2-catalan}, we  obtain 
		$|\M_n(P_i)|= C_n$ for $2\le i\le 5$.
		Together with Lemma~\ref{lem-Nonnesting}, we complete the proof.
	\end{proof}
	
	\section{Fishburn structures and known bijections}\label{sec-bijection}
	
	We first recall the basic definitions of combinatorial objects counted by the Fishburn numbers.
	
	An unlabelled poset \( P = (X, \le) \) is \textbf{(2+2)}-\emph{free} if it does not contain an induced subposet isomorphic to \textbf{2+2}, the union of two disjoint 2-element chains. We let \( \mathcal{P}_n \) denote the set of \textbf{(2+2)}-free posets with \( n \) elements.

	Let \( \alpha = (\alpha_1, \alpha_2, \ldots, \alpha_n) \) be an integer sequence. We say that \( i \in \{1, \ldots, n-1\} \) is an \emph{ascent} in \( \alpha \) if \( \alpha_i < \alpha_{i+1} \).
	Denote by $\asc(\alpha)$ the number of ascents of $\alpha$. 
	The set of \textit{ascent sequences} of length $n$ is defined as
	\(\Asc{n} = \big\{\,(\alpha_1,\ldots , \alpha_n): \alpha_1=0\text{ and } 
	0 \leq \alpha_i \leq 1+\asc(\alpha_1,\ldots , \alpha_{i-1})
	\mbox{ for }2 \leq i\leq n \,\big\}.
	\)
	
	A permutation $\pi=\pi_1\pi_2\cdots\pi_n$ contains an occurrence of the \textit{Fishburn pattern}, if there exist $i,j$ with $1 \le i, j \le n$ such that $j>i+1$ and $\pi_j+1=\pi_i<\pi_{i+1}$. Permutations avoiding the Fishburn pattern are called \emph{Fishburn permutations}, and we denote by $\mathcal{F}_n$ the set of such permutations of length $n$.
	
	Denote by $\mathcal{P}_n(X)$ (resp., $\mathcal{A}_n(X)$, $\mathcal{F}_n(X)$) the subset of $\mathcal{P}_n$ (resp., $\mathcal{A}_n$, $\mathcal{F}_n$) consisting of objects that avoid a forbidden subobject $X$, and let $\PP(X)=\cup_{n\geq 0}\PP_n(X)$, $\A(X)=\cup_{n\geq 0}\A_n(X)$, and $\F(X)=\cup_{n\geq 0}\F_n(X)$.
	In the following subsections, we describe some known bijections $\Omega$, $\Psi$, $\Upsilon$, and $\Lambda$, along with their restrictions, between these sets, which will be used in Section~\ref{sec-stats}.
	
	\subsection{Restrictions of $\Omega$ on Stoimenow matchings yielding $\mathcal{P}_n\text{{\bf (3+1)}}$ and $\mathcal{P}_n\text{{\bf (N)}}$}\label{subsec-matching-poset}
	
	The bijection $\Omega$ was introduced by Bousquet-M\'elou \etal ~\cite{Bousquet-Claesson-Dukes-Kitaev}.
	Let $M=\{[a_1, b_1], \dots, [a_n, b_n]\} \in \M_n$. For each pair of arcs $[a_i, b_i]$ and $[a_j, b_j]$ in $M$, let $p_i$ and $p_j$ be their respective images in $P=\Omega(M)$. Then, we define an order $p_i < p_j$ on $P$ if
	\(
	b_i < a_j.
	\)
	Figure~\ref{fig-exa-matching-perm} illustrates an example of the bijection $\Omega$.
	
	The poset \textbf{3+1} is formed by the disjoint union of a 3-element chain and a 1-element chain, while \textbf{N} is the poset on $\{a,b,c,d\}$ with relations $a < c$, $a < d$, and $b < d$.  It is known that both $\mathcal{P}_n\textbf{(3+1)}$ and $\mathcal{P}_n\textbf{(N)}$ are enumerated by the $n$-th Catalan number, see~\cite{Disanto-Ferrari-Pinzani-Rinaldi, Stanley}. In the following two theorems, we impose restrictions on the map $\Omega$ to establish bijections between specific subsets of Stoimenow matchings and \textbf{(2+2)}-free posets.
	
	\begin{thm} \label{thm-P1-Poset3+1}
		The restriction of the map $\Omega$ to $\M_n(P_1)$ is also a bijection onto $\mathcal{P}_n\text{{\bf (3+1)}}$.   
	\end{thm}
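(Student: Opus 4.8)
The map $\Omega$ is already known to be a bijection from $\M_n$ to $\PP_n$, so it suffices to show that $\Omega$ restricts to a bijection between $\M_n(P_1)$ and $\PP_n\mathbf{(3+1)}$; equivalently, that for $M\in\M_n$, the poset $P=\Omega(M)$ contains an induced $\mathbf{3+1}$ if and only if $M$ contains an occurrence of the pattern $P_1$. By Lemma~\ref{lem-Nonnesting}, $\M_n(P_1)$ is exactly the set of nonnesting matchings, so I will instead prove the cleaner equivalent statement: \emph{$\Omega(M)$ is $\mathbf{(3+1)}$-free if and only if $M$ is nonnesting}. This reformulation is what makes the argument tractable, because the order relation $p_i<p_j \iff b_i<a_j$ translates ``$3$-element chain'' and ``nesting'' into simple inequalities among the $a$'s and $b$'s.

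\textbf{Step 1: unwind the definitions.} Three arcs $[a_i,b_i],[a_j,b_j],[a_k,b_k]$ give a $3$-chain $p_i<p_j<p_k$ in $P$ precisely when $b_i<a_j$ and $b_j<a_k$ (note this is a $3$-element chain in the poset, which corresponds to three pairwise non-overlapping arcs placed left to right — a $3$-noncrossing in the matching terminology of the paper, not a ``$k$-chain''). A fourth arc $[a_\ell,b_\ell]$ is incomparable to all of $p_i,p_j,p_k$ exactly when none of the six ordering inequalities holds, i.e. each of $[a_\ell,b_\ell]$ versus $[a_i,b_i]$, $[a_j,b_j]$, $[a_k,b_k]$ is an overlap (crossing) or a nesting. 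So ``$P$ contains $\mathbf{3+1}$'' means: there are three arcs forming a left-to-right sequence of separated arcs, plus a fourth arc that overlaps or nests with each of them.

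\textbf{Step 2: ($\Leftarrow$, contrapositive) if $M$ has a nesting then $P$ has a $\mathbf{3+1}$.} Suppose $[a,b]$ and $[c,d]$ nest, $a<c<d<b$. I would invoke (or re-run) the mechanism in the proof of Lemma~\ref{lem-Nonnesting}: choosing the nesting with $a$ maximal and $c$ minimal, Type~I-avoidance forces an arc $[a^-,a^+]$ with $a^-<a<a^+<c$, and Type~II-avoidance forces an arc $[b^-,x]$ with $d<b^-<b<x$, so that $[a^-,a^+],[a,b],[c,d],[b^-,x]$ is an occurrence of $P_1$. Now read off the poset relations: $[a^-,a^+]$ and $[b^-,x]$ are separated ($a^+<c<d<b^-$), and the arc $[c,d]$ is nested inside $[a,b]$ which... — here one must be slightly careful, since $P_1$ as drawn has four arcs with a specific overlap pattern, and I want three of them forming a noncrossing triple with the fourth incomparable to all. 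Concretely, $[a^-,a^+]$, $[c,d]$, $[b^-,x]$ are pairwise separated (left to right: $a^+<c$, $d<b^-$), hence form a $3$-chain in $P$; and $[a,b]$ overlaps $[a^-,a^+]$ (since $a^-<a<a^+<c<b$... wait, need $a<a^+$, yes, and $b>a^+$), nests $[c,d]$, and overlaps $[b^-,x]$ (since $b^-<b<x$ and $a<b^-$): so $[a,b]$ is incomparable to each of the other three, giving the desired $\mathbf{3+1}=\{[a^-,a^+],[c,d],[b^-,x]\}\sqcup\{[a,b]\}$.

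\textbf{Step 3: ($\Rightarrow$, contrapositive) if $M$ is nonnesting then $P$ is $\mathbf{(3+1)}$-free.} Assume $M$ is nonnesting and, for contradiction, that $P$ contains $\mathbf{3+1}$: arcs $e_1,e_2,e_3$ (with $e_1<e_2<e_3$, a separated left-to-right triple) and a fourth arc $f$ incomparable to all three. Since $M$ is nonnesting, the only way $f$ can be incomparable to $e_r$ is that $f$ and $e_r$ \emph{cross}. So $f$ crosses all of $e_1,e_2,e_3$. Write $f=[a_f,b_f]$, $e_r=[a_r,b_r]$ with $b_1<a_2$, $b_2<a_3$. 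Crossing $e_1$ means $a_1<a_f<b_1<b_f$ or $a_f<a_1<b_f<b_1$; crossing $e_3$ similarly. I would show these constraints are jointly unsatisfiable: from $f$ crossing $e_1$, the opener $a_f$ lies in the open ``overlap window'' of $e_1$, which ends before $a_2\le b_2<a_3$; from $f$ crossing $e_3$, the closer $b_f$ lies past $a_3$; combining, $f$ must have $a_f<b_1<a_3<b_f$, i.e. $f$'s span strictly contains the separated pair $e_1,\dots$ wait — it must contain $b_1$ and $a_3$ but then it contains the whole arc $e_2=[a_2,b_2]$ (since $b_1<a_2\le b_2<a_3$), so $f$ \emph{nests} over $e_2$, contradicting nonnestingness. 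That is the crux: crossing both a left arc and a right arc of a separated chain of length $3$ forces nesting over the middle one.

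\textbf{Conclusion and anticipated obstacle.} Combining Steps 2 and 3, $\Omega(M)$ is $\mathbf{(3+1)}$-free iff $M$ is nonnesting iff $M\in\M_n(P_1)$, and since $\Omega$ is a bijection $\M_n\to\PP_n$, its restriction is a bijection $\M_n(P_1)\to\PP_n\mathbf{(3+1)}$. The main obstacle is \emph{bookkeeping the overlap cases} in Step~3: ``$f$ crosses $e_r$'' has two sub-cases (opener of $f$ inside the span of $e_r$, or opener of $e_r$ inside the span of $f$), so a priori there are $2^3=8$ combinations of how $f$ sits relative to $e_1,e_2,e_3$; I expect most are killed immediately by $b_1<a_2<b_2<a_3$, and the surviving ones all force $a_f<a_2$ and $b_f>b_2$, i.e. a nesting over $e_2$. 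One should also double-check the orientation of $P_1$ in Step~2 matches the picture in the excerpt (the dashed baseline and the specific arc crossings), but the pattern-occurrence argument from Lemma~\ref{lem-Nonnesting} already hands us exactly the configuration needed, so this is routine rather than delicate.
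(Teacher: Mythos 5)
Your proposal is correct, and for the harder direction it takes a genuinely different route from the paper. The paper proves the converse directly at the level of the pattern $P_1$: it pulls back a $\mathbf{3+1}$ occurrence to four arcs $m_1,m_2,m_3,m_4$, observes that if they do not already form a $P_1$ then (since $m_1,m_2,m_3$ must be a $3$-noncrossing) only three configurations are possible, and in each configuration uses Type~I/II-avoidance to manufacture auxiliary arcs $m'$, $m''$ completing an occurrence of $P_1$. You instead route everything through Lemma~\ref{lem-Nonnesting}, replacing ``contains $P_1$'' by ``contains a nesting,'' which turns the converse into a short, figure-free argument: an arc $f$ incomparable to each member of a separated triple $e_1<e_2<e_3$ must (in a nonnesting matching) cross each of them, and crossing $e_1$ forces $a_f<b_1$ while crossing $e_3$ forces $b_f>a_3$, so $f$ nests over $e_2$ --- a contradiction. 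This buys you a cleaner proof with no case analysis on configurations and no need to locate auxiliary arcs; what it costs is that the statement proved is the nesting/$\mathbf{(3+1)}$ equivalence rather than the $P_1$/$\mathbf{(3+1)}$ equivalence, so the theorem as stated is only recovered via Lemma~\ref{lem-Nonnesting} (which the paper has already established, so this is a legitimate dependency, and your forward direction correctly checks that the four arcs of a $P_1$ occurrence --- three pairwise separated arcs plus one arc crossing the outer two and nesting the middle one --- map to an induced $\mathbf{3+1}$). The $2^3$-case bookkeeping you worry about in Step~3 collapses exactly as you predict, since the ``wrong'' sub-case of crossing $e_1$ would place $f$ entirely to the left of $e_3$ and hence comparable to it.
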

	\begin{proof}
		Under the map $\Omega$, if $M\in\M_n$ contains an occurrence of the pattern $P_1$, then $\Omega(M)$ contains an occurrence of the pattern \textbf{3+1}. 
		Conversely, suppose $P \in \mathcal{P}_n$ contains an  occurrence of \textbf{3+1} with elements $p_1 < p_2 < p_3$ and $p_4$ incomparable to the others. Let $m_1$, $m_2$, $m_3$, and $m_4$ be the respective pre-images of $p_1$, $p_2$, $p_3$, and $p_4$ under $\Omega^{-1}$. If $m_1$, $m_2$, $m_3$, and $m_4$ form an occurrence of $P_1$, then we are done. Otherwise, since $m_1$, $m_2$, and $m_3$ must form a 3-noncrossing, only three cases are possible, which are presented schematically in Figure~\ref{fig-M_n(T1)-(2+2,3+1)-case2}. In each case, to avoid Type~I and Type~II arcs, we can find arcs $m'$ and $m''$ such that in the first (resp., second; third) situation, we obtain an occurrence of $P_1$ in $\Omega^{-1}(P)$ formed by the arcs $m'$, $m_1$, $m_3$, $m_4$ (resp., $m_1$, $m_2$, $m'$, $m_4$; $m'$, $m_1$, $m''$, $m_4$).  \end{proof}
	
	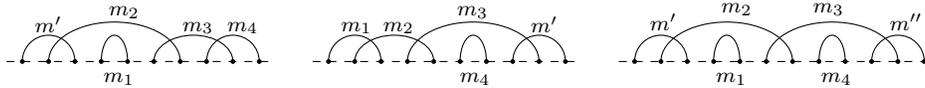
\begin{figure}
		\centering
		\begin{tikzpicture}[scale = 0.35]
			\draw [dashed](-0.6,0) -- (9.4,0);
			\foreach \x in {0,1,2,3,4,5,6,7,8,9}
			{
				\filldraw (\x,0) circle (2pt);
			}
			\draw[black] (0,0) arc (180:0:1 and 1);
			\draw[black] (1,0) arc (180:0:3.5 and 1.5);
			\draw[black] (3,0) arc (180:0:0.5 and 1);
			\draw[black] (5,0) arc (180:0:0.5 and 1);
			\draw[black] (7,0) arc (180:0:1 and 1);
			\node[anchor=north west] at (0.3, 2.1) {{\scriptsize $m'$}};
			\node[anchor=north west] at (2.7, -0.05) {{\scriptsize $m_{1}$}};
			\node[anchor=north west] at (3.8, 2.5) {{\scriptsize $m_{4}$}};
			\node[anchor=north west] at (4.6, -0.05) {{\scriptsize $m_{2}$}};
			\node[anchor=north west] at (7.2, 1.9) {{\scriptsize $m_{3}$}};
		\end{tikzpicture}
		\hspace{0.3cm}
		\begin{tikzpicture}[scale = 0.35]
			\draw [dashed](-0.6,0) -- (9.4,0);
			\foreach \x in {0,1,2,3,4,5,6,7,8,9}
			{
				\filldraw (\x,0) circle (2pt);
			}
			\draw[black] (0,0) arc (180:0:1 and 1);
			\draw[black] (1,0) arc (180:0:3.5 and 1.5);
			\draw[black] (3,0) arc (180:0:0.5 and 1);
			\draw[black] (5,0) arc (180:0:0.5 and 1);
			\draw[black] (7,0) arc (180:0:1 and 1);
			\node[anchor=north west] at (0.1, 1.9) {{\scriptsize $m_{1}$}};
			\node[anchor=north west] at (2.7, -0.05) {{\scriptsize $m_{2}$}};
			\node[anchor=north west] at (3.8, 2.5) {{\scriptsize $m_{4}$}};
			\node[anchor=north west] at (4.6, -0.05) {{\scriptsize $m_{3}$}};
			\node[anchor=north west] at (7.3, 2.1) {{\scriptsize $m'$}};
		\end{tikzpicture}
		\hspace{0.3cm}
		\begin{tikzpicture}[scale = 0.35]
			\draw [dashed](-0.6,0) -- (11.4,0);
			\foreach \x in {0,1,2,3,4,5,6,7,8,9,10,11}
			{
				\filldraw (\x,0) circle (2pt);
			}
			\draw[black] (0,0) arc (180:0:1 and 1);
			\draw[black] (1,0) arc (180:0:4.5 and 1.5);
			\draw[black] (3,0) arc (180:0:0.5 and 1);
			\draw[black] (5,0) arc (180:0:0.5 and 1);
			\draw[black] (7,0) arc (180:0:0.5 and 1);
			\draw[black] (9,0) arc (180:0:1 and 1);
			\node[anchor=north west] at (0.2, 2.2) {{\scriptsize $m'$}};
			\node[anchor=north west] at (2.7, -0.05) {{\scriptsize $m_{1}$}};
			\node[anchor=north west] at (4, 2.6) {{\scriptsize $m_{4}$}};
			\node[anchor=north west] at (4.6, -0.05) {{\scriptsize $m_{2}$}};
			\node[anchor=north west] at (6.7, -0.05) {{\scriptsize $m_{3}$}};
			\node[anchor=north west] at (9.3, 2.2) {{\scriptsize $m''$}};
		\end{tikzpicture}
		\caption{Related to the proof of Theorem~\ref{thm-P1-Poset3+1}.} 
		\label{fig-M_n(T1)-(2+2,3+1)-case2}
	\end{figure}
	
	\begin{thm} \label{thm-P2-N}
		The map $\Omega$, restricted to $\M_n(P_2)$, is also a bijection onto $\mathcal{P}_n{\bf {(N)}}$.
	\end{thm}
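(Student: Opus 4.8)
The plan is to establish, exactly as in the proof of Theorem~\ref{thm-P1-Poset3+1}, the equivalence: for $M\in\M_n$, the matching $M$ contains $P_2$ if and only if $\Omega(M)$ contains an induced copy of $\mathbf{N}$. Since $\Omega\colon\M_n\to\mathcal{P}_n$ is a bijection, this equivalence immediately yields that $\Omega$ restricts to a bijection from $\M_n(P_2)$ onto $\mathcal{P}_n\mathbf{(N)}$.

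For the forward implication, suppose arcs $[a_{i_1},b_{i_1}],[a_{i_2},b_{i_2}],[a_{i_3},b_{i_3}],[a_{i_4},b_{i_4}]$ of $M$ form an occurrence of $P_2$; then their eight endpoints appear in the order $a_{i_1}<a_{i_2}<b_{i_1}<a_{i_3}<b_{i_2}<a_{i_4}<b_{i_3}<b_{i_4}$. As the comparability relation defining $\Omega$ depends only on the relative order of openers and closers, the subposet of $\Omega(M)$ induced by $p_{i_1},p_{i_2},p_{i_3},p_{i_4}$ has exactly the relations $p_{i_1}<p_{i_3}$, $p_{i_1}<p_{i_4}$, $p_{i_2}<p_{i_4}$ and nothing else, so it is a copy of $\mathbf{N}$. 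Hence $M\in\M_n(P_2)$ implies $\Omega(M)\in\mathcal{P}_n\mathbf{(N)}$.

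For the converse, suppose $P=\Omega(M)$ contains $\mathbf{N}$ on elements $q_a<q_c$, $q_a<q_d$, $q_b<q_d$ with $\{q_a,q_b\}$, $\{q_b,q_c\}$, $\{q_c,q_d\}$ the incomparable pairs, and let $m_a,m_b,m_c,m_d$ be their preimages under $\Omega^{-1}$. Translating the relations, the closer of $m_a$ lies left of the openers of $m_c$ and $m_d$, the closer of $m_b$ lies left of the opener of $m_d$, and the pairs $\{m_a,m_b\}$, $\{m_b,m_c\}$, $\{m_c,m_d\}$ are mutually overlapping. A short inspection of the resulting endpoint inequalities shows that $m_b$ and $m_c$ must cross with $a_{m_b}<a_{m_c}<b_{m_b}<b_{m_c}$, and that the only ways $m_a,m_b,m_c,m_d$ can fail to form $P_2$ are that $m_a$ is nested inside $m_b$, that $m_d$ is nested inside $m_c$, or both. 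In the first situation, Type~I avoidance guarantees a point strictly between $a_{m_b}$ and $a_{m_a}$, and a short inductive argument on the number of points in $(a_{m_b},a_{m_a})$, again using Type~I avoidance, produces an arc $m'$ with $a_{m'}<a_{m_b}<b_{m'}<a_{m_c}$, so that $m'$ can play the role of the leftmost arc of $P_2$; the case where $m_d$ is nested inside $m_c$ is handled symmetrically, using Type~II avoidance to produce a replacement for $m_d$ on the right, and since these two repairs are independent they may be performed simultaneously. A figure analogous to Figure~\ref{fig-M_n(T1)-(2+2,3+1)-case2} records these cases. This proves $\Omega^{-1}(\mathcal{P}_n\mathbf{(N)})\subseteq\M_n(P_2)$ and completes the argument. (Alternatively, once the forward implication gives $\Omega(\M_n(P_2))\subseteq\mathcal{P}_n\mathbf{(N)}$, one may finish immediately by citing $|\M_n(P_2)|=C_n$ from Lemma~\ref{lem-P2-catalan} and $|\mathcal{P}_n\mathbf{(N)}|=C_n$, since an injection between finite sets of equal cardinality is a bijection.)

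The step I expect to be the main obstacle is the converse: enumerating precisely the handful of relative arrangements of $m_a,m_b,m_c,m_d$ compatible with the $\mathbf{N}$ relations and, in the arrangements that are not literally $P_2$, pinning down via Type~I/Type~II avoidance the arc that repairs the configuration. The forward implication and the counting alternative are routine.
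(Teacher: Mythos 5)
Your argument is correct and follows essentially the same route as the paper: the forward direction is immediate from the definition of $\Omega$, and the converse is handled by the same three-case analysis ($m_a$ nested in $m_b$, $m_d$ nested in $m_c$, or both) with repairs supplied by Type~I/Type~II avoidance, exactly as in the paper's proof and its Figure~\ref{fig-M_n(P2)-N-case2}. One small correction to your parenthetical alternative: the forward implication actually yields $\mathcal{P}_n\mathbf{(N)}\subseteq\Omega(\M_n(P_2))$ rather than the reverse containment, but since both sets have cardinality $C_n$ that counting shortcut still closes the proof.
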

	\begin{proof}
		If $M\in\M_n$ contains an occurrence of the pattern $P_2$, then it is clear that $\Omega(M)$ contains an occurrence of the pattern \textbf{N}.
		To prove the converse, suppose $P \in \mathcal{P}_n$ contains an occurrence of the pattern \textbf{N} formed by elements $p_1$, $p_2$, $p_3$, and $p_4$, where $p_1 < p_3$, $p_1 < p_4$, $p_2 < p_4$ and all the other pairs are incomparable. Let $m_1$, $m_2$, $m_3$, and $m_4$ be the respective pre-images of $p_1$, $p_2$, $p_3$, and $p_4$ under $\Omega^{-1}$. Similar to the proof of Theorem~\ref{thm-P1-Poset3+1}, if $m_1$, $m_2$, $m_3$, and $m_4$ form an occurrence of $P_2$, then we are done. Otherwise, there are three possible cases, as shown in Figure~\ref{fig-M_n(P2)-N-case2}. In each of these cases, we can find two arcs, $m'$ and $m''$, that are involved  in an occurrence of $P_2$ in $\Omega^{-1}(P)$. 
	\end{proof}
	
	\begin{figure}[h]
		\centering
		\begin{tikzpicture}[scale = 0.35]
			\draw [dashed](-0.6,0) -- (9.4,0);
			\foreach \x in {0,1,2,3,4,5,6,7,8,9}
			{
				\filldraw (\x,0) circle (2pt);
			}
			\draw[black] (0,0) arc (180:0:1 and 1);
			\draw[black] (1,0) arc (180:0:2.5 and 1.5);
			\draw[black] (3,0) arc (180:0:0.5 and 1);
			\draw[black] (5,0) arc (180:0:1.5 and 1);
			\draw[black] (7,0) arc (180:0:1 and 1);
			\node[anchor=north west] at (0.1, 2.1) {{\scriptsize $m'$}};
			\node[anchor=north west] at (2.7, -0.05) {{\scriptsize $m_{1}$}};
			\node[anchor=north west] at (2.9, 2.5) {{\scriptsize $m_{2}$}};
			\node[anchor=north west] at (5.7, 1.9) {{\scriptsize $m_{3}$}};
			\node[anchor=north west] at (7.4, 1.9) {{\scriptsize $m_{4}$}};
		\end{tikzpicture}
		\hspace{0.3cm}
		\begin{tikzpicture}[scale = 0.35]
			\draw[dashed] (-0.6,0) -- (9.4,0);
			\foreach \x in {0,1,2,3,4,5,6,7,8,9}
			{
				\filldraw (\x,0) circle (2pt);
			}
			\draw[black] (0,0) arc (180:0:1 and 1);
			\draw[black] (1,0) arc (180:0:1.5 and 1);
			\draw[black] (3,0) arc (180:0:2.5 and 1.5);
			\draw[black] (5,0) arc (180:0:0.5 and 1);
			\draw[black] (7,0) arc (180:0:1 and 1);
			\node[anchor=north west] at (0.1, 1.9) {{\scriptsize $m_{1}$}};
			\node[anchor=north west] at (1.7, 1.9) {{\scriptsize $m_{2}$}};
			\node[anchor=north west] at (4.5, 2.5) {{\scriptsize $m_{3}$}};
			\node[anchor=north west] at (4.6, -0.05) {{\scriptsize $m_{4}$}};
			\node[anchor=north west] at (7.3, 2.1) {{\scriptsize $m'$}};
		\end{tikzpicture}
		\hspace{0.3cm}
		\begin{tikzpicture}[scale = 0.35]
			\draw [dashed](-0.6,0) -- (11.4,0);
			\foreach \x in {0,1,2,3,4,5,6,7,8,9,10,11}
			{
				\filldraw (\x,0) circle (2pt);
			}
			\draw[black] (0,0) arc (180:0:1 and 1);
			\draw[black] (1,0) arc (180:0:2.5 and 1.5);
			\draw[black] (3,0) arc (180:0:0.5 and 1);
			\draw[black] (5,0) arc (180:0:2.5 and 1.5);
			\draw[black] (7,0) arc (180:0:0.5 and 1);
			\draw[black] (9,0) arc (180:0:1 and 1);
			\node[anchor=north west] at (0.2, 2.2) {{\scriptsize $m'$}};
			\node[anchor=north west] at (2.7, -0.05) {{\scriptsize $m_{1}$}};
			\node[anchor=north west] at (2.9, 2.6) {{\scriptsize $m_{2}$}};
			\node[anchor=north west] at (6.4, 2.6) {{\scriptsize $m_{3}$}};
			\node[anchor=north west] at (6.7, -0.05) {{\scriptsize $m_{4}$}};
			\node[anchor=north west] at (9.3, 2.2) {{\scriptsize $m''$}};
		\end{tikzpicture}
		\caption{Related to the proof of Theorem~\ref{thm-P2-N}.} 
		\label{fig-M_n(P2)-N-case2}
	\end{figure}
	
	\subsection{Restriction of $\Psi$ on Stoimenow matchings yielding  $\A_n(101)$}\label{sec-matching-101}
	
	Duncan and Steingr\'{\i}msson~\cite{Duncan-Steingrimsson} proved that $|\A_n(101)|=C_n$, and  Bousquet-M\'elou \etal ~\cite{Bousquet-Claesson-Dukes-Kitaev}
	established a bijection $\mathcal{O}\colon\PP_n\to\A_n$. Dukes and McNamara~\cite{Dukes-McNamara} further studied the restricted bijection between $\PP_n(\mathbf{N})$ and $\A_n(101)$.
	We consider the bijection $\Psi=\mathcal{O} \circ \Omega: \mathcal{M}_n \to \mathcal{A}_n$, 
	and note that a direct bijection from $\mathcal{M}_n$ to $\mathcal{A}_n$ was given by Claesson et al.~\cite{Claesson-Dukes-Kitaev}. 
	Since the description of $\mathcal{O}$ is rather lengthy, we omit it here and refer the reader to~\cite{Bousquet-Claesson-Dukes-Kitaev} for details.
	
	\begin{lem}[\cite{Dukes-McNamara},  Proposition 5.3]\label{lem-cite-N-101}
		The restriction of the map $\mathcal{O}$ to $\PP_n{\bf{(N)}}$ is also a bijection onto $\A_n(101)$.
	\end{lem}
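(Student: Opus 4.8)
This is Proposition~5.3 of~\cite{Dukes-McNamara}, so one may simply invoke that reference; below I outline the argument one would give to reprove it. Since $\mathcal{O}\colon\PP_n\to\A_n$ is already known, from~\cite{Bousquet-Claesson-Dukes-Kitaev}, to be a bijection, the restriction claim is equivalent to the single assertion that, for every $P\in\PP_n$, the poset $P$ contains an induced copy of $\mathbf{N}$ if and only if the ascent sequence $\mathcal{O}(P)=(\alpha_1,\dots,\alpha_n)$ contains an occurrence of the pattern $101$, that is, indices $i<j<k$ with $\alpha_j<\alpha_i=\alpha_k$. Granting this equivalence, $\mathcal{O}$ carries $\PP_n(\mathbf{N})$ bijectively onto $\A_n(101)$; the known evaluations $|\PP_n(\mathbf{N})|=C_n=|\A_n(101)|$ serve only as a consistency check.

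The correspondence is read off from the level structure underlying $\mathcal{O}$. In a $\mathbf{(2+2)}$-free poset the strict down-sets $D(x)=\{y:y<x\}$ form a chain under inclusion, $\emptyset=D_0\subsetneq D_1\subsetneq\cdots\subsetneq D_m$, and dually the up-sets form a chain; $\mathcal{O}$ passes from $P$ to $\mathcal{O}(P)$ through this level filtration, the entries of $\mathcal{O}(P)$ encoding, roughly, the levels occupied by successive elements relative to the part of $P$ already processed, so that equal entries reflect elements sharing a down-set when both are present and a strictly smaller entry reflects an element sitting on a strictly lower level. For $\mathbf{N}$, whose relations are $a<c$, $a<d$, $b<d$ with all other pairs incomparable, one has $D(a)=D(b)=\emptyset\subsetneq D(c)=\{a\}\subsetneq D(d)=\{a,b\}$ and $U(c)=U(d)=\emptyset\subsetneq U(b)=\{d\}\subsetneq U(a)=\{c,d\}$: the elements $a,b$ lie on the bottom level while $c,d$ straddle them on two consecutive higher levels. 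Following these four elements through the filtration, one shows that two of them land at positions carrying a common entry separated by the position of a third carrying a strictly smaller entry, producing a $101$.

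The converse reverses this reading: an occurrence $\alpha_j<\alpha_i=\alpha_k$ with $i<j<k$ says that the element processed at step $j$ sits on a level strictly below a level shared by those processed at steps $i$ and $k$; because $k>j$, the step-$k$ element is inserted when the step-$j$ element is already present, so its down-set genuinely contains an element on the low level, and together with a witness on the common level this yields four elements forming an induced $\mathbf{N}$ in $P$. I expect the main obstacle to be the bookkeeping common to both directions: each entry $\alpha_i$ records a level relative to the partial poset seen at step $i$, not relative to all of $P$, so one must verify that the comparabilities and incomparabilities certifying an induced $\mathbf{N}$ — and, conversely, the equalities and strict inequalities certifying a $101$ — survive to the final object rather than being artefacts of an intermediate stage. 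Ruling out the degenerate configurations is precisely where $\mathbf{(2+2)}$-freeness and the total order of the down-sets enter, via a short case analysis in the spirit of the proofs of Theorems~\ref{thm-P1-Poset3+1} and~\ref{thm-P2-N}.
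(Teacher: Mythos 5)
Your proposal matches the paper exactly: the paper offers no proof of this lemma beyond citing it as Proposition~5.3 of~\cite{Dukes-McNamara}, and invoking that reference, as you do in your first sentence, is all that is required. The sketch you append is a reasonable outline of how the cited result is established (via the level filtration of strict down-sets underlying $\mathcal{O}$), but it is only an outline with the key bookkeeping acknowledged rather than carried out, so the citation, not the sketch, is doing the work here.
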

	
	\begin{thm}\label{thm-P2-101}
		The map $\Psi$, restricted to $\M_n(P_2)$, is also a bijection onto $\A_n(101)$.
	\end{thm}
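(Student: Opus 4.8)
The plan is to factor $\Psi$ through the intermediate layer of $\mathbf{(2+2)}$-free posets and simply compose the two bijection results that have already been proved. Recall that $\Psi = \mathcal{O}\circ\Omega$, where $\Omega\colon\M_n\to\PP_n$ is the Bousquet-M\'elou \etal\ bijection and $\mathcal{O}\colon\PP_n\to\A_n$ is the poset-to-ascent-sequence bijection. By Theorem~\ref{thm-P2-N}, the restriction $\Omega|_{\M_n(P_2)}$ is a bijection onto $\PP_n(\mathbf{N})$; in particular $\Omega\bigl(\M_n(P_2)\bigr)=\PP_n(\mathbf{N})$ as sets. By Lemma~\ref{lem-cite-N-101}, the restriction $\mathcal{O}|_{\PP_n(\mathbf{N})}$ is a bijection onto $\A_n(101)$.

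The argument I would then give is a one-liner: since $\Omega$ carries $\M_n(P_2)$ exactly onto $\PP_n(\mathbf{N})$, the restriction of the composite $\mathcal{O}\circ\Omega$ to $\M_n(P_2)$ coincides with $\bigl(\mathcal{O}|_{\PP_n(\mathbf{N})}\bigr)\circ\bigl(\Omega|_{\M_n(P_2)}\bigr)$, which is a composition of two bijections and hence a bijection from $\M_n(P_2)$ onto $\A_n(101)$. No fresh combinatorial work on pattern occurrences is needed, because the translations $P_2\leftrightarrow\mathbf{N}$ and $\mathbf{N}\leftrightarrow 101$ are precisely the content of Theorem~\ref{thm-P2-N} and of Dukes--McNamara's Proposition~5.3 (Lemma~\ref{lem-cite-N-101}).

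There is essentially no obstacle here: the only point to verify is the set-level equality $\Omega(\M_n(P_2))=\PP_n(\mathbf{N})$, and that is exactly Theorem~\ref{thm-P2-N}. If one wanted a proof that does not route through Lemma~\ref{lem-cite-N-101}, one could instead track directly, through the (lengthy) description of $\mathcal{O}$, how an induced $\mathbf{N}$ in a $\mathbf{(2+2)}$-free poset forces the subword pattern $101$ in the corresponding ascent sequence and conversely; but since that correspondence is precisely the cited proposition, invoking it is both cleaner and shorter, and that is the route I would take.
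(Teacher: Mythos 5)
Your proposal is correct and is exactly the paper's argument: the paper's proof of this theorem is the one-line observation that it follows from Theorem~\ref{thm-P2-N} together with Lemma~\ref{lem-cite-N-101}, i.e., composing the restricted bijections $\Omega|_{\M_n(P_2)}\colon\M_n(P_2)\to\PP_n(\mathbf{N})$ and $\mathcal{O}|_{\PP_n(\mathbf{N})}\colon\PP_n(\mathbf{N})\to\A_n(101)$. Nothing further is needed.
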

	\begin{proof}
		The proof follows from Theorem~\ref{thm-P2-N} and Lemma~\ref{lem-cite-N-101}.
	\end{proof}
	
	\subsection{Restriction of $\Upsilon$ on Stoimenow matchings yielding  $\F_n(3142)$}\label{subsec-matching-perm}
	
	Gil and Weiner~\cite{Gil-Weiner} showed that $\F_n(3142)$, the set of Fishburn permutations of length $n$ avoiding the pattern $3142$, is counted by $C_n$. In order to show that the pre-image of $\F_n(3142)$ under the bijection $\Upsilon$ is given by $\M(P_2)$ (see Theorem~\ref{thm-MnP2-Fn3142}), we need to consider ascent sequences as an intermediate object.
	
	The set of \emph{modified ascent sequences}
	of length $n$, denoted \( \mathcal{\hat{A}}_n \), forms the bijective image of \( \mathcal{A}_n \) under the mapping \( \Delta : \alpha \mapsto \hat{\alpha} \) (we follow the description in~\cite{Cerbai-Claesson}).
	Given an ascent sequence \( \alpha = (\alpha_1, \alpha_2, \ldots, \alpha_n) \in \mathcal{A}_n \) and $1\leq j\leq n-1$, let
	\[
	M(\alpha, j) = \alpha^+, \quad \text{where } \alpha^+(i) = 
	\begin{cases} 
		\alpha_i + 1 & \text{if } i < j \text{ and } \alpha_i \geq \alpha_{j+1}, \\
		\alpha_i & \text{otherwise}.
	\end{cases}
	\]
	We extend the definition of \( M \) to multiple indices \( j_1, j_2, \ldots, j_k \) by
	\[
	M(\alpha, j_1, j_2, \ldots, j_k) = M\left(M(\alpha, j_1, \ldots, j_{k-1}), j_k\right).
	\]
	Denote by \( \operatorname{Asc}(\alpha) \)  the ascent list of \( \alpha \), i.e., the ascent set of $\alpha$ written in increasing order. Let \( \Delta(\alpha) = M(\alpha, \operatorname{Asc}(\alpha)) \). As an example, consider the ascent sequence \( \alpha = (0, 1, 0, 1, 3, 1, 1, 2) \). Then \( \operatorname{Asc}(\alpha) = (1, 3, 4, 7) \), and the map \( \Delta \) computes the modified ascent sequence \( \hat{\alpha} \) as follows:
	\[
	\begin{array}{r@{\;}l}
		\alpha =&(0,\underline{1},0,1,3,1,1,2)  \\
		&(0,1,0,\underline{1},3,1,1,2)\\
		&(0,2,0,1,\underline{3},1,1,2) \\
		&(0,2,0,1,3,1,1,\underline{2}) \\
		&(0,3,0,1,4,1,1,2) = \hat{\alpha} \\
	\end{array}
	\]
	Let $\mathrm{id}_n = 1\,2\,\cdots\,n$ be the identity permutation of length $n$.  
	The transposition operation \( \mathcal{T} : \hat{\mathcal{A}}_n \to \mathcal{F}_n \) is defined on biwords as follows~\cite{Cerbai-Claesson}:
	\[
	{\binom{\mathrm{id}}{\hat{\alpha}}}^{\mathcal{T}} = \binom{\mathrm{sort}(\hat{\alpha})}{\mathcal{T}(\hat{\alpha})},
	\]
	where $\mathrm{sort}(\hat{\alpha})$ is obtained by sorting the entries of $\hat{\alpha}$ in non-decreasing order.
	For example, if $\hat{\alpha}= (0,3,0,1,4,1,1,2)$, then
	\[
	{\binom{\text{id}}{\hat{\alpha}}}^{\T} = {\binom{1\,2\,3\,4\,5\,6\,7\,8}{0\,3\,0\,1\,4\,1\,1\,2}}^{\T} = \binom{0\,0\,1\,1\,1\,2\,3\,4}{3\,1\,7\,6\,4\,8\,2\,5} = \binom{\text{sort}(\hat{\alpha})}{\T(\hat{\alpha})}.
	\]
	Thus, $\pi = \T(\hat{\alpha})= 31764825$.
	The mapping $\Lambda: \A_n \to \F_n$ is the composition $\Lambda = \T \circ \Delta$. Finally, the mapping $\Upsilon: \M_n \to \F_n$ is given by the composition $\Upsilon := \Lambda \circ \Psi = \T \circ \Delta \circ \Psi$. 
	An example is shown in Figure~\ref{fig-exa-matching-perm}: for the leftmost matching $M$, we have $\Upsilon(M)= 31764825$.
	
	\begin{figure}[h]
		\centering
		\begin{tikzpicture}[scale = 0.4, baseline={(current bounding box.center)}]
			\draw (-0.3,0) -- (15.3,0);
			\foreach \x in {0,1,2,3,4,5,6,7,8,9, 10,11, 12,13,14,15}
			{
				\filldraw (\x,0) circle (2pt);
			}
			\draw[black] (0,0) arc (180:0:1 and 1);
			\draw[black] (1,0) arc (180:0:2.5 and 1.2);
			\draw[black] (3,0) arc (180:0:2.5 and 1.2);
			\draw[black] (4,0) arc (180:0:2.5 and 1.2);
			\draw[black] (5,0) arc (180:0:3 and 1.2);
			\draw[black] (7,0) arc (180:0:3.5 and 1.2);
			\draw[black] (10,0) arc (180:0:1 and 1);
			\draw[black] (13,0) arc (180:0:1 and 1);
			\node[anchor=north west] at (6.6, -0.8) {\scriptsize{$M$}};
			\node[anchor=north west] at (-0.6, -0.05) {\scriptsize{1}};
			\node[anchor=north west] at (0.4, -0.05) {\scriptsize{2}};
			\node[anchor=north west] at (1.4, -0.05) {\scriptsize{3}};
			\node[anchor=north west] at (2.4, -0.05) {\scriptsize{4}};
			\node[anchor=north west] at (3.4, -0.05) {\scriptsize{5}};
			\node[anchor=north west] at (4.4, -0.05) {\scriptsize{6}};
			\node[anchor=north west] at (5.4, -0.05) {\scriptsize{7}};
			\node[anchor=north west] at (6.4, -0.05) {\scriptsize{8}};
			\node[anchor=north west] at (7.4, -0.05) {\scriptsize{9}};
			\node[anchor=north west] at (8.2, -0.05) {\scriptsize{10}};
			\node[anchor=north west] at (9.2, -0.05) {\scriptsize{11}};
			\node[anchor=north west] at (10.2, -0.05) {\scriptsize{12}};
			\node[anchor=north west] at (11.2, -0.05) {\scriptsize{13}};
			\node[anchor=north west] at (12.2, -0.05) {\scriptsize{14}};
			\node[anchor=north west] at (13.2, -0.05) {\scriptsize{15}};
			\node[anchor=north west] at (14.2, -0.05) {\scriptsize{16}};
		\end{tikzpicture}
		\hspace{-0.1cm}
		\raisebox{1ex}{$\xlongrightarrow{\Omega}$}
		\hspace{-0.3cm}
		\begin{tikzpicture}[scale=0.4,
			node/.style={circle, fill=black, inner sep=0pt, minimum size=2.5pt},
			node distance = 1.5cm,
			baseline={(current bounding box.center)}
			]
			\node[node, label=left:\scriptsize{$p_1$}] (p1) at (-0.5, -2) {};
			\node[node, label={[xshift=0.1cm]left:\scriptsize{$p_2$}}] (p2) at (2, -2) {};
			\node[node, label={[xshift=-0.1cm]below:{\scriptsize $p_3$}}] (p3) at (-2.5,-0.5) {};
			\node[node, label={[xshift=0.1cm]left:{\scriptsize $p_4$}}] (p4) at (-1,-0.5) {};
			\node[node, label={[xshift=0.1cm]left:{\scriptsize $p_5$}}] (p5) at (0.5, -0.5) {};
			\node[node, label={[yshift=-0.1cm]above:\scriptsize{$p_6$}}]  (p6) at (2, 0.5) {};
			\node[node, label=left:\scriptsize{$p_7$}] (p7) at (-0.5, 1.5) {};
			\node[node, label=right:\scriptsize{$p_8$}] (p8) at (1, 2.5) {};
			\draw (p1) -- (p3);
			\draw (p1) -- (p4);
			\draw (p1) -- (p5);
			\draw (p1) -- (p6);
			\draw (p2) -- (p6);
			\draw (p2) -- (p7);
			\draw (p3) -- (p7);
			\draw (p4) -- (p7);
			\draw (p5) -- (p8);
			\draw (p7) -- (p8);
			\node[anchor=north west] at (-1.2, -2.4) {\scriptsize{$P$}};
			\node[anchor=north west] at (-0.5,-2.6) {\rotatebox{-90}{$\xlongrightarrow{}$}};
			\node at (0.7,-3.7) {\scriptsize$\mathcal{O}$};
		\end{tikzpicture}
		\\
		\vspace{-0.2cm}
		\hspace{3.5cm}
		\begin{tikzpicture}
			\node[anchor=north west] at (4.5,-0.9) {$\alpha=(0,1,0,1,3,1,1,2)$};
			\node[anchor=north west] at (3.6,-0.8) {$\xlongleftarrow{\scriptstyle \Lambda}$};
			\node[anchor=north west] at (1,-1) {$\pi=31764825$};
		\end{tikzpicture}
		\vspace{-0.1cm}
		\caption{An example of the applications of the bijections between the four Fishburn structures related to Section~\ref{sec-bijection}.}
		\label{fig-exa-matching-perm}
	\end{figure}
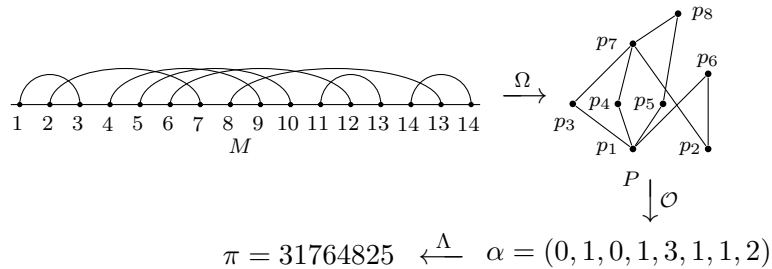    
	
	\begin{lem}[\cite{Duncan-Steingrimsson}, Theorem 2.6]\label{lem-cite-101}
		Any $\alpha \in \A_n(101)$ is a restricted growth function (RGF) and encodes precisely all noncrossing partitions.
	\end{lem}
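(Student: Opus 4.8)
This is the cited theorem of Duncan and Steingr\'{\i}msson; here is the argument I would give. The plan is to show that $\A_n(101)$ consists exactly of the restricted growth functions (RGFs) of length $n$ whose associated set partition is noncrossing. Recall that an RGF of length $n$ is a word $\alpha_1\cdots\alpha_n$ with $\alpha_1=0$ and $\alpha_i\le 1+\max(\alpha_1,\dots,\alpha_{i-1})$ for all $i$, and that such words biject with set partitions of $[n]$ via $i\sim j\iff\alpha_i=\alpha_j$ (the blocks being listed in increasing order of their minima). Since noncrossing partitions of $[n]$ are counted by $C_n$, the identification above proves the lemma and simultaneously recovers $|\A_n(101)|=C_n$. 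I would carry it out in three steps.

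First I would show that every $101$-avoiding ascent sequence is an RGF. If some $\alpha\in\A_n$ were not an RGF, take the least $i$ with $\alpha_i\ge m+2$, where $m=\max(\alpha_1,\dots,\alpha_{i-1})$; the ascent-sequence bound forces $\asc(\alpha_1,\dots,\alpha_{i-1})\ge m+1$, while every ascent of $\alpha_1\cdots\alpha_{i-1}$ has a ``top'' $\alpha_{j+1}$ lying in $\{1,\dots,m\}$. Pigeonhole then produces two ascents $j_1<j_2$ with the same top $v$; one checks that $j_1+1<j_2$ (otherwise $\alpha_{j_2}=v$ contradicts $j_2$ being an ascent with top $v$), and the positions $j_1+1<j_2<j_2+1$ then carry a $101$ occurrence, a contradiction. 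Conversely, every RGF is automatically an ascent sequence: for a prefix $\alpha_1\cdots\alpha_j$ with maximum $m$, the positions immediately preceding the first occurrences of the values $1,\dots,m$ are $m$ distinct ascents, so $\asc(\alpha_1,\dots,\alpha_j)\ge m$ and the ascent bound is implied by the RGF bound. Together these give that $\A_n(101)$ is precisely the set of $101$-avoiding RGFs of length $n$.

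The third step characterizes which RGFs avoid $101$. Let $\alpha$ be an RGF with partition $\pi$. A crossing $a<b<c<d$ with $\alpha_a=\alpha_c=u\neq w=\alpha_b=\alpha_d$ immediately yields a $101$: among $(\alpha_a,\alpha_b,\alpha_c)$ if $u>w$, and among $(\alpha_b,\alpha_c,\alpha_d)$ if $u<w$. Conversely, a $101$ at positions $i<j<k$ with $\alpha_i=\alpha_k=v>\alpha_j=w$ yields a crossing: taking $a$ to be the first occurrence of $w$ (which precedes the first occurrence of $v$, hence $a<i$) gives $a<i<j<k$ with $a,j$ in the block of $w$ and $i,k$ in the block of $v$. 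Hence $\alpha$ avoids $101$ if and only if $\pi$ is noncrossing, so $\alpha\mapsto\pi$ restricts to a bijection from $\A_n(101)$ onto the noncrossing partitions of $[n]$, completing the argument.

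I expect the first step to be the main obstacle: it is the only place where the ascent bound built into the definition of $\A_n$ must interact with $101$-avoidance, and the pigeonhole argument on ascent tops is the one non-routine idea. Steps two and three are straightforward case checks once the RGF/set-partition dictionary is in hand.
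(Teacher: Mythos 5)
The paper does not prove this lemma; it is quoted from Duncan and Steingr\'{\i}msson \cite{Duncan-Steingrimsson}, so there is no internal proof to compare against. Your argument is correct and self-contained: the pigeonhole step (more ascents than the prefix maximum forces two ascents with the same top, hence a $101$), the observation that every RGF is an ascent sequence because each new maximal value contributes a fresh ascent, and the crossing/$101$ dictionary are all sound, and this is essentially the standard route to the cited theorem.
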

	Lemma~\ref{lem-cite-101} states that the entries of $\alpha$ decompose into maximal weakly increasing runs, 
	whose minima exceed the preceding maxima by $1$, 
	separated by weakly decreasing segments whose elements are strictly smaller than the preceding maximum. 
	For example, for $\alpha = 00123314566744110889$, the weakly increasing runs are $001233$, $45667$, and $889$.
	
	\begin{thm}\label{thm-MnP2-Fn3142}
		When applied to $\M_n(P_2)$, the map $\Upsilon$ establishes a bijection with $\F_n(3142)$.
	\end{thm}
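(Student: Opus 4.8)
The plan is to exploit the factorization $\Upsilon=\Lambda\circ\Psi$ with $\Lambda=\T\circ\Delta$, together with Theorem~\ref{thm-P2-101}, which already identifies $\Psi(\M_n(P_2))=\A_n(101)$. Since $\Lambda\colon\A_n\to\F_n$ is a bijection, $\Upsilon$ restricted to $\M_n(P_2)$ is automatically injective, with image $\Lambda(\A_n(101))$; and since $|\A_n(101)|=C_n=|\F_n(3142)|$ by \cite{Duncan-Steingrimsson} and \cite{Gil-Weiner}, it suffices to prove the single containment $\Lambda(\A_n(101))\subseteq\F_n(3142)$. So I would fix $\alpha\in\A_n(101)$ and show that $\T(\Delta(\alpha))$ avoids $3142$.

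First I would prove that $\Delta$ is the identity on $\A_n(101)$. Using the structural description recorded after Lemma~\ref{lem-cite-101}, write $\alpha=R_1D_1R_2D_2\cdots$ as alternating maximal weakly increasing runs $R_i$ and weakly decreasing segments $D_i$, where $\min R_{i+1}$ is one more than the running maximum (that is, than $\max R_i$) and every entry of $D_i$ is strictly below $\max R_i$. Every ascent position $j$ of $\alpha$ lies either inside some run or at the last position of some $D_i$ (the ascent into $R_{i+1}$), so $\alpha_{j+1}$ is either an entry of a run or the opening value $\min R_{i+1}$ of a run; in both cases $\alpha_{j+1}$ strictly exceeds every entry occurring before position $j$. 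Hence the modification step $M(\alpha,j)$ — which only raises entries $\alpha_i$ with $i<j$ and $\alpha_i\ge\alpha_{j+1}$ — leaves $\alpha$ unchanged, so $\Delta(\alpha)=M(\alpha,\operatorname{Asc}(\alpha))=\alpha$ and $\Lambda(\alpha)=\T(\alpha)$.

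Next I would translate $3142$-containment through $\T$. From the definition, $\pi=\T(\hat\alpha)$ is obtained by listing, for each value $v=0,1,2,\dots$ in increasing order, the positions of $v$ in $\hat\alpha$ in \emph{decreasing} order (this is consistent with the worked example, in which ties in $\operatorname{sort}(\hat\alpha)$ are broken by decreasing position). Unwinding this, $\pi$ contains $3142$ if and only if $\hat\alpha$ has indices $p_1<p_2<p_3<p_4$ with $\hat\alpha_{p_3}\le\hat\alpha_{p_1}<\hat\alpha_{p_4}\le\hat\alpha_{p_2}$, the two weak inequalities coming from the decreasing tie-break and the strict one from $p_1<p_4$. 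Applying this with $\hat\alpha=\alpha\in\A_n(101)$: if such indices existed, set $x=\alpha_{p_4}$; since $\alpha$ is an RGF the values $0,1,2,\dots$ first appear in increasing order, so the first occurrence of $x$ is at or before the first occurrence of $\alpha_{p_2}\ge x$, hence at a position $\le p_2<p_3$. Then $\alpha$ takes the value $x$ at that first occurrence and again at $p_4$, while at the intermediate position $p_3$ it takes the value $\alpha_{p_3}\le\alpha_{p_1}<x$; this is an occurrence of $101$ in $\alpha$, a contradiction. Hence $\alpha$ (equivalently $\hat\alpha=\alpha$) has no such configuration, so $\Lambda(\alpha)=\T(\alpha)$ avoids $3142$, which together with the reduction in the first paragraph completes the proof.

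The main obstacle I anticipate is the bookkeeping in the middle two steps: proving rigorously from the run/segment structure of Lemma~\ref{lem-cite-101} that the "jump value" $\alpha_{j+1}$ at every ascent dominates the whole prefix before $j$ (so that each $M(\alpha,j)$ is trivial), and pinning down the sorting and tie-breaking in the definition of $\T$ precisely enough that the weak/strict pattern $\hat\alpha_{p_3}\le\hat\alpha_{p_1}<\hat\alpha_{p_4}\le\hat\alpha_{p_2}$ comes out exactly right. Once those are in place, the restricted-growth-function argument in the last step is short.
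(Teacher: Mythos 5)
Your proposal is correct, and its skeleton coincides with the paper's: both reduce via Theorem~\ref{thm-P2-101} to showing $\Lambda(\A_n(101))=\F_n(3142)$, and both hinge on the observation that $\Delta$ acts as the identity on $\A_n(101)$ (your run/segment analysis of why each step $M(\alpha,j)$ is trivial is exactly the detail the paper compresses into ``one can see''). Where you genuinely diverge is in the last step: the paper simply cites the Cerbai--Claesson result that $\T$ restricts to a bijection from $\hat{\A}_n(0101)$ onto $\F_n(3142)$ (together with $\A_n(0101)=\A_n(101)$ from Duncan--Steingr\'{\i}msson), whereas you prove the needed containment yourself --- unwinding the tie-breaking in $\T$ to show that a $3142$ in $\T(\hat\alpha)$ forces indices $p_1<p_2<p_3<p_4$ with $\hat\alpha_{p_3}\le\hat\alpha_{p_1}<\hat\alpha_{p_4}\le\hat\alpha_{p_2}$, and then extracting a $101$ via the RGF property --- and you close surjectivity by the count $|\A_n(101)|=C_n=|\F_n(3142)|$. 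Your characterization of $3142$-containment checks out (the strict inequality sits exactly where $p_1<p_4$ forces it, the weak ones where the decreasing tie-break does), so the trade-off is clean: the paper's proof is shorter but outsources the key bijection to a table in another paper, while yours is self-contained at the cost of the explicit tie-break bookkeeping and an appeal to the two enumeration results already quoted in Sections~\ref{sec-matching-101} and~\ref{subsec-matching-perm}.
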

	\begin{proof}
		By Theorem~\ref{thm-P2-101}, the fact that $\A_n(0101)=\A_n(101)$ shown in \cite[Theorem 2.5]{Duncan-Steingrimsson}, 
		and the bijection $\T\colon \hat{\A}_n(0101) \to \F_n(3142)$ from~\cite[Table 2]{Cerbai-Claesson}, 
		it suffices to prove that $\Delta$ is a bijection from $\A_n(0101)$ onto $\hat{\A}_n(0101)$. 
		However, from the definition of $\Delta$ and Lemma~\ref{lem-cite-101}, 
		one can see that $\Delta(\alpha)=\alpha$ for any $\alpha \in \A_n(0101)$, 
		so that $\Delta$ restricted to $\A_n(0101)$ is the identity map, which means  a stronger statement holds, namely, $\A_n(0101) = \hat{\A}_n(0101)$.
		The proof is complete.
	\end{proof}
	
	\section{Statistics and their distributions over pattern-avoiding Fishburn structures}\label{sec-stats}
	In this section, we investigate statistics and distributions over Fishburn subsets enumerated by the Catalan numbers, which were considered in Section~\ref{sec-bijection}.
	In Section~\ref{subsec-decomp}, we present structural decompositions of these sets under their bijections.
	In Section~\ref{stats-St-matchings-subsec}, we establish  several equidistributed statistics and prove Theorems~\ref{thm-equi-cr-nr-P1}--\ref{thm-P1-P2-joint}.
	
	\subsection{The decompositions of Fishburn subsets enumerated by  Catalan numbers}\label{subsec-decomp}
	In this subsection, we introduce decompositions of the Fishburn subsets, providing combinatorial interpretations of the convolution recurrence (\ref{Catalan-rec}) for the Catalan numbers.  Conversely, we also illustrate how two smaller objects can be combined to form a larger one. Note that the decomposition for $M \in \mathcal{M}_n(P_2)$ has already been discussed in Section~\ref{P2-sec}.
	
	\textbf{The decomposition of $ M \in \mathcal{M}_n(P_1)$.} 
	Given a matching $M \in \mathcal{M}_n(P_1)$, let $M^*$ be the first irreducible block of $M$, and let $M_2$ be the matching consisting of the remaining arcs. Clearly, both $M^*$ and $M_2$ are $P_1$-avoiding.
	We construct a matching $M_1$ in the following way. Remove the first opener and the last closer from $M^*$. Among the remaining vertices, pair the openers and closers in their natural left-to-right order; that is, pair the $i$th remaining opener with the $i$th remaining closer for $1\le i\le |M^*|-1$.
	Finally, relabel the remaining vertices increasingly by \(1,2,\ldots,2(k-1)\), or equivalently, take the reduced form of the resulting matching.
	The resulting matching $M_1$ is nonnesting and therefore, by Lemma~\ref{lem-Nonnesting}, is $P_1$-avoiding.
	See Figure~\ref{fig-exa-P1-decomposition} for an example.
	
	\begin{figure}[ht]
		\begin{center}
			\begin{tikzpicture}[scale = 0.3]
				\draw (-0.6,0) -- (11.6,0);
				\foreach \x in {0,1,2,3,4,5,6,7,8,9,10,11}
				{
					\filldraw (\x,0) circle (2pt);
				}
				\draw[black] (0,0) arc (180:0:1 and 1);
				\draw[black] (1,0) arc (180:0:2 and 1);
				\draw[black] (3,0) arc (180:0:1.5 and 1);
				\draw[black] (4,0) arc (180:0:1.5 and 1);
				\draw[black] (8,0) arc (180:0:0.5 and 1);
				\draw[black] (10,0) arc (180:0:0.5 and 1);
				\node[below left] at (6,2.6) {\scriptsize{$M$}};
				\node[anchor=north west] at (-0.8, -0.05) {\scriptsize{1}};
				\node[anchor=north west] at (0.2, -0.05) {\scriptsize{2}};
				\node[anchor=north west] at (1.2, -0.05) {\scriptsize{3}};
				\node[anchor=north west] at (2.2, -0.05) {\scriptsize{4}};
				\node[anchor=north west] at (3.3, -0.05) {\scriptsize{5}};
				\node[anchor=north west] at (4.3, -0.05) {\scriptsize{6}};
				\node[anchor=north west] at (5.2, -0.05) {\scriptsize{7}};
				\node[anchor=north west] at (6.2, -0.05) {\scriptsize{8}};
				\node[anchor=north west] at (7.2, -0.05) {\scriptsize{9}};
				\node[anchor=north west] at (8, -0.05) {\scriptsize{10}};
				\node[anchor=north west] at (9.2, -0.05) {\scriptsize{11}};
				\node[anchor=north west] at (10.4, -0.05) {\scriptsize{12}};
			\end{tikzpicture}
			\hspace{-0.35cm}
			\raisebox{3ex}{$\quad$}
			\begin{tikzpicture}[scale = 0.3]
				\draw (-0.4,0) -- (7.6,0);
				\foreach \x in {1,2,3,4,5,6}
				{
					\filldraw (\x,0) circle (2pt);
				}
				\draw[thick] (0,0) circle (6pt); 
				\draw[thick] (0,0) circle (2pt);
				
				\draw[thick] (7,0) circle (6pt); 
				\draw[thick] (7,0) circle (2pt);
				
				\draw[black] (0,0) arc (180:0:1 and 1);
				\draw[black] (1,0) arc (180:0:2 and 1);
				\draw[black] (3,0) arc (180:0:1.5 and 1);
				\draw[black] (4,0) arc (180:0:1.5 and 1);
				\node[below left] at (4.3,2.7) {\scriptsize{$M^*$}};
				\node[anchor=north west] at (-0.8, -0.05) {\scriptsize{1}};
				\node[anchor=north west] at (0.2, -0.05) {\scriptsize{2}};
				\node[anchor=north west] at (1.2, -0.05) {\scriptsize{3}};
				\node[anchor=north west] at (2.2, -0.05) {\scriptsize{4}};
				\node[anchor=north west] at (3.3, -0.05) {\scriptsize{5}};
				\node[anchor=north west] at (4.3, -0.05) {\scriptsize{6}};
				\node[anchor=north west] at (5.2, -0.05) {\scriptsize{7}};
				\node[anchor=north west] at (6.2, -0.05) {\scriptsize{8}};
			\end{tikzpicture}
			\hspace{-0.35cm}
			\raisebox{3ex}{$\quad$}
			\begin{tikzpicture}[scale = 0.3]
				\draw (-0.6,0) -- (3.6,0);
				\foreach \x in {0,1,2,3}
				{
					\filldraw (\x,0) circle (2pt);
				}
				\draw[black] (0,0) arc (180:0:0.5 and 1);
				\draw[black] (2,0) arc (180:0:0.5 and 1);
				\node[below left] at (2.8,2.5) {\scriptsize{$M_2$}};
				\node[anchor=north west] at (-0.8, -0.05) {\scriptsize{1}};
				\node[anchor=north west] at (0.2, -0.05) {\scriptsize{2}};
				\node[anchor=north west] at (1.2, -0.05) {\scriptsize{3}};
				\node[anchor=north west] at (2.2, -0.05) {\scriptsize{4}};
			\end{tikzpicture}
			\raisebox{3ex}{$\quad$}
			\begin{tikzpicture}[scale = 0.3]
				\draw (-0.6,0) -- (5.6,0);
				\foreach \x in {0,1,2,3,4,5}
				{
					\filldraw (\x,0) circle (2pt);
				}
				\draw[black] (0,0) arc (180:0:0.5 and 1);
				\draw[black] (2,0) arc (180:0:1 and 1);
				\draw[black] (3,0) arc (180:0:1 and 1);
				\node[below] at (3,2.6) {\scriptsize{$M_1$}};
				\node[anchor=north west] at (-0.8, -0.05) {\scriptsize{1}};
				\node[anchor=north west] at (0.2, -0.05) {\scriptsize{2}};
				\node[anchor=north west] at (1.2, -0.05) {\scriptsize{3}};
				\node[anchor=north west] at (2.2, -0.05) {\scriptsize{4}};
				\node[anchor=north west] at (3.3, -0.05) {\scriptsize{5}};
				\node[anchor=north west] at (4.3, -0.05) {\scriptsize{6}};
			\end{tikzpicture}
			\vspace{-0.45cm}
		\end{center}
		\caption{An example of the decomposition of $M \in \mathcal{M}_{6}(P_1)$.}
		\label{fig-exa-P1-decomposition}
	\end{figure}
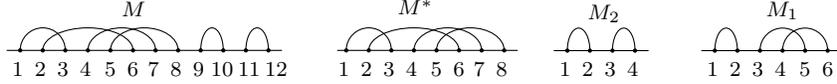
	Conversely, given $M_1 = \{[a_1, b_1], [a_2, b_2], \ldots, [a_{k-1}, b_{k-1}]\} \in \mathcal{M}_{k-1}(P_1)$ and $M_2 \in \mathcal{M}_{n-k}(P_1)$, we reconstruct $M \in \mathcal{M}_n(P_1)$ as follows.  
	Let
	\[
	M^* = \left\{ [ 1, b_1+1], [a_1+1, b_2+1], \ldots, [a_{k-1}+1, 2k] \right\}
	\]
	be the matching obtained by inserting a new opener immediately to the left of $1$ and a new closer immediately to the right of $b_{k-1}$, then pairing the $i$-th opener with the $i$-th closer in order from left to right. 
	We denote this operation by $\Theta$, so that $\Theta(M_1) = M^*$.  
	Clearly, $M^* \in \mathcal{M}_{k}(P_1)$. We claim that $M^*$ is irreducible.  Indeed, suppose there exists a leftmost irreducible block 
	\[
	M^j = \{[1, b_1+1], [2, b_2+1], \ldots, [a_j+1, b_{j+1}+1]\}.
	\] 
	This leads to a contradiction with the condition $a_j < a_{j+1} < b_{j+1}$.  Finally, we set $M = M^* \oplus M_2$.
	
	Observe that the map $\Theta : \mathcal{M}_{k-1}(P_1) \to \mathcal{M}^{Ir}_k(P_1)$ is a bijection.  
	Hence, any $M \in \mathcal{M}_n(P_1)$ can be decomposed as $\Theta(M_1) \oplus M_2$, where $M_1 \in \mathcal{M}_{k-1}(P_1)$ and $M_2 \in \mathcal{M}_{n-k}(P_1)$ are uniquely determined.  
	Moreover, the bijection $\Theta$, together with Theorem~\ref{thm-Catalan}, yields the following corollary.
	\begin{coro}\label{lem-P1-decomposition}
		We have $|\M_{n}^{Ir}(P_1)|=C_{n-1}$ for any $n\geq 1$. 
	\end{coro}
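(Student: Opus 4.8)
The plan is to read off the count directly from the bijection $\Theta\colon \mathcal{M}_{n-1}(P_1)\to \M_n^I(P_1)$ constructed in the discussion above, exactly as Corollary~\ref{coro-P2-irre} was deduced from the bijection $\mathcal{V}$. Granting that $\Theta$ is a bijection onto $\M_n^I(P_1)$, Theorem~\ref{thm-Catalan} immediately gives $|\M_n^I(P_1)| = |\mathcal{M}_{n-1}(P_1)| = C_{n-1}$, so the proof consists of little more than invoking the results already in place.

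Thus the only real content is confirming that $\Theta$ is well defined, injective, and surjective. Well-definedness and the irreducibility of $M^*=\Theta(M_1)$ were already checked in the paragraph preceding the corollary (a balanced proper prefix $\{1,\dots,2j\}$ of $M^*$ would force $a_j < a_{j+1} < b_{j+1}\le 2j$, contradicting that the $j$-th closer of $M^*$ is $b_{j+1}+1>2j$). Injectivity is immediate, because one recovers $M_1$ from $M^*$ by deleting the first opener and the last closer of $M^*$ and then pairing the $i$-th remaining opener with the $i$-th remaining closer. For surjectivity, I would start from an arbitrary $M^*\in\M_n^I(P_1)$, apply this deletion-and-re-pairing to get a length-$(n-1)$ matching $M_1$; pairing the $i$-th opener with the $i$-th closer always yields a nonnesting matching, so $M_1\in\mathcal{M}_{n-1}(P_1)$ by Lemma~\ref{lem-Nonnesting}, and it remains to check that $\Theta(M_1)=M^*$.

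The hard part is precisely this identity $\Theta(M_1)=M^*$, and it hinges on two facts. First, $M^*$ is itself nonnesting by Lemma~\ref{lem-Nonnesting}. Second, a nonnesting matching is determined by its sequence of openers and closers: among all matchings with a prescribed opener/closer pattern there is exactly one nonnesting one, namely the matching produced by the FIFO rule, which coincides with pairing the $i$-th opener with the $i$-th closer (one checks by the usual leftmost-disagreement argument that any other pairing creates a nesting). Since $\Theta$ only prepends an opener, appends a closer, and re-pairs in this canonical way, $\Theta(M_1)$ has the same opener/closer pattern as $M^*$ and is nonnesting; being the unique such matching, it must equal $M^*$. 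This settles surjectivity, hence the bijectivity of $\Theta$, and the corollary follows.
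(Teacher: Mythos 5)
Your proposal is correct and follows the same route as the paper: the corollary is obtained by combining the bijection $\Theta\colon\mathcal{M}_{n-1}(P_1)\to\M_n^I(P_1)$ with $|\mathcal{M}_{n-1}(P_1)|=C_{n-1}$ from Theorem~\ref{thm-Catalan}. The only difference is that the paper simply asserts ``Observe that $\Theta$ is a bijection,'' whereas you supply the verification --- in particular the surjectivity argument via the uniqueness of the nonnesting matching with a prescribed opener/closer word --- which is a sound and welcome filling-in of the omitted details.
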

	
	\textbf{The decomposition of $P \in \PP_n\bf{(3+1)}$.} 
	Following~\cite{Bousquet-Claesson-Dukes-Kitaev}, given a poset \( P = (X, \le ) \in \mathcal{P}_n\), the \textit{strict down-set} of \( x \in P\) is  $D(x) = \{ y \, |\,   y < x \}$. Similarly, the \textit{strict up-set} of $x$ is $U(x)=\{y \, |\,  y > x\}$. 
	It is known that a poset is \textbf{(2+2)}-free if and only if its collection of strict down-sets can be linearly ordered by inclusion
	$\emptyset =D_0 \subset  D_1 \cdots \subset D_t$.
	An element \(x\) is said to be at \textit{level} \(i\) in \(P\), denoted \(\ell(x) = i\), if \(D(x) = D_i\). 
	
	The {\it ordinal sum} of two posets $R$ and $Q$, denoted $R \oplus Q$, is defined on their disjoint union and partially ordered by $x \le y$  if  (a) $x,y \in R$ and $x \le_R y$, or (b) $x,y \in Q$ and $x \le_Q y$, or (c) $x \in R$ and $y \in Q$.
	A poset $P$ is \emph{irreducible} if it cannot be expressed as the ordinal sum of two nonempty posets.
	Each poset has a unique {\it ordinal sum decomposition} $P=P^{(1)} \oplus P^{(2)} \oplus \cdots \oplus P^{(k)}$ with $P^{(i)}$ irreducible.
	An example of such a decomposition is shown in Figure~\ref{fig-ordinal decomposition}.
	
	\begin{figure}[h]
		\begin{center}
			\begin{tikzpicture}[scale=0.4,
				node/.style={circle, fill=black, inner sep=0pt, minimum size=2.5pt},
				node distance = 1.5cm,
				baseline={(current bounding box.center)}
				]
				\node[node, label=left:\scriptsize{}] (x0) at (0.5, -2) {};
				\node[node, label=left:\scriptsize{}] (x1) at (-1.5, -2) {};
				\node[node, label=right:\scriptsize{}] (x2) at (-0.5, -1.25) {};
				\node[node, label=left:\scriptsize{}] (x3) at (1.5, -1) {};
				\node[node, label=left:\scriptsize{}] (x4) at (-1.5, -0.5) {};
				\node[node, label=right:\scriptsize{}] (x5) at (1.5, 1) {};
				\node[node, label=left:\scriptsize{}] (x6) at (-1.5, 1) {};
				\node[node, label=left:\scriptsize{}] (x7) at (-1.5, 2) {};
				\node[node, label=left:\scriptsize{}] (x8) at (-2.5, 2) {};
				\node[node, label=left:\scriptsize{}] (x9) at (-1.5, 3) {};
				\draw (x0) -- (x2);
				\draw (x0) -- (x3);
				\draw (x1) -- (x4);
				\draw (x2) -- (x4);
				\draw (x3) -- (x5);
				\draw (x3) -- (x6);
				\draw (x4) -- (x5);
				\draw (x4) -- (x6);
				\draw (x5) -- (x9);
				\draw (x6) -- (x7);
				\draw (x6) -- (x8);
				\draw (x7) -- (x9);
				\draw (x8) -- (x9);
				\node[anchor=north west] at (-1, -1.8) {\scriptsize{$P$}};
			\end{tikzpicture}
			\hspace{0.3cm}
			\text{$=$}
			\begin{tikzpicture}[scale=0.4,
				node/.style={circle, fill=black, inner sep=0pt, minimum size=2.5pt},
				node distance = 1.5cm,
				baseline={(current bounding box.center)}
				]
				\node[node, label=right:\scriptsize{}] (x0) at (0, -2) {};
				\node[node, label=left:\scriptsize{}] (x1) at (-1.5, -2) {};
				\node[node, label=right:\scriptsize{}] (x2) at (0,-0.5) {};
				\node[node, label={[right] \scriptsize{}}] (x3) at (1.5, -1) {};
				\node[node, label=left:\scriptsize{}] (x4) at (0, 1) {};
				
				\draw (x0) -- (x2);
				\draw (x0) -- (x3);
				\draw (x1) -- (x4);
				\draw (x2) -- (x4);
				\node[anchor=north west] at (-1, -2.4) {\scriptsize{$P^{(1)}$}};
			\end{tikzpicture}
			\text{$\oplus$}
			\begin{tikzpicture}[scale=0.4,
				node/.style={circle, fill=black, inner sep=0pt, minimum size=2.5pt},
				node distance = 1.5cm,
				baseline={(current bounding box.center)}
				]
				\node[node, label=left:\scriptsize{}] (x0) at (1, -2) {};
				\node[node, label=left:\scriptsize{}] (x1) at (0, -2) {};
				\node[node, label=left:\scriptsize{}] (x2) at (-1, -1) {};
				\node[node, label=left:\scriptsize{}] (x3) at (1, -1) {};
				\draw (x1) -- (x2);
				\draw (x1) -- (x3);
				\node[anchor=north west] at (-0.8, -2.2) {\scriptsize{$P^{(2)}$}};
			\end{tikzpicture}
			\hspace{0.3cm}
			\text{$\oplus$}
			\begin{tikzpicture}[scale=0.4,
				node/.style={circle, fill=black, inner sep=0pt, minimum size=2.5pt},
				node distance = 1.5cm,
				baseline={(current bounding box.center)}
				]
				\node[node, label=left:\scriptsize{}] (x1) at (-1.5, -2) {};
				\node[anchor=north west] at (-2.2, -2) {\scriptsize{$P^{
							(3)}$}};
			\end{tikzpicture}
			\caption{An example of the ordinal sum decomposition of $P$.}
			\label{fig-ordinal decomposition}
		\end{center}
	\end{figure}
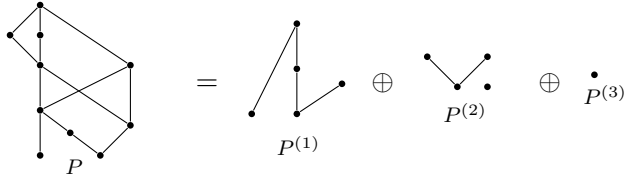
	
	For $P\in \PP_n\textbf{(3+1)}$,
	the decomposition of $P$ is based on the bijection $\Omega$  described in Section~\ref{subsec-matching-poset}.
	Let $M=\Omega^{-1}(P)$ and write its decomposition as $M=M^*\oplus M_2$ with $M^*=\Theta(M_1)$.
	It follows that $P^* = \Omega(M^*)$ is the first irreducible subposet of $P$, while the subposet formed by the remaining components is $P_{\text{II}} = \Omega(M_2)$.
	We next label the elements of $ P^* $ as $ p^*_0$, $p^*_1, \dots, p^*_{k-1}$ in order of weakly increasing level, and within each level, arrange them in weakly descending order of the size of their up-sets. 
	Define the order on the set $ \{p_1, \dots, p_{k-1}\}$ of elements of $ P_{\text{I}} =\Omega(M_1)$  by setting 
	$p_i < p_j $ in $P_{\text{I}}$ if $p^*_{i-1} < p^*_j$ in $ P^* $. 
	Then,  we have 
	\(
	\ell(p_1)  \leq \ell(p_2) \leq \cdots \leq \ell(p_{k-1})
	\) in $ P_{\text{I}}$.
	See Figure~\ref{fig-P^*-P1} for an example.
	
	\begin{figure}[h]
		\begin{center}
			\begin{tikzpicture}[scale=0.4,
				node/.style={circle, fill=black, inner sep=0pt, minimum size=2.5pt},
				node distance = 1.5cm,
				baseline={(current bounding box.center)}
				]
				\node[node, label=left:\scriptsize{}] (x0) at (0.5, -2) {};
				\node[node, label=left:\scriptsize{}] (x1) at (-1.5, -2) {};
				\node[node, label=right:\scriptsize{}] (x2) at (-0.5, -1.25) {};
				\node[node, label=left:\scriptsize{}] (x3) at (1.5, -1) {};
				\node[node, label=left:\scriptsize{}] (x4) at (-1.5, -0.5) {};
				\node[node, label=right:\scriptsize{}] (x5) at (1.5, 1) {};
				\node[node, label=left:\scriptsize{}] (x6) at (-1.5, 1) {};
				\node[node, label=left:\scriptsize{}] (x7) at (-1.5, 2) {};
				\node[node, label=left:\scriptsize{}] (x8) at (-2.5, 2) {};
				\node[node, label=left:\scriptsize{}] (x9) at (-1.5, 3) {};
				\draw (x0) -- (x2);
				\draw (x0) -- (x3);
				\draw (x1) -- (x4);
				\draw (x2) -- (x4);
				\draw (x3) -- (x5);
				\draw (x3) -- (x6);
				\draw (x4) -- (x5);
				\draw (x4) -- (x6);
				\draw (x5) -- (x9);
				\draw (x6) -- (x7);
				\draw (x6) -- (x8);
				\draw (x7) -- (x9);
				\draw (x8) -- (x9);
				\node[anchor=north west] at (-1, -1.8) {\scriptsize{$P$}};
			\end{tikzpicture}
			\hspace{0.3cm}
			\begin{tikzpicture}[scale=0.4,
				node/.style={circle, fill=black, inner sep=0pt, minimum size=2.5pt},
				node distance = 1.5cm,
				baseline={(current bounding box.center)}
				]
				\node[node, label=right:\scriptsize{$p^*_0$}] (x0) at (0, -2) {};
				\node[node, label=left:\scriptsize{$p^*_1$}] (x1) at (-1.5, -2) {};
				\node[node, label=right:\scriptsize{$p^*_2$}] (x2) at (0,-0.5) {};
				\node[node, label={[right] \scriptsize{$p^*_3$}}] (x3) at (1.5, -1) {};
				\node[node, label=left:\scriptsize{$p^*_4$}] (x4) at (0, 1) {};
				
				\draw (x0) -- (x2);
				\draw (x0) -- (x3);
				\draw (x1) -- (x4);
				\draw (x2) -- (x4);
				\node[anchor=north west] at (-0.3, -2.4) {\scriptsize{$P^*$}};
			\end{tikzpicture}
			\hspace{0.5cm}
			\begin{tikzpicture}[scale=0.4,
				node/.style={circle, fill=black, inner sep=0pt, minimum size=2.5pt},
				node distance = 1.5cm,
				baseline={(current bounding box.center)}
				]
				\node[node, label=left:\scriptsize{$p_1$}] (x1) at (0.5, -2) {};
				\node[node, label=left:\scriptsize{$p_2$}] (x4) at (-1, -0.5) {};
				\node[node, label=right:\scriptsize{$p_3$}] (x3) at (2, -0.5) {};
				\node[node, label=right:\scriptsize{$p_4$}] (x5) at (0.5, 1) {};
				\draw (x4) -- (x5);
				\draw (x1) -- (x3);
				\draw (x1) -- (x4);
				\draw (x3) -- (x5);
				\node[anchor=north west] at (0, -2.4) {\scriptsize{$P_{\text{I}}$}};
			\end{tikzpicture}
			\hspace{0.3cm}
			\begin{tikzpicture}[scale=0.4,
				node/.style={circle, fill=black, inner sep=0pt, minimum size=2.5pt},
				node distance = 1.5cm,
				baseline={(current bounding box.center)}
				]
				\node[node, label=left:\scriptsize{}] (x0) at (1.5, -2) {};
				\node[node, label=left:\scriptsize{}] (x1) at (0, -2) {};
				\node[node, label=left:\scriptsize{}] (x2) at (-1, -0.5) {};
				\node[node, label=right:\scriptsize{}] (x3) at (0, -0.5) {};
				\node[node, label=right:\scriptsize{}] (x4) at (0, 1) {};
				\draw (x0) -- (x4);
				\draw (x2) -- (x4);
				\draw (x1) -- (x3);
				\draw (x1) -- (x2);
				\draw (x3) -- (x4);
				\node[anchor=north west] at (-0.8, -2) {\scriptsize{$P_{\text{II}}$}};
			\end{tikzpicture}
			\caption{An example of the decomposition of $P \in \mathcal{P}_{10}\textbf{(3+1)}$.}
			\label{fig-P^*-P1}
		\end{center}
	\end{figure}
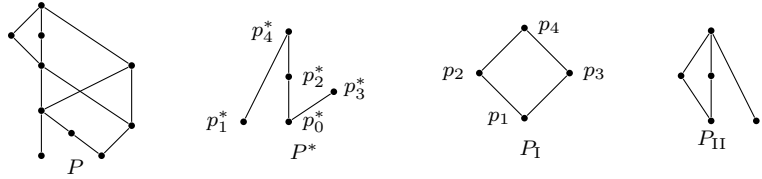

	Conversely, for any given $P_\text{I}\in \PP_{k-1}\bf{(3+1)}$ and $P_\text{II} \in \PP_{n-k}\bf{(3+1)}$, we first label the elements $p_1$, $p_2,\dots,p_{k-1}$ of $P_{\text{I}}$ in the same way as we labeled the elements of $P^*$ above. 
	Then the elements $p^*_0$, $p^*_1,\dots,p^*_{k-1}$ in $P^*$ are recovered by reversing the previous rule: $p^*_{i-1}<p^*_j$ if $p_i<p_j$ in $P_{\text{I}}$. Note that the elements $p^*_0$ and $p^*_1$, as well as the elements $p^*_{k-1}$ and $p^*_k$, are incomparable. 
	Finally, we let $P= P^* \oplus P_{\text{II}}$.

	\textbf{The decomposition of $P \in \PP_n\bf{(N)}$.} 
	For a poset $P\in\mathcal{P}_n\mathbf{(N)}$ we set $M=\Omega^{-1}(P)=\widetilde{M}\oplus M''$ with $\widetilde{M}=\mathcal{V}(M')$, where $\mathcal{V}$ is detailed in Section~\ref{P2-sec}.
	Thus, $P$ decomposes into its first irreducible subposet $\widetilde{P} = \Omega(\widetilde{M})$ and the subposet of remaining components $P'' = \Omega(M'')$. The poset $P'=\Omega(M')$ is obtained by removing an isolated element from $\widetilde{P}$, as illustrated in Figure~\ref{fig-P'-P''-N}. Observe that an isolated element in $\widetilde{P}$ always exists, since there is an isolated element corresponding to the reduction arc in $\widetilde{M}$.
	
	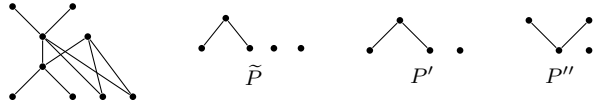
\begin{figure}[h]
		\begin{center}
			\begin{tikzpicture}[scale=0.4,
				node/.style={circle, fill=black, inner sep=0pt, minimum size=2.5pt},
				node distance = 1.5cm,
				baseline={(current bounding box.center)}
				]
				\node[node, label=left:\scriptsize{}] (x0) at (3, -1) {};
				\node[node, label=left:\scriptsize{}] (x1) at (-1, -1) {};
				\node[node, label=left:\scriptsize{}] (x2) at (1, -1) {};
				\node[node, label=left:\scriptsize{}] (x3) at (2, -1) {};
				\node[node, label=left:\scriptsize{}] (x4) at (0, 0) {};
				\node[node, label=left:\scriptsize{}] (x5) at (0, 1) {};
				\node[node, label=left:\scriptsize{}] (x6) at (-1, 2) {};
				\node[node, label=left:\scriptsize{}] (x7) at (1,2) {};
				\node[node, label=left:\scriptsize{}] (x8) at (1.5,1) {};
				\draw (x0) -- (x5);
				\draw (x1) -- (x4);
				\draw (x2) -- (x4);
				\draw (x5) -- (x4);
				\draw (x3) -- (x5);
				\draw (x5) -- (x6);
				\draw (x5) -- (x7);
				\draw (x0) -- (x8);
				\draw (x4) -- (x8);
				\draw (x3) -- (x8);
			\end{tikzpicture}
			\hspace{0.3cm}              
			\begin{tikzpicture}[scale=0.4,
				node/.style={circle, fill=black, inner sep=0pt, minimum size=2.5pt},
				node distance = 1.5cm,
				baseline={(current bounding box.center)}
				]
				\node[node, label=left:\scriptsize{}] (x0) at (2.5, -1) {};
				\node[node, label=left:\scriptsize{}] (x1) at (-0.8, -1) {};
				\node[node, label=left:\scriptsize{}] (x2) at (0.8, -1) {};
				\node[node, label=left:\scriptsize{}] (x3) at (0, 0) {};
				\node[node, label=below:\scriptsize{}] (x4) at (1.6, -1) {};
				\draw (x2) -- (x3);
				\draw (x1) -- (x3);
				\node[anchor=north west] at (0.3, -1.2) {\scriptsize{$\widetilde{P}$}};
			\end{tikzpicture}
			\hspace{0.3cm}
			\begin{tikzpicture}[scale=0.4,
				node/.style={circle, fill=black, inner sep=0pt, minimum size=2.5pt},
				node distance = 1.5cm,
				baseline={(current bounding box.center)}
				]
				\node[node, label=left:\scriptsize{}] (x0) at (2, -1) {};
				\node[node, label=left:\scriptsize{}] (x1) at (-1, -1) {};
				\node[node, label=left:\scriptsize{}] (x2) at (1, -1) {};
				\node[node, label=left:\scriptsize{}] (x3) at (0, 0) {};
				\draw (x2) -- (x3);
				\draw (x1) -- (x3);
				\node[anchor=north west] at (0, -1.2) {\scriptsize{$P'$}};
			\end{tikzpicture}
			\hspace{0.3cm}
			\begin{tikzpicture}[scale=0.4,
				node/.style={circle, fill=black, inner sep=0pt, minimum size=2.5pt},
				node distance = 1.5cm,
				baseline={(current bounding box.center)}
				]
				\node[node, label=left:\scriptsize{}] (x0) at (1, -1) {};
				\node[node, label=left:\scriptsize{}] (x1) at (0, -1) {};
				\node[node, label=left:\scriptsize{}] (x2) at (-1, 0) {};
				\node[node, label=left:\scriptsize{}] (x3) at (1, 0) {};
				\draw (x1) -- (x2);
				\draw (x1) -- (x3);
				\node[anchor=north west] at (-0.8, -1.2) {\scriptsize{$P''$}};
			\end{tikzpicture}
			\caption{An example of the decomposition of $P \in \mathcal{P}_9(\mathbf{N})$.}
			\label{fig-P'-P''-N}
		\end{center}
	\end{figure}

	Conversely, for any $P' \in \mathcal{P}_{k-1}(\mathbf{N})$ and $P'' \in \mathcal{P}_{n-k}(\mathbf{N})$, 
	we obtain $\widetilde{P}$ from $P'$ by adding an isolated element, and then set $P = \widetilde{P} \oplus P''$.
	
	\textbf{The decomposition of $\alpha \in \A_n(101)$.} 
	Let $\alpha=(\alpha_1, \alpha_2, \dots,\alpha_n)\in \A_n(101)$.
	Based on the bijection~$\Psi$ mentioned in Section~\ref{sec-matching-101}, set $M=\Psi^{-1}(\alpha)$, which decomposes into $\widetilde{M}\oplus M'' \in \M_{n}(P_{2})$ with $\widetilde{M}=\V(M')$, where $M'\in \M_{k-1}(P_2)$ and $M'' \in \M_{n-k}(P_2)$.
	We claim that $k$ is the largest index such that $\alpha_k=0$.
	This is because the first irreducible block $\widetilde{M}$ is mapped to the contiguous subsequence of $\alpha$ that starts with the first entry and ends with 0.
	Then
	\begin{align*}
		\Psi(\widetilde M)& =\tilde\alpha=(\alpha_1, \alpha_2,  \dots, \alpha_{k-1}, 0),\\
		\Psi(M')& =\alpha'=(\alpha_1, \alpha_2,  \dots, \alpha_{k-1}),
		\text{ and } \\
		\Psi(M'')& =\alpha''=(\alpha_{k+1}-m,\alpha_{k+2}-m,\dots,\alpha_{n}-m),
	\end{align*}
	where $m=\max(\alpha')+1$ and, by Lemma~\ref{lem-cite-101}, also $m=\alpha_{k+1}$.
    If $k=n$, then $M''$ is empty and $\alpha''$ is the empty sequence.
	For example, for $\alpha=01002232$, we have  $\widetilde{\alpha}=0100$, $\alpha'=010$, and $\alpha''=0010$.
	
	Conversely, given $\alpha'\in\A_{k-1}(101)$ and $\alpha''\in\A_{n-k}(101)$, form $\tilde\alpha$ by appending $0$ to $\alpha'$,  add $\max(\widetilde{\alpha})+1$ to every entry of $\alpha''$, and concatenate the result to the right of $\widetilde{\alpha}$ to obtain $\alpha$.
	
	\textbf{The decomposition of $\pi \in \F_n(3142)$.}
	Let $\pi = \pi_1\pi_2\cdots\pi_n \in \F_n$. The decomposition of $\pi$ relies on the bijection $\Lambda$ described in Section~\ref{subsec-matching-perm}. Suppose $\pi_1 = k$, and let $\alpha = \Lambda^{-1}(\pi)$. Then decompose $\alpha$ into $\alpha' \in \mathcal{A}_{k-1}(101)$ and $\alpha'' \in \mathcal{A}_{n-k}(101)$.
	Applying $\Lambda$ to each component, we obtain that
	\[
	\pi' = \Lambda(\alpha') = \pi_2\cdots\pi_k
	\quad\text{and}\quad
	\pi'' = \Lambda(\alpha'') = (\pi_{k+1} - k)\cdots(\pi_n - k).
	\]
	We define $\widetilde{\pi} = k\pi_2\cdots\pi_k = k\pi'$, so that $\pi = \widetilde{\pi} \oplus \pi''$, where ``$\oplus$'' denotes the direct sum of permutations.
	For example, if $\pi = 41328657$, then $k = 4$, and we have $\widetilde{\pi} = 4132$, $\pi' = 132$, and $\pi'' = 4213$. We note that the decomposition of ascent sequences induced by $\Psi$, together with the decomposition of permutations induced by $\Lambda$, defines a decomposition of permutations induced by $\Upsilon = \Lambda \circ \Psi$.
	
	Conversely, given $\pi' \in \F_{k-1}(3142)$ and $\pi'' \in \F_{n-k}(3142)$, define $\widetilde{\pi} = k\pi'$ and set $\pi = \widetilde{\pi} \oplus \pi''$. Then $\pi$ is a permutation in $\F_n(3142)$ with $\pi_1 = k$.
	
	\subsection{Statistics over pattern-avoiding Fishburn structures}\label{stats-St-matchings-subsec}
	This subsection is dedicated to the statistics on four pattern-avoiding Fishburn structures, leading to the proofs of Theorems~\ref{thm-equi-cr-nr-P1}--\ref{thm-P1-P2-joint}.
	
	We first introduce the statistics over \textbf{(2+2)}-free posets, ascent sequences, and Fishburn permutations.
	Given a poset \( P = (X, \le ) \in \mathcal{P}_n\), 
	the {\it height}, denoted $\h(P)$, is the maximum size
	of a chain, while the {\it width}, denoted $\wid(P)$, is the maximum size
	of an antichain. Let $\smc(P)$ be the  size of the shortest maximal chain.
	An element $y$ is said to be a minimal element if $\ell(y)=0$.
	Let $\minn(P)$ be the number of minimal elements in $P$. 
	Following Fishburn \cite{Fishburn-1983,Fishburn-1985}, the \emph{magnitude} of $P$, denoted $\magg(P)$, is defined as the number of distinct strict down-sets, or equivalently, the number of maximal antichains. Its distribution over $\mathcal{P}_n$ was studied by Jel\'{\i}nek~\cite{Jelinek}. 
	We also let $\ssd(P)$ denote the number of subposets in the ordinal sum decomposition of $P$.
	For example, for the leftmost poset in Figure~\ref{fig-P'-P''-N}, we have $\h(P) = \wid(P)=\magg(P)=\minn(P) = 4$ and $\smc(P)=\ssd(P) = 2$.
	
	Given a sequence $\alpha = (\alpha_1, \alpha_2, \ldots, \alpha_n) \in \A_n$, we consider the following statistics:
	\begin{itemize}
		\item $\lmax(\alpha)$ = the number of left-to-right maxima  in $\alpha$ = $\#\{i\ |\ \alpha_i> \alpha_j \text{ for all } j<i\}$, 
		\item $\rmin(\alpha)$ = the number of right-to-left minima  in $\alpha$ = $\#\{i\ |\ \alpha_i< \alpha_j \text{ for all } j>i\}$,
		\item $\zero(\alpha)$ = the number of zeros in $\alpha$.
	\end{itemize}
	
	Given a permutation $\pi=\pi_1\pi_2\cdots\pi_n \in \mathcal{F}_n$, a \emph{descending run} is a maximal consecutive subsequence $\pi_i\pi_{i+1}\cdots\pi_j$ satisfying $\pi_i>\pi_{i+1}>\cdots>\pi_j$. Let $\idr(\pi)$ be the length of the {\it initial descending run} in $\pi$. We also define $\lmax(\pi)$ and $\rmin(\pi)$ in the same way as for ascent sequences.
	
	Kitaev and Remmel~\cite{Kitaev-Remmel} proved equidistribution of the following statistics:
	\begin{itemize}
		\item $\minn$ over $\PP_n$, $\zero$ over $\A_n$, and $\idr$ over $\F_n$;
		\item $\magg(P)$ over $\PP_n$ and $\asc+1$ over $\A_n$.
	\end{itemize}
	Chen, Yan, and Zhou~\cite{Chen-Yan-Zhou} demonstrated that the left-to-right maxima and right-to-left maxima have a symmetric joint distribution over $\F_n$, 
	and that the right-to-left maxima over $\F_n$ are equidistributed with the right-to-left minima over $\A_n$.
	
	\subsubsection{The proof of Theorem~\ref{thm-equi-cr-nr-P1}}\label{proof-thm-1-3}
	
	Our proof relies on a well-known bijection between Dyck paths and nonnesting matchings (equivalently, $\M(P_1)$); see, for example, \cite{Stanley}. The set of \emph{Dyck paths} of semilength~$ n $, denoted $ \mathcal{D}_n $, consists of lattice paths from $ (0, 0) $ to $ (2n, 0) $ composed of up-steps $ U = (1, 1) $ and down-steps $ D = (1, -1) $ that never fall below the $ x $-axis. We recall the bijection $ \Gamma \colon \mathcal{D}_n \to \mathrm{NN}_n $, where $ \mathrm{NN}_n $ denotes the set of nonnesting matchings on $ [2n] $. Given a Dyck path $ \mu $, the matching $ \Gamma(\mu) $ is obtained by pairing the $i$-th up-step with the corresponding $i$-th down step.
	Figure~\ref{fig-exa-dyck-m} shows an example of the application of~$\Gamma$.
	
	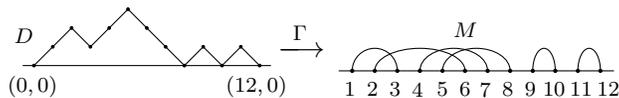
\begin{figure}[h]
		\begin{center}
			\begin{tikzpicture}[scale=0.25]
				\draw (-0.6,0) -- (12.6,0);
				\coordinate (start) at (0,0);
				\draw (start)--(2,2)--(3,1)--(4,2)--(5,3)--(6,2)--(7,1)--(8,0)--(9,1)--(10,0)--(11,1)--(12,0);
				\filldraw (0,0) circle (2pt);
				\filldraw (1,1) circle (2pt);
				\filldraw (2,2) circle (2pt);
				\filldraw (3,1) circle (2pt);
				\filldraw (4,2) circle (2pt);
				\filldraw (5,3) circle (2pt);
				\filldraw (6,2) circle (2pt);
				\filldraw (7,1) circle (2pt);
				\filldraw (8,0) circle (2pt);
				\filldraw (9,1) circle (2pt);
				\filldraw (10,0) circle (2pt);
				\filldraw (11,1) circle (2pt);
				\filldraw (12,0) circle (2pt);
				\node[below left] at (0.5,2.5) {\scriptsize{$D$}};
				\node[below left] at (1.8,0) {\scriptsize{$(0,0)$}};
				\node[below left] at (14,0) {\scriptsize{$(12,0)$}};
			\end{tikzpicture}
			\hspace{-0.5cm}
			\raisebox{4ex}{$\xlongrightarrow{\Gamma}$}
			\begin{tikzpicture}[scale = 0.3]
				\draw (-0.6,0) -- (11.6,0);
				\foreach \x in {0,1,2,3,4,5,6,7,8,9,10,11}
				{
					\filldraw (\x,0) circle (2pt);
				}
				\draw[black] (0,0) arc (180:0:1 and 1);
				\draw[black] (1,0) arc (180:0:2 and 1);
				\draw[black] (3,0) arc (180:0:1.5 and 1);
				\draw[black] (4,0) arc (180:0:1.5 and 1);
				\draw[black] (8,0) arc (180:0:0.5 and 1);
				\draw[black] (10,0) arc (180:0:0.5 and 1);
				\node[below left] at (6,2.6) {\scriptsize{$M$}};
				\node[anchor=north west] at (-0.8, -0.05) {\scriptsize{1}};
				\node[anchor=north west] at (0.2, -0.05) {\scriptsize{2}};
				\node[anchor=north west] at (1.2, -0.05) {\scriptsize{3}};
				\node[anchor=north west] at (2.2, -0.05) {\scriptsize{4}};
				\node[anchor=north west] at (3.3, -0.05) {\scriptsize{5}};
				\node[anchor=north west] at (4.3, -0.05) {\scriptsize{6}};
				\node[anchor=north west] at (5.2, -0.05) {\scriptsize{7}};
				\node[anchor=north west] at (6.2, -0.05) {\scriptsize{8}};
				\node[anchor=north west] at (7.2, -0.05) {\scriptsize{9}};
				\node[anchor=north west] at (8, -0.05) {\scriptsize{10}};
				\node[anchor=north west] at (9.2, -0.05) {\scriptsize{11}};
				\node[anchor=north west] at (10.4, -0.05) {\scriptsize{12}};
			\end{tikzpicture}
		\end{center}
		\caption{An example of the application of the bijection $\Gamma$.}
		\label{fig-exa-dyck-m}
	\end{figure}
	
	The \emph{height} of a Dyck path \( \mu \in \mathcal{D}_n \) is the maximum \( y \)-coordinate attained by the path, namely, 
	\(
	\h(\mu) = \max_{0 \le i \le 2n} y_i,
	\)
	where \( (x_i, y_i) \) is the position after the \(i\)-th step of the path starting from \( (x_0, y_0) = (0, 0) \). 
	It is clear that the bijection $\Gamma$ sends the statistic $\h$ on Dyck paths to the statistic $\crr$ 
	on nonnesting matchings; that is, $\h(\mu)=\crr(\Gamma(\mu))$ for any Dyck path $\mu$. The enumeration of the height statistic on Dyck paths appears in sequence \textup{A080936} in \cite{OEIS}.
	Suppose the first return of the Dyck path $\mu$ to the $x$-axis decomposes $\mu$ into an initial segment $\mu'=U\mu_1D$ and a remaining segment $\mu_2$, that is, $\mu=U\mu_1D\mu_2$. Then it is easy to see that
	\begin{align}\label{formular-h(D)}
		\h(\mu) = \max\{\h(\mu_1) + 1, \h(\mu_2)\}.
	\end{align}
	
	\begin{proof}[Proof of Theorem \ref{thm-equi-cr-nr-P1}]
		Using the decompositions described in Section~\ref{subsec-decomp},
		we set $M=\Theta(M_1)\oplus M_2\in\M_n(P_1)$,
		$P=P^*\oplus P_{\text{II}}\in\PP_n(\mathbf{3+1})$, and  
		$Q=\widetilde{P}\oplus P''\in\PP_n(\mathbf{N})$.
		Let $M^*=\Theta(M_1)$.  
		We also set $P_{\text{I}}$ to be the poset obtained from $P^*$ by reducing an element and rearranging its relations, and $P'$ to be the poset obtained by removing an isolated element from $\widetilde{P}$.
		Since
		\begin{align*}
			\crr(M) &= \max\{\crr(M^*), \crr(M_2)\}, & 
			\wid(P) &= \max\{\wid(P^*), \wid(P_{\text{II}})\}, & 
			\wid(Q) &= \max\{\wid(\widetilde{P}), \wid(P'')\},
		\end{align*}
		by \eqref{formular-h(D)}, it suffices to show that
		\begin{align}
			\crr(M^*) &= \crr(M_1) + 1, \label{cr-form} \\
			\wid(P^*) &= \wid(P_{\text{I}}) + 1, \label{formular-w(P_1)} \\
			\wid(\widetilde{P}) &= \wid(P') + 1.  \label{w-form-P2}
		\end{align}
		In (i)--(iii) below, we prove \eqref{cr-form}--\eqref{w-form-P2}, respectively.
		
		\noindent
		(i) Observe that any maximal crossing in $M$ is formed by consecutive openers and closers. Let 
		\[
		m_1 = [a_{i}, b_{i}],\ m_2 = [a_{i+1}, b_{i+1}],\ \ldots,\ m_{t} = [a_{i+t-1}, b_{i+t-1}]
		\]
		form a maximal \( t \)-crossing in $M_1$, where $t$ is the maximum possible size.
		In $M^*$, consider the arcs $m'_1=[a_{i-1}+1, b_{i}+1], m'_2=[a_{i}+1, b_{i+1}+1], \ldots, m'_{t+1}=[a_{i+t-1}+1,b_{i+t}+1]$, where $a_0=0$.
		Clearly, we have
		\[
		a_{i-1} < a_{i} < \dots < a_{i+t-1} < b_{i} < \dots < b_{i+t-1} < b_{i+t},
		\]
		and consequently $m'_1,m'_2,\dots,m'_{t+1}$ form a $(t+1)$-crossing in~$M^*$. Next, we shall prove that $t+1$ is the maximal size. 
		Suppose there exist arcs $[a_{j-1}+1, b_j+1], [a_j+1, b_{j+1}+1], \ldots, [a_{j+t}+1, b_{j+t+1}+1]$ forming a $(t+2)$-crossing in $M^*$. Then this leads to
		\[
		a_j < a_{j+1} < \cdots < a_{j+t} < b_j < b_{j+1} < \cdots < b_{j+t},
		\]
		implying that the arcs $[a_j, b_j], [a_{j+1}, b_{j+1}], \ldots, [a_{j+t}, b_{j+t}]$ form a $(t+1)$-crossing in $M_1$, which contradicts the maximality of $t$.
		Therefore, we get~\eqref{cr-form}.
		
		\noindent
		(ii) Consider a maximal-size antichain \( \{p_{i_1}, p_{i_2}, \ldots, p_{i_t}\} \) in \( P_{\text{I}} \), where \( i_1 < i_2 < \cdots < i_t \). We shall show that the elements in \( X = \{p^*_{i_1 - 1}, p^*_{i_2 - 1}, \ldots, p^*_{i_t-1}, p^*_{i_t}\} \) form a maximal-size antichain in \( P^* \) as follows.  
		
		(a) The elements in $\{p^*_{i_1 - 1}, p^*_{i_2 - 1},\ldots, p^*_{i_t - 1}\}$ must form an antichain in $P^*$. If not, there would exist $i_j, i_s \in \{i_1, \ldots, i_t\}$ with $p^*_{i_j-1} < p^*_{i_s-1}$ in $P^*$, implying $p_{i_j} < p_{i_s-1}$ in $P_{\rm I}$. Since $\ell(p_{i_s-1}) \leq \ell(p_{i_s})$, we have $p_{i_j}\in D(p_{i_s-1}) \subset D(p_{i_s})$, leading to $p_{i_j} < p_{i_s}$ in $P_{\text{I}}$, a contradiction. 
		Moreover, $p^*_{i_t}$ is incomparable with the other elements of $X$ by the construction of $P^*$.
		Thus $X$ is pairwise incomparable.
		
		(b) Suppose there exists \( Y = \{p^*_{j_1}, p^*_{j_2},\ldots, p^*_{j_{t+2}}\} \) forming an antichain in \( P^* \) with \( j_1 < j_2< \cdots < j_{t+2} \). Then \( \{p_{j_1+1}, p_{j_2+1}, \ldots, p_{j_{t+1}+1}\} \) forms an antichain in \( P_{\rm I}\). 
		If not, there would exist \( j_r < j_s \) in \( \{j_1, \ldots, j_{t+1}\} \) with \( p_{j_r+1} < p_{j_s+1} \) in \( P_{\text{I}} \), implying \( p^*_{j_r} < p^*_{j_s+1} \) in \( P^* \). Since \( \ell(p^*_{j_s+1}) \leq \ell(p^*_{j_{t+2}}) \), it follows that \( p^*_{j_r}\in D(p^*_{j_s+1}) \subset D(p^*_{j_{t+2}}) \), which implies \( p^*_{j_r} < p^*_{j_{t+2}} \), contradicting the choice of \( Y \).
		Thus, no antichain in \( P^* \) exceeds \( X \) in size.
		
		\noindent
		Combining (a) and (b), we get~\eqref{formular-w(P_1)}.
		
		\noindent
		(iii) Let $\{p_{i_1},p_{i_2},\dots,p_{i_{t}}\}$ be a maximal antichain in $P'$.
		By the construction of $\widetilde{P}$, these elements together with the newly added isolated element form an antichain of size~$t+1$, which is maximum. Hence we obtain~\eqref{w-form-P2}.
	\end{proof}
	
	\begin{remark} There are other statistics in the literature on various combinatorial structures that have the same distribution as the statistics in Theorem~\ref{thm-equi-cr-nr-P1}, such as the statistic of the maximum number of nesting arcs in the set of noncrossing matchings.  \end{remark}
	
	\subsubsection{The proofs of Theorems~\ref{thm-P1-nara}--\ref{thm-P1-P2-joint}}\label{proofs-three-theorems}
	
	To establish the equidistribution of the statistics, we use induction on the size of the objects.

	\begin{lem}\label{lem-P1-3+1}
		The statistics $(\mcr, \fcr, \bl)$ over $\M_n(P_1)$ and $(\magg(P), \minn, \ssd)$ over $\PP_n\bf{(3+1)}$ have the same distribution.
	\end{lem}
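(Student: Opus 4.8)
The plan is to prove the lemma directly via the bijection $\Omega$, which by Theorem~\ref{thm-P1-Poset3+1} restricts to a bijection from $\M_n(P_1)$ onto $\PP_n\mathbf{(3+1)}$, by checking that this restriction carries the triple $(\mcr,\fcr,\bl)$ to $(\magg,\minn,\ssd)$. The whole argument rests on one observation. By Lemma~\ref{lem-Nonnesting} a matching $M\in\M_n(P_1)$ is nonnesting, so no two of its arcs nest; hence for arcs $[a_i,b_i]$ and $[a_j,b_j]$ with $a_i<a_j$, these arcs cross if and only if $a_j<b_i$, which is exactly the condition that $p_i$ and $p_j$ are incomparable in $\Omega(M)$. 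Therefore a set of arcs of $M$ is a crossing precisely when its image is an antichain of $\Omega(M)$, and the maximal crossings of $M$ correspond bijectively to the maximal antichains of $\Omega(M)$.

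From this the three identities follow. First, $\mcr(M)=\magg(\Omega(M))$: a maximal crossing is a maximal clique of the crossing graph, hence a maximal antichain of $\Omega(M)$, and $\magg$ counts exactly the maximal antichains. Next, $\bl(M)=\ssd(\Omega(M))$: a merge $M'\oplus M''$ of matchings places every arc of $M'$ entirely to the left of every arc of $M''$, which under $\Omega$ says every element of $\Omega(M')$ lies below every element of $\Omega(M'')$, i.e.\ $\Omega(M)=\Omega(M')\oplus\Omega(M'')$ as an ordinal sum; thus the decomposition of $M$ into irreducible blocks matches the ordinal-sum decomposition of $\Omega(M)$ into indecomposable summands. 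Finally, $\fcr(M)=\minn(\Omega(M))$: let $s$ be the length of the initial run of openers of $M$, so that positions $1,\dots,s$ are openers and position $s+1$ is the first closer. On the one hand, an element $p_i$ of $\Omega(M)$ is minimal exactly when no arc lies entirely to the left of $a_i$, and by non-nesting this forces $a_i=i\le s$, so the minimal elements are precisely $p_1,\dots,p_s$. On the other hand, the arcs opened at positions $1,\dots,s$ pairwise cross and form the unique maximal crossing containing the leftmost arc, so $\fcr(M)=s$; hence $\fcr(M)=\minn(\Omega(M))$.

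Combining the three identities with Theorem~\ref{thm-P1-Poset3+1} shows that $\Omega$ restricts to a bijection $\M_n(P_1)\to\PP_n\mathbf{(3+1)}$ under which $(\mcr,\fcr,\bl)$ and $(\magg,\minn,\ssd)$ correspond, giving the claimed equidistribution. I expect the $\fcr$--$\minn$ identity to be the point requiring the most care: one must verify simultaneously that the initial run of openers exhausts the minimal elements of $\Omega(M)$ and that it constitutes a \emph{maximal} crossing, not merely a crossing, and both facts genuinely use the nonnesting hypothesis. As an alternative, fitting the inductive pattern used for the later theorems, one could instead run the parallel recursive decompositions of $\M_n(P_1)$ and $\PP_n\mathbf{(3+1)}$ from Section~\ref{subsec-decomp} and induct on $n$, checking that $\bl$ and $\ssd$ each gain a $1$ when the first block (respectively first ordinal summand) is split off, that $\fcr$ and $\minn$ depend only on that first piece and there satisfy $\fcr(\Theta(M_1))=\fcr(M_1)+1$ and $\minn(P^*)=\minn(P_{\mathrm{I}})+1$, and that $\mcr$ and $\magg$ are additive over the decomposition; on that route the obstacle is tracking $\magg$ through the relabeling used to build $P^*$ from $P_{\mathrm{I}}$.
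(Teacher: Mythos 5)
Your proof is correct, but it takes a genuinely different route from the paper. The paper proves this lemma by induction on $n$: it runs the parallel decompositions $M=\Theta(M_1)\oplus M_2$ and $P=P^*\oplus P_{\text{II}}$ from Section~\ref{subsec-decomp}, splits into the cases $M_1=\emptyset$ and $M_1\neq\emptyset$, and checks that $(\mcr,\fcr,\bl)$ and $(\magg,\minn,\ssd)$ satisfy identical recurrences (this is exactly the ``alternative'' you sketch in your last sentences, and the step you flag there --- tracking $\magg$ and $\minn$ through the relabeling that builds $P^*$ from $P_{\text{I}}$ --- is indeed where the paper's argument does its work, leaning on part (i) of the proof of Theorem~\ref{thm-equi-cr-nr-P1}). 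Your main argument instead shows directly that the restricted bijection $\Omega$ of Theorem~\ref{thm-P1-Poset3+1} transports the triple of statistics: on a nonnesting matching, incomparability in $\Omega(M)$ is exactly crossing, so maximal crossings are maximal antichains (giving $\mcr=\magg$ via the paper's stated equivalence of $\magg$ with the number of maximal antichains), merges correspond to ordinal sums (giving $\bl=\ssd$), and the initial run of $s$ openers simultaneously indexes the minimal elements and constitutes the unique maximal crossing through the first arc (giving $\fcr=\minn$); your care on the last point is warranted and your verification is sound, using nonnesting to see that the first closer is $b_1=s+1$ so no arc opened after position $s$ can join that crossing. What your route buys is a stronger, statistic-preserving bijection rather than mere equidistribution, and it is essentially the content the authors only gesture at in Remark~\ref{coro-coin-P-N}'s neighborhood (the remark following Corollary~\ref{coro-equi-all}); what the paper's inductive route buys is uniformity with the proofs of Lemmas~\ref{lem-P2-N-101-3142} and the generating-function arguments in Section~\ref{proofs-three-theorems}, which consume precisely the recurrences established along the way. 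The only small gap worth closing in a write-up is the (easy, standard) fact that a set of pairwise-crossing arcs automatically satisfies the paper's definition of a $k$-crossing, i.e., all openers precede all closers.
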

	\begin{proof}
		Let $M=\Theta(M_1) \oplus M_2 \in \mathcal{M}_n(P_1)$, where $M_1 \in \M_{k-1}(P_1)$, $M_2\in \M_{n-k}(P_1)$, and $M^*=\Theta(M_1)$. Set
		$
		P=P^{*}\oplus P_{\text{II}}\in\PP_{n}\textbf{(3+1)}$ (resp., $P_{\text{I}}$)
		to be the image of $M$ (resp., $M_1$) under $\Omega$. We consider two cases based on the properties of $M_1$.
		\begin{itemize}
			\item If $M_1=\emptyset$, then $P_{\text{I}}=\emptyset$. 
			Using the decompositions, we have  
			$M=\{[1,2]\}\oplus M_{2}$, and
			$P$ is obtained by adding a new unique minimal element to~$P_{\text{II}}$.
			Thus, it is clear that
			\begin{align}\label{sta-P1-case1}
				\mcr(M)= \mcr(M_2)+1, \hspace{0.3cm} \fcr(M)=1, \hspace{0.3cm}\bl(M)= \bl(M_2)+1,\notag \\ 
				\magg(P)=\magg(P_{\text{II}})+1, \hspace{0.3cm} \minn(P)=1, \hspace{0.3cm} \ssd(P)= \ssd(P_{\text{II}})+1.
			\end{align}
			
			\item If $M_1 \neq \emptyset$, then it is not difficult to see that $\bl(M^*)= 1$.
			By the proof of Theorem~\ref{thm-equi-cr-nr-P1} (see (i)), each maximal crossing in~$M_{1}$ corresponds to a maximal crossing in~$M^{*}$ whose size increases by~$1$ under~$\Theta$; hence
			$\fcr(M^{*}) = \fcr(M_{1}) + 1$.
			Since $\Theta$ preserves the number of maximal crossings, we also have
			$\mcr(M^{*}) = \mcr(M_{1})$.
			Therefore, we have
			\begin{align}\label{sta-P1-case2}
				\mcr(M)= \mcr(M_1)+\mcr(M_2), \hspace{0.3cm}  \fcr(M)=\fcr(M_1)+1, \hspace{0.3cm} \bl(M)=\bl(M_2)+1.
			\end{align}
			Moreover, by the construction of $P^{*}$, we obtain one extra minimal element compared to $P_{\text{I}}$. Hence
			$
			\minn(P^{*}) = \minn(P_{\text{I}}) + 1,
			$
			while
			$
			\magg(P^{*}) = \magg(P_{\text{I}})$ and
			$\ssd(P^{*}) = 1$.
			Thus, we have
			\begin{align*}
				\magg(P)= \magg(P_{\text{I}})+\magg(P_{\text{II}}), \hspace{0.3cm}  \minn(P)=\minn(P_{\text{I}})+1, \hspace{0.3cm} \ssd(P)=\ssd(P_{\text{II}})+1.
			\end{align*}
		\end{itemize}
		The desired properties in both cases follow by induction on $n$, which completes the proof.
	\end{proof}
	
	\begin{lem}\label{lem-P2-N-101-3142}
		The statistics $(\mcr, \fcr, \bl)$ over $\M_n(P_2)$, $(\magg(P), \minn, \smc)$ over $\PP_n\bf{(N)}$, $(\lmax, \zero, \rmin)$ over $\A_n(101)$, and $(\rmin, \idr, \lmax)$ over $\F_n(3142)$ have the same distribution.
	\end{lem}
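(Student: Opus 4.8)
The plan is to follow the same scheme as the proof of Lemma~\ref{lem-P1-3+1}, now using the four mutually compatible decompositions recorded in Section~\ref{subsec-decomp}. Write $M=\V(M')\oplus M''\in\M_n(P_2)$ with $M'\in\M_{k-1}(P_2)$ and $M''\in\M_{n-k}(P_2)$. Under $\Omega$, $\Psi$ and $\Upsilon$ this decomposition is carried, respectively, to $P=\widetilde P\oplus P''\in\PP_n(\mathbf N)$, where $\widetilde P$ is $P'=\Omega(M')$ with an isolated element adjoined; to $\alpha=\widetilde\alpha\cdot(\alpha''+m)\in\A_n(101)$, where $\widetilde\alpha=\alpha'\,0$, $\alpha'=\Psi(M')$ and $m=\max(\widetilde\alpha)+1$; and to $\pi=\widetilde\pi\oplus\pi''\in\F_n(3142)$, where $\widetilde\pi=k\pi'$, $\pi'=\Upsilon(M')$ and $k=|\widetilde\pi|$. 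Since $\Omega$, $\Psi$ and $\Upsilon$ intertwine these decompositions, with $M'\leftrightarrow P'\leftrightarrow\alpha'\leftrightarrow\pi'$ and $M''\leftrightarrow P''\leftrightarrow\alpha''\leftrightarrow\pi''$, it is enough to show that the three triples of statistics obey matching recursions along the decompositions, and then to argue by induction on $n$.

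Concretely, I will prove that the first coordinate of each triple ($\mcr$, $\magg$, $\lmax$, $\rmin$) equals $1+(\cdot)(M'')$ when $M'=\emptyset$ and $(\cdot)(M')+(\cdot)(M'')$ otherwise; that the second coordinate ($\fcr$, $\minn$, $\zero$, $\idr$) equals $1+(\cdot)(M')$, with the value on an empty object taken to be $0$; and that the third coordinate ($\bl$, $\smc$, $\rmin$, $\lmax$) equals $1+(\cdot)(M'')$. On the matching side this is read off from the gluing procedure in the proof of Lemma~\ref{lem-P2-catalan}: $\V(M')$ is the first irreducible block of $M$ (see the discussion around Corollary~\ref{coro-P2-irre}), so $\bl(M)=1+\bl(M'')$, $\fcr(M)=\fcr(\V(M'))$ and $\mcr(M)=\mcr(\V(M'))+\mcr(M'')$; and inserting the reduction arc into $M'$ enlarges its first maximal crossing by exactly one arc while leaving the number of maximal crossings unchanged --- the $\V$-analogue of the behaviour of $\Theta$ used in Lemma~\ref{lem-P1-3+1} and established in part~(i) of the proof of Theorem~\ref{thm-equi-cr-nr-P1} --- so that $\fcr(\V(M'))=\fcr(M')+1$ and $\mcr(\V(M'))=\mcr(M')$. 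On the poset side I will use that in an ordinal sum $R\oplus Q$ every maximal antichain lies wholly in $R$ or wholly in $Q$ and every maximal chain is a maximal chain of $R$ followed by a maximal chain of $Q$, giving $\magg(R\oplus Q)=\magg(R)+\magg(Q)$ and $\smc(R\oplus Q)=\smc(R)+\smc(Q)$, whereas $\minn$ is controlled by the bottom summand; adjoining an isolated element to $P'$ then preserves $\magg$, raises $\minn$ by $1$, and forces $\smc=1$. For ascent sequences the crucial points are that every entry of $\alpha''+m$ strictly exceeds $\max(\widetilde\alpha)$ and that the trailing $0$ of $\widetilde\alpha$ is a right-to-left minimum of $\alpha$ but never a left-to-right maximum. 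For permutations the crucial points are that $k=|\widetilde\pi|$ is the unique left-to-right maximum of $\widetilde\pi$, that the initial descending run of $\pi$ stays inside $\widetilde\pi$ and begins with $k$, and that right-to-left minima split over the two summands of the direct sum.

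With these recursions in hand, the lemma follows by induction on $n$, the base case $n=0$ being trivial: for each of the four structures the size-$n$ generating polynomial in the three statistics is built from those of smaller sizes by the same rule, namely an ``$M'=\emptyset$'' term plus a sum over the split point $k$ of a product of two smaller polynomials, so the four polynomials coincide for every $n$. I expect the main obstacle to be the poset magnitude: unlike the other statistics $\magg$ is not obviously additive under the ordinal-sum decomposition, so the identity $\magg(R\oplus Q)=\magg(R)+\magg(Q)$ and the invariance of $\magg$ under adjoining an isolated element must be proved carefully via the correspondence among strict down-sets, levels, and maximal antichains. A secondary technical point is the behaviour of $\fcr$ and $\mcr$ under $\V$: though parallel to the $\Theta$ case, it has to be checked directly from the two-case description of the gluing procedure, since the reduction arc typically belongs to several maximal crossings at once.
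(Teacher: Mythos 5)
Your proposal follows essentially the same route as the paper's proof: the same decomposition $M=\V(M')\oplus M''$ carried through $\Omega$, $\Psi$, $\Upsilon$, the same two-case recurrences for each triple of statistics (first coordinate additive, second controlled by $M'$ plus one, third by $M''$ plus one), and the same induction on $n$. The extra verifications you flag (additivity of $\magg$ and $\smc$ under ordinal sum, behaviour of $\fcr$ and $\mcr$ under $\V$) are exactly the points the paper asserts with brief justification, so your plan is correct and complete.
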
 
	\begin{proof} 
		Let $M= \V(M') \oplus M'' \in \M_n(P_2)$, where $M' \in \M_{k-1}(P_2)$ and $M'' \in \M_{n-k}(P_2)$, and $\widetilde{M}=\V(M')$. 
		We also define
		\[
		P=\widetilde{P}\oplus P''\in\PP_{n}\textbf{(N)},\quad
		\alpha=\widetilde{\alpha}\oplus\alpha'' \in \A_n(101),\quad
		\pi=\widetilde{\pi}\oplus\pi'' \in \F_n(3142),
		\]
		to be the respective images of $M$ under the bijections $\Omega$, $\Psi$, and $\Upsilon$. Similarly, $P'$, $\alpha'$, and $\pi'$ denote the corresponding images of $M'$ under these same bijections. We consider the following two cases.
		\begin{itemize}
			\item If $M' =\emptyset$, then $P'=\alpha'=\pi'=\emptyset$. 
			By the decompositions, we have  
			$M=\{[1,2]\}\oplus M''$,  
			$P$ is obtained by adding a new unique minimal element to~$P''$, 
			$\alpha$ by prepending~$0$ to~$\alpha''$, and  
			$\pi=1\oplus \pi''$. 
			Thus, this case follows the same recurrences as~\eqref{sta-P1-case1} for the statistics over $\M_n(P_{2})$, $\PP_{n}\bf{(N)}$, $\A_n(101)$, and $\F_n(3142)$.
			
			\item If $M' \neq \emptyset$, then it is not difficult to see that $\bl(\widetilde{M})= 1$.
			By the construction of~$\V$ in Corollary~\ref{coro-P2-irre}, the newly added reduction arc in~$M'$ yields $\fcr(\widetilde{M})=\fcr(M')+1$
			and leaves the number of maximal crossings unchanged; hence $\mcr(\widetilde{M})=\mcr(M')$.
			Therefore, we have
			\begin{align*}
				\mcr(M)= \mcr(M')+\mcr(M''), \hspace{0.3cm}  \fcr(M)=\fcr(M')+1, \hspace{0.3cm} \bl(M)=\bl(M'')+1.
			\end{align*}
			By the constructions of $\widetilde P$, $\widetilde \alpha$, and $\widetilde \pi$, it is clear that
			$\smc(\widetilde{P})=\rmin(\widetilde{\alpha})=\lmax(\widetilde{\pi})=1$,
			and the $\minn$, $\zero$, and $\idr$ statistics each increase by one in
			$\widetilde{P}$, $\widetilde{\alpha}$ and $\widetilde{\pi}$
			compared to $P'$, $\alpha'$ and $\pi'$, respectively.
			We also see that
			$\magg(\widetilde P)= \magg(P')$,
			$\lmax(\widetilde{\alpha})=\lmax(\alpha')$, and  $\rmin(\widetilde{\pi})=\rmin(\pi')$. 
			Furthermore, by the decompositions of $P$, $\alpha$, and $\pi$, we have
			\begin{align*}
				&\magg(P)=\magg(P')+\magg(P''),\hspace{0.3cm} \minn(P)=\minn(P')+1,\hspace{0.3cm}\smc(P)=\smc(P'')+1,\\ &\lmax(\alpha)=\lmax(\alpha')+\lmax(\alpha''), \hspace{0.3cm}\zero(\alpha)=\zero(\alpha')+1, \hspace{0.3cm} \rmin(\alpha)=\rmin(\alpha'')+1, \\ 
				&\rmin(\pi)=\rmin(\pi')+\rmin(\pi''), \hspace{0.3cm} \idr(\pi)=\idr(\pi')+1, \hspace{0.3cm} \lmax(\pi)=\lmax(\pi'')+1.
			\end{align*}
		\end{itemize}
		The proof now follows by induction on $n$.
	\end{proof}
	
	\begin{coro}\label{coro-equi-all}
		The statistics $(\mcr, \fcr, \bl)$ over $\M_n(P_1)$ and $\M_n(P_2)$, $(\magg(P), \minn, \ssd)$ over $\PP_n\bf{(3+1)}$, $(\magg(P), \minn, \smc)$ over $\PP_n\bf{(N)}$, $(\lmax, \zero, \rmin)$ over $\A_n(101)$ and $(\rmin, \idr, \lmax)$ over $\F_n(3142)$ all have the same distribution.
	\end{coro}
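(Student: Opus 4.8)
The plan is to deduce Corollary~\ref{coro-equi-all} by transitivity, combining Lemmas~\ref{lem-P1-3+1} and~\ref{lem-P2-N-101-3142}. The single link not yet supplied by those two lemmas is the equidistribution of the triple $(\mcr,\fcr,\bl)$ over $\M_n(P_1)$ and over $\M_n(P_2)$; once this is established, Lemma~\ref{lem-P1-3+1} attaches $(\magg,\minn,\ssd)$ over $\PP_n\mathbf{(3+1)}$ to the $\M_n(P_1)$ side, while Lemma~\ref{lem-P2-N-101-3142} attaches $(\magg,\minn,\smc)$ over $\PP_n\mathbf{(N)}$, $(\lmax,\zero,\rmin)$ over $\A_n(101)$, and $(\rmin,\idr,\lmax)$ over $\F_n(3142)$ to the $\M_n(P_2)$ side, and ``having the same distribution'' is a transitive relation.

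First I would prove, by induction on $n$, that $(\mcr,\fcr,\bl)$ has the same distribution over $\M_n(P_1)$ and over $\M_n(P_2)$. The base case $n=0$ is trivial. For the inductive step, recall that every $M\in\M_n(P_1)$ is uniquely of the form $\Theta(M_1)\oplus M_2$ with $M_1\in\M_{k-1}(P_1)$ and $M_2\in\M_{n-k}(P_1)$, and every $M\in\M_n(P_2)$ is uniquely of the form $\V(M')\oplus M''$ with $M'\in\M_{k-1}(P_2)$ and $M''\in\M_{n-k}(P_2)$. The effect of these decompositions on the three statistics is recorded for the first family in \eqref{sta-P1-case1}--\eqref{sta-P1-case2}, and the proof of Lemma~\ref{lem-P2-N-101-3142} shows that the second family obeys \emph{identical} recurrences: the case $M'=\emptyset$ reproduces \eqref{sta-P1-case1}, and the case $M'\neq\emptyset$ gives $\mcr(M)=\mcr(M')+\mcr(M'')$, $\fcr(M)=\fcr(M')+1$, and $\bl(M)=\bl(M'')+1$, matching \eqref{sta-P1-case2}. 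Since the two families are generated by the same recursive scheme with the same initial data, the polynomials $\sum_{M\in\M_n(P_1)} x^{\mcr(M)}y^{\fcr(M)}z^{\bl(M)}$ and $\sum_{M\in\M_n(P_2)} x^{\mcr(M)}y^{\fcr(M)}z^{\bl(M)}$ coincide for every $n$.

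Then I would conclude by quoting Lemmas~\ref{lem-P1-3+1} and~\ref{lem-P2-N-101-3142} and composing the equidistributions. I do not expect a genuine obstacle: the substantive work is already contained in the two lemmas, and the corollary is essentially bookkeeping. The only point requiring care is the term-by-term comparison of the matching-side recurrences in the two lemmas, to make sure the inductive identification of the distributions over $\M_n(P_1)$ and $\M_n(P_2)$ is legitimate; this is a direct inspection of \eqref{sta-P1-case1}--\eqref{sta-P1-case2} against the displays in the proof of Lemma~\ref{lem-P2-N-101-3142}.
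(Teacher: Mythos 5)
Your proposal is correct and follows essentially the same route as the paper: the paper's proof likewise observes that the triples of statistics over all the sets satisfy identical recurrences (as recorded in the proofs of Lemmas~\ref{lem-P1-3+1} and~\ref{lem-P2-N-101-3142}) and concludes by induction on the size of the objects. Your explicit identification of the $\M_n(P_1)$--$\M_n(P_2)$ link as the only bridge not literally stated in the two lemmas, verified by matching \eqref{sta-P1-case1}--\eqref{sta-P1-case2} against the displays in the second lemma's proof, is exactly the bookkeeping the paper leaves implicit.
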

	\begin{proof}
		From the proofs of Lemmas~\ref{lem-P1-3+1} and~\ref{lem-P2-N-101-3142}, 
		the triples of statistics over each set satisfy the same recurrence relations, 
		which completes the proof by induction on the size of the objects.
	\end{proof}
	
	\begin{remark}
		Note that the bijection $\Omega$ provides a proof of the equidistribution of the statistics 
		$(\nr, \mcr, \fcr, \bl)$ over $\M_n$ and $(\h, \magg(P), \minn, \ssd)$ over $\PP_n$.
		Moreover, using the equivalent definition of~$\magg(P)$, the number of distinct strict down-sets, given by Fishburn~\cite{Fishburn-1983,Fishburn-1985}, we note that~$\mcr$, the number of maximal crossings of~$M\in\M_n$, equals the number of distinct sets, each of which is associated with an arc $[a,b]$ and consists of all arcs whose closers precede the opener~$a$.
	\end{remark}
	
	The following two corollaries follow from Lemma~\ref{lem-P2-N-101-3142}.
	
	\begin{coro}\label{coro-coin-MP2}
		For any $M \in \M_n(P_2)$, we have $\nr(M)=\mcr(M)$.
	\end{coro}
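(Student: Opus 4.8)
The plan is to leverage Lemma~\ref{lem-P2-N-101-3142}, which already establishes that $(\mcr,\fcr,\bl)$ over $\M_n(P_2)$ has the same distribution as $(\rmin,\idr,\lmax)$ over $\F_n(3142)$ and, via the bijection $\Upsilon$, that these statistics correspond entry-by-entry. In particular, restricting attention to the first coordinate, $\mcr$ over $\M_n(P_2)$ is equidistributed with $\rmin$ over $\F_n(3142)$, and by Theorem~\ref{thm-P1-nara} both equal $N_n(x)$; likewise $\nr$ over $\M_n(P_2)$ equals $N_n(x)$. So equidistribution of $\mcr$ and $\nr$ on $\M_n(P_2)$ is immediate — but the corollary asserts the stronger pointwise identity $\nr(M)=\mcr(M)$ for each individual $M$, which requires a direct structural argument rather than a mere equidistribution statement.

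The key step is to analyze how $\nr$ and $\mcr$ behave under the recursive decomposition $M=\V(M')\oplus M''$ used throughout Section~\ref{subsec-decomp}. First I would record the base case: for the empty matching both statistics are $0$, and for $M=\{[1,2]\}\oplus M''$ we have $\mcr(M)=\mcr(M'')+1$ from the proof of Lemma~\ref{lem-P1-3+1}/\ref{lem-P2-N-101-3142}, and I would check the analogous relation $\nr(M)=\nr(M'')+1$ directly: a largest noncrossing family of arcs in $\{[1,2]\}\oplus M''$ either uses the isolated arc $[1,2]$ (adding $1$ to a largest noncrossing family of $M''$) or does not (giving $\nr(M'')$), so $\nr(M)=\nr(M'')+1$, matching $\mcr$. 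Second, for $M=\V(M')\oplus M''$ with $M'\neq\emptyset$, I would show $\nr(\widetilde M)=\nr(M')$ and $\mcr(\widetilde M)=\mcr(M')$ — the latter is already in Lemma~\ref{lem-P2-N-101-3142}; for the former, the reduction arc added by $\V$ is inserted so as to extend a maximal crossing (so it lies inside the first irreducible block and crosses the arcs immediately around it), hence it cannot extend a noncrossing family, whence $\nr(\widetilde M)=\nr(M')$. Third, since $\widetilde M$ is irreducible and all its arcs lie strictly to the left of those of $M''$, both $\nr$ and $\mcr$ of $\widetilde M\oplus M''$ split additively: $\nr(M)=\nr(\widetilde M)+\nr(M'')$ and $\mcr(M)=\mcr(\widetilde M)+\mcr(M'')=\mcr(M')+\mcr(M'')$, with the first equality requiring that a largest noncrossing family never gains by mixing arcs from the two blocks (it cannot, as arcs in different blocks are automatically noncrossing, so concatenating optimal families of each block is optimal). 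Combining these recurrences, $\nr(M)$ and $\mcr(M)$ satisfy identical recursions with identical base cases, so $\nr(M)=\mcr(M)$ by induction on $|M|$.

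The main obstacle I anticipate is the careful verification that in a $P_2$-avoiding Stoimenow matching a \emph{maximal crossing is exactly a maximal noncrossing in disguise} at the level of sizes — concretely, that one never finds a noncrossing family strictly larger than every crossing family. This is really where the hypothesis $M\in\M_n(P_2)$ (equivalently, the bijection $\Omega$ to $\PP_n(\mathbf N)$) does the work: in a general Stoimenow matching $\nr$ and $\mcr$ need not coincide, and the $P_2$-avoidance is what forces the recursive structure above to close up cleanly. An alternative, perhaps cleaner, route is to transport the statement through $\Omega$ to $\PP_n(\mathbf N)$: there $\mcr$ corresponds to $\magg$ (the number of distinct strict down-sets) and $\nr$ corresponds to the height $\h$, so the claim becomes $\magg(P)=\h(P)$ for every $\mathbf{(2+2,N)}$-free poset — a fact that may be easier to see directly, since in an $\mathbf N$-free (2+2)-free poset the chain of strict down-sets $\emptyset=D_0\subset D_1\subset\cdots\subset D_t$ is matched in length by a maximal chain picking one element from each level. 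I would present the matching-side induction as the main proof and remark on the poset reformulation.
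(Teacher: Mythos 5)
Your proposal is correct and follows essentially the same route as the paper: the paper's proof simply observes that $\nr$ satisfies the same recurrence and initial conditions as $\mcr$ in both cases of the decomposition $M=\V(M')\oplus M''$ from the proof of Lemma~\ref{lem-P2-N-101-3142}, and concludes by induction, which is exactly your main argument (your poset reformulation $\magg=\h$ on $\PP_n(\mathbf{N})$ is likewise the paper's Corollary~\ref{coro-coin-P-N}). The only point worth tightening is the justification that the reduction arc cannot belong to a $2$-noncrossing: the clean reason is that its opener precedes every closer and its closer follows every opener of $\widetilde M$, so no other arc lies entirely to its left or right.
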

	\begin{proof}
		It is easy to verify that, in both cases of the proof of Lemma~\ref{lem-P2-N-101-3142}, 
		the statistic $\nr$ satisfies the same recurrence and initial conditions as $\mcr$.
	\end{proof}
	
	\begin{coro}\label{coro-coin-P-N}
		For any $P \in \PP_n\bf{(N)}$, we have
		$
		\h(P)=\magg(P)$ and $\ssd(P)=\smc(P)$.
	\end{coro}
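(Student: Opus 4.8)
The plan is to imitate the proof of Corollary~\ref{coro-coin-MP2}: rather than reasoning about individual posets, I will show that the statistics $\h,\magg,\ssd,\smc$ obey matched one-step recurrences under the decomposition $P=\widetilde P\oplus P''$ of $\PP_n\mathbf{(N)}$ set up in Section~\ref{subsec-decomp} (recall that $\widetilde P$ is the first irreducible ordinal summand of $P$, obtained from $P'\in\PP_{k-1}\mathbf{(N)}$ by adjoining an isolated element, and $P''\in\PP_{n-k}\mathbf{(N)}$), and then conclude the two pointwise identities by induction on $n$. Here $\bl(P)$ is read as the number of irreducible summands in the ordinal sum decomposition of $P$, that is, $\bl(P)=\ssd(P)$.

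The only inputs not already contained in the proof of Lemma~\ref{lem-P2-N-101-3142} are the behaviour of $\h$ and $\ssd$ under the two operations appearing in the decomposition. For nonempty posets $R$ and $Q$, every maximal chain of $R\oplus Q$ splits uniquely into a maximal chain of $R$ followed by a maximal chain of $Q$, so $\h(R\oplus Q)=\h(R)+\h(Q)$; and the ordinal sum decomposition of $R\oplus Q$ is the concatenation of those of $R$ and $Q$, so $\ssd(R\oplus Q)=\ssd(R)+\ssd(Q)$. Adjoining an isolated element to a nonempty poset leaves its height unchanged and makes the resulting poset $\widetilde P$ irreducible, so $\ssd(\widetilde P)=1$; recall from the proof of Lemma~\ref{lem-P2-N-101-3142} that moreover $\magg(\widetilde P)=\magg(P')$ and $\smc(\widetilde P)=1$, while $\magg$ and $\smc$ add over $\oplus$. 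Combining all of this with the recurrences of Lemma~\ref{lem-P2-N-101-3142} yields: when $P'=\emptyset$, $\widetilde P$ is a single point and each of $\h(P),\magg(P),\ssd(P),\smc(P)$ equals its value on $P''$ increased by $1$; when $P'\neq\emptyset$, one has $\h(P)=\h(P')+\h(P'')$ and $\magg(P)=\magg(P')+\magg(P'')$, whereas $\ssd(P)=\ssd(P'')+1$ and $\smc(P)=\smc(P'')+1$. Thus $\h$ and $\magg$ satisfy one and the same recurrence, and $\ssd$ and $\smc$ satisfy one and the same recurrence. Since $P\mapsto(P',P'')$ is a size-reducing bijection onto $\bigsqcup_{k=1}^n\PP_{k-1}\mathbf{(N)}\times\PP_{n-k}\mathbf{(N)}$, and on the one-element poset all four statistics equal $1$ (with the natural convention that they vanish on the empty poset, though the $P'=\emptyset$ branch of the recurrence is in any case self-contained), induction on $n$ gives $\h(P)=\magg(P)$ and $\bl(P)=\smc(P)$ for every $P\in\PP_n\mathbf{(N)}$.

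I do not anticipate a genuine obstacle. The new ingredients are the elementary facts that $\h$ is additive over ordinal sums and insensitive to isolated elements, and that $\ssd$ merely counts irreducible summands. The only point requiring care is keeping the two cases $P'=\emptyset$ and $P'\neq\emptyset$ straight, so that the ``$+1$'' --- contributed by the adjoined isolated point (always, in the case of $\ssd$ and $\smc$) or by an extra ordinal summand (only when $P'=\emptyset$, in the case of $\h$ and $\magg$) --- is counted exactly once; this is precisely the bookkeeping already performed for the triple $(\magg,\minn,\smc)$ in the proof of Lemma~\ref{lem-P2-N-101-3142}, so the argument is essentially a matter of adding two further statistics to that same verification.
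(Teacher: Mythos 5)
Your proposal is correct and follows the paper's own (very terse) argument: the paper likewise proves the corollary by checking that $\h$ and $\bl$ (read as $\ssd$, the number of irreducible ordinal summands) satisfy the same one-step recurrences and initial conditions as $\magg$ and $\smc$ under the decomposition $P=\widetilde P\oplus P''$ from Lemma~\ref{lem-P2-N-101-3142}, then inducting on $n$. You have simply supplied the bookkeeping (additivity of $\h$, $\magg$, $\ssd$ over ordinal sums, insensitivity of $\h$ and $\magg$ to an adjoined isolated point, irreducibility of $\widetilde P$) that the paper leaves implicit.
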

	\begin{proof}
		It is not difficult to verify that the statistics $\h$ and $\ssd$ over $\PP_n\bf{(N)}$ satisfy the same recurrence and initial conditions as $\magg(P)$ and $\smc$ stated in Lemma~\ref{lem-P2-N-101-3142}. Hence, the proof follows by induction on~$n$.
	\end{proof}
	
	Let $T(x,y,z,t):=\displaystyle\sum_{M \in \M(P_1)} x^{{\mcr(M)}}y^{{\fcr(M)}} z^{{\bl(M)}}t^{|M|}$. 
	We next give a recurrence relation for these polynomials.
	
	\begin{lem}\label{lem-sta-P1}
		We have
		\begin{equation}\label{sta-P1}
			T(x,y,z,t)= 1+yzt\, T(x,1,z,t) \left(x-1+T(x,y,1,t)\right).
		\end{equation}
	\end{lem}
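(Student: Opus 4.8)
The plan is to prove the recurrence~\eqref{sta-P1} by applying the decomposition of $M\in\M_n(P_1)$ from Section~\ref{subsec-decomp}, namely $M=\Theta(M_1)\oplus M_2$ with $M_1\in\M_{k-1}(P_1)$ and $M_2\in\M_{n-k}(P_1)$ uniquely determined, and tracking how the three statistics $\mcr$, $\fcr$, $\bl$ behave under this decomposition. The constant term $1$ accounts for the empty matching ($|M|=0$). For a nonempty $M$, we distinguish the two cases already isolated in the proof of Lemma~\ref{lem-P1-3+1}, according to whether $M_1$ is empty, and sum the corresponding monomial contributions.

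First I would handle the generic case $M_1\neq\emptyset$. By~\eqref{sta-P1-case2}, $\mcr(M)=\mcr(M_1)+\mcr(M_2)$, $\fcr(M)=\fcr(M_1)+1$, $\bl(M)=\bl(M_2)+1$, and of course $|M|=|M_1|+|M_2|+1$ since $M^*=\Theta(M_1)$ has one more arc than $M_1$. Note that $\fcr$ is read off entirely from the first irreducible block, so the $y$-exponent does not see $M_2$ at all; conversely $\bl$ and $\fcr$-irrelevant data: the contribution to $T$ from this case is
\[
\sum_{\substack{M_1\neq\emptyset\\ M_2}} x^{\mcr(M_1)+\mcr(M_2)}\,y^{\fcr(M_1)+1}\,z^{\bl(M_2)+1}\,t^{|M_1|+|M_2|+1}
= yzt\Bigl(\sum_{M_1\neq\emptyset} x^{\mcr(M_1)}y^{\fcr(M_1)}t^{|M_1|}\Bigr)\Bigl(\sum_{M_2}x^{\mcr(M_2)}z^{\bl(M_2)}t^{|M_2|}\Bigr).
\]
The second factor is exactly $T(x,1,z,t)$. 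The first factor is $\sum_{M_1\neq\emptyset}x^{\mcr(M_1)}y^{\fcr(M_1)}t^{|M_1|}=T(x,y,1,t)-1$, obtained by setting $z=1$ in $T$ and subtracting the empty-matching term. So this case contributes $yzt\,T(x,1,z,t)\bigl(T(x,y,1,t)-1\bigr)$.

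Next the degenerate case $M_1=\emptyset$: then $M=\{[1,2]\}\oplus M_2$, and by~\eqref{sta-P1-case1} we have $\mcr(M)=\mcr(M_2)+1$, $\fcr(M)=1$, $\bl(M)=\bl(M_2)+1$, $|M|=|M_2|+1$. Summing over all $M_2$ gives $xyzt\sum_{M_2}x^{\mcr(M_2)}z^{\bl(M_2)}t^{|M_2|}=xyzt\,T(x,1,z,t)$. Adding the two cases and the empty-matching term,
\[
T(x,y,z,t)=1+xyzt\,T(x,1,z,t)+yzt\,T(x,1,z,t)\bigl(T(x,y,1,t)-1\bigr)
=1+yzt\,T(x,1,z,t)\bigl(x-1+T(x,y,1,t)\bigr),
\]
which is~\eqref{sta-P1}. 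The only slightly delicate point — the step I expect to need the most care — is making sure the factorization of the sum is legitimate, i.e.\ that the pair $(M_1,M_2)$ ranges independently over $\M_{k-1}(P_1)\times\M_{n-k}(P_1)$ with no constraint linking them; this is exactly the content of the bijectivity of $\Theta$ together with the $\oplus$-decomposition established in Section~\ref{subsec-decomp}, and the fact (from Lemma~\ref{lem-Nonnesting} and the discussion there) that $\Theta(M_1)\oplus M_2$ is automatically $P_1$-avoiding and irreducible on its first block, so that no legal pair is excluded and none is double-counted. Everything else is bookkeeping of exponents via~\eqref{sta-P1-case1} and~\eqref{sta-P1-case2}.
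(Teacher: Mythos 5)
Your proposal is correct and follows essentially the same route as the paper: both split according to whether $M_1=\emptyset$ in the decomposition $M=\Theta(M_1)\oplus M_2$, read off the contributions $xyzt\,T(x,1,z,t)$ and $yzt\,T(x,1,z,t)\bigl(T(x,y,1,t)-1\bigr)$ from the recurrences \eqref{sta-P1-case1} and \eqref{sta-P1-case2}, and add the empty-matching term. You merely spell out the factorization of the double sum more explicitly than the paper does (apart from one garbled clause about ``$\fcr$-irrelevant data,'' which does not affect the argument).
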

	\begin{proof}
		The proof is based on the proof of Lemma~\ref{lem-P1-3+1}. 
		For the case $M_1=\emptyset$, together with $|M|=|M_2|+1$, we have a contribution of $xyzt\, T(x,1,z,t)$ to $T(x,y,z,t)$. When $M_1 \neq \emptyset$ (so that $|M|=|M_1|+|M_2|+1$), it gives $yzt \left(T(x,y,1,t)-1 \right)\, T(x,1,z,t)$.
		Summarizing the two cases above, we obtain~\eqref{sta-P1}.
	\end{proof}
	
	\begin{proof}[Proof of Theorem~\ref{thm-P1-nara}]
		By setting $y=z=1$ in~\eqref{sta-P1}, we obtain
		$$
		T(x,1,1,t)=1+t(x-1)T(x,1,1,t)+tT^2(x,1,1,t).
		$$
		By solving this equation, we obtain
		\begin{align}\label{fomular-T(x,1,1,t)}
			T(x,1,1,t)&=\frac{1+t-xt-\sqrt{(1+t-xt)^2-4t}}{2t} \notag \\
			&= \frac{1+2t-t(1+x)-\sqrt{\bigl(1-t(1+x)\bigr)^{2}-4xt^{2}}}{2t}
			= N(x,t),
		\end{align}
		where the last equality follows from~\eqref{Narayana-formula}. We use formula \eqref{fomular-T(x,1,1,t)} and Corollaries~\ref{coro-equi-all}, \ref{coro-coin-MP2}, and \ref{coro-coin-P-N} to complete the proof.
	\end{proof}
	
	\begin{proof}[Proof of Theorem~\ref{thm-ballot}.] 
		Setting $x=1$ in~\eqref{sta-P1}, we obtain
		\begin{align}\label{fomular-T(1,y,z,t)}
			T(1,y,z,t) &=1+yztT(1,1,z,t)\,T(1,y,1,t).
		\end{align}
		Furthermore, by setting $y=1$ in~\eqref{fomular-T(1,y,z,t)}, we obtain
		\begin{align*}
			T(1,1,z,t)=1+ztT(1,1,z,t)\,T(1,1,1,t) = 1+ ztT(1,1,z,t)\,C(t).
		\end{align*}
		By solving the equation, we get $T(1,1,z,t)=\frac{1}{1-ztC(t)}$, 
		which coincides with $C(z,t)$. Similarly, by setting $z=1$ in~\eqref{fomular-T(1,y,z,t)}, we derive $T(1,y,1,t)= C(y,t)$. 
		Furthermore, by substituting $T(1,1,z,t)$ and $T(1,y,1,t)$ into~\eqref{fomular-T(1,y,z,t)}, we obtain the rightmost expression in~\eqref{formula-fcr-block-C(x,t)}. Together with Corollary~\ref{coro-equi-all}, we obtain the desired result.
	\end{proof}
	
	\begin{proof}[Proof of Theorem~\ref{thm-P1-P2-joint}]
		By setting $y=1$ in~\eqref{sta-P1}, we obtain
		\begin{equation}\label{T(x1zt)-form}
			T(x,1,z,t)= 1+zt T(x,1,z,t)(x-1+T(x,1,1,t)).
		\end{equation}
		Furthermore, by substituting~\eqref{fomular-T(x,1,1,t)} into (\ref{T(x1zt)-form}) and solving, we get 
		\begin{align}\label{fomular-T(x,1,z,t)}
			T(x,1,z,t)&=\frac{2}{2 + z\left(t-1 - t x + \sqrt{1 + t^{2}(x - 1)^{2} - 2 t (1 + x)}\right) } \notag \\
			&=\frac{1}{1 - zt\left(x-1+N(x,t)\right)}.
		\end{align}
		Moreover, by setting $z=1$ in~\eqref{sta-P1}, we obtain
		\begin{equation}\label{Txy1t-formula}
			T(x,y,1,t)= 1+yt T(x,1,1,t)(x-1+T(x,y,1,t)).
		\end{equation}
		By substituting~\eqref{fomular-T(x,1,1,t)} into (\ref{Txy1t-formula}) and solving, we have
		\begin{align}\label{fomular-T(x,y,1,t)}
			T(x,y,1,t) &=1 + \frac{x y \left(1 - xt- 2yt+t - \sqrt{1 + t^2 (1 - x)^2 - 2 t (1 + x)} \right)}{2 - 2 y \left( 1- xt-yt+t \right)} \notag \\
			&=\frac{1+yt(x-1) N(x,t)}{1-yt N(x,t)}.
		\end{align}
		Finally, by substituting~\eqref{fomular-T(x,1,z,t)} and \eqref{fomular-T(x,y,1,t)} 
		into~\eqref{sta-P1} and solving, we derive that $T(x,y,z,t)$ coincides with~\eqref{fomular-sta-P1-P2-joint}, 
		and~\eqref{main-formulas-dist} is obtained by substituting~\eqref{Narayana-formula} into~\eqref{fomular-sta-P1-P2-joint}.
		This, together with Corollary~\ref{coro-equi-all}, completes the proof.
	\end{proof}
	
	\section{Concluding remarks}
	
	The following conjecture extends the results in Theorem~\ref{thm-equi-cr-nr-P1}.
	\begin{conj}\label{conj-nr}
		The statistics $\nr$ over $\M_n(P_1)$ and  $\h$ over $\D_n$ have the same distribution, i.e.,
		\begin{align*}
			\sum_{M \in \mathcal{M}_n(P_1)} x^{\nr(M)} 
			= \sum_{\mu \in \mathcal{D}_n} x^{\h(\mu)}.
		\end{align*}
	\end{conj}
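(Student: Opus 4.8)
The plan is to treat the two equalities separately. For the first, $\sum_{M \in \mathcal{M}_n(P_1)} x^{\nr(M)} = \sum_{P \in \mathcal{P}_n{\bf (3+1)}} x^{\h(P)}$, I would simply invoke the bijection $\Omega \colon \mathcal{M}_n(P_1) \to \mathcal{P}_n{\bf (3+1)}$ from Theorem~\ref{thm-P1-Poset3+1}. By the definition of $\Omega$, a set of arcs $[a_{i_1},b_{i_1}],\ldots,[a_{i_k},b_{i_k}]$ forms a $k$-noncrossing in $M$ (that is, $b_{i_1}<a_{i_2}$, $b_{i_2}<a_{i_3}$, $\ldots$) precisely when the images $p_{i_1},\ldots,p_{i_k}$ form a $k$-chain in $\Omega(M)$; hence $\nr(M)=\h(\Omega(M))$ for every $M\in\mathcal{M}_n(P_1)$, and the first equality is immediate. (The same reasoning gives $\crr(M)=\wid(\Omega(M))$ on $\mathcal{M}_n(P_1)$: a $k$-crossing is exactly an antichain of $k$ arcs, since in a nonnesting matching any two incomparable arcs cross.)

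With the first equality established, and using Theorem~\ref{thm-equi-cr-nr-P1}, which already provides $\sum_{P\in\mathcal{P}_n{\bf (3+1)}}x^{\wid(P)}=\sum_{\mu\in\mathcal{D}_n}x^{\h(\mu)}$, the remaining claim $\sum_{P\in\mathcal{P}_n{\bf (3+1)}}x^{\h(P)}=\sum_{\mu\in\mathcal{D}_n}x^{\h(\mu)}$ is equivalent to the assertion that $\h$ and $\wid$ are equidistributed over $\mathcal{P}_n{\bf (3+1)}$ (equivalently, that $\nr$ and $\crr$ are equidistributed over nonnesting matchings). I would attack this through the ordinal-sum decomposition $P=P^{(1)}\oplus\cdots\oplus P^{(r)}$ of a ${\bf (2+2,3+1)}$-free poset into irreducibles, which under $\Omega$ matches the decomposition of $M\in\mathcal{M}_n(P_1)$ into irreducible blocks described in Section~\ref{subsec-decomp}. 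Since an ordinal sum places every element of $P^{(i)}$ below every element of $P^{(i+1)}$, one has $\h(P)=\sum_i\h(P^{(i)})$ while $\wid(P)=\max_i\wid(P^{(i)})$. Consequently $\sum_P x^{\h(P)}t^{|P|}=\bigl(1-\mathrm{IH}(x,t)\bigr)^{-1}$ with $\mathrm{IH}(x,t)=\sum_{P\text{ irreducible}}x^{\h(P)}t^{|P|}$, whereas $\sum_P x^{\wid(P)}t^{|P|}$ is recovered from the truncated series $\mathrm{IW}_{\le k}(t)=\sum_{P\text{ irred},\,\wid(P)\le k}t^{|P|}$ by the standard max-statistic bookkeeping. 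Moreover, every irreducible ${\bf (2+2,3+1)}$-free poset of size $m$ has the form $\Omega(\Theta(C))$ for a unique nonnesting matching $C$ with $m-1$ arcs, and $\wid(\Omega(\Theta(C)))=\crr(C)+1$, so $\mathrm{IW}(x,t)$ is $xt$ times the known height generating function of $\mathcal{D}$; thus the width side is fully under control.

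The whole problem then reduces to determining $\mathrm{IH}(x,t)$ — the distribution of $\h$ over irreducible ${\bf (2+2,3+1)}$-free posets, equivalently of $\nr(\Theta(C))$ as $C$ ranges over nonnesting matchings — and showing that $\bigl(1-\mathrm{IH}(x,t)\bigr)^{-1}$ equals the Dyck-path height generating function (OEIS A080936). This is the hard step: the operation $\Theta$ (adjoining a reduction arc and re-pairing openers with closers, equivalently prepending a $U$ and appending a $D$ to the associated Dyck path) does not respect the noncrossing structure — for example it collapses $\nr$ of a $3$-noncrossing from $3$ down to $2$ — so the first-return / first-block decomposition by itself yields only a circular identity for $\mathrm{IH}$. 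The natural remedy is to find a genuinely different recursive decomposition of an irreducible semiorder that changes the height in a controlled way (for instance by deleting the element whose unit interval is rightmost, or by peeling off the top level of the level structure), set up the resulting functional equation, and solve it; alternatively one could search for an explicit bijection $\mathcal{D}_n\to\mathcal{D}_n$ carrying $\h$ to the statistic $\mu\mapsto\nr(\Gamma(\mu))$, which would prove the conjecture directly. I expect this analysis of the internal structure of irreducible semiorders with respect to $\h$ to be the main obstacle, and it is presumably why the statement is recorded only as a conjecture.
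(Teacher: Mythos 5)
This statement is recorded in the paper only as Conjecture~\ref{conj-nr}; the authors give no proof, so there is nothing to compare your argument against on their side. Judged on its own terms, your proposal correctly disposes of the easy half and then stops at the hard half. The first equality is indeed immediate: under $\Omega$ a $k$-noncrossing in $M$ corresponds exactly to a $k$-chain in $\Omega(M)$, so $\nr(M)=\h(\Omega(M))$, and Theorem~\ref{thm-P1-Poset3+1} restricts $\Omega$ to a bijection $\M_n(P_1)\to\PP_n\mathbf{(3+1)}$ (the paper itself notes the correspondence $(\nr,\ldots)\leftrightarrow(\h,\ldots)$ in the remark after Corollary~\ref{coro-coin-MP2}). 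Your reduction of the second equality, via Theorem~\ref{thm-equi-cr-nr-P1}, to the equidistribution of $\h$ and $\wid$ over $\PP_n\mathbf{(3+1)}$ (equivalently of $\nr$ and $\crr$ over nonnesting matchings) is also sound, as is the ordinal-sum bookkeeping $\sum_P x^{\h(P)}t^{|P|}=\bigl(1-\mathrm{IH}(x,t)\bigr)^{-1}$ and the identification of the width side with the Dyck-path height series through $\Theta$ and Corollary~\ref{lem-P1-decomposition}.

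The genuine gap is the one you name yourself: you never determine $\mathrm{IH}(x,t)$, i.e.\ the distribution of $\h$ over irreducible $\mathbf{(2+2,3+1)}$-free posets, nor do you exhibit any decomposition, functional equation, or bijection that controls $\nr$ (as opposed to $\crr$) under the first-block decomposition. Your own example showing that $\Theta$ can drop $\nr$ from $3$ to $2$ demonstrates precisely why the machinery of Sections~\ref{subsec-decomp}--\ref{stats-St-matchings-subsec} does not transfer: every equidistribution proved in the paper (Lemmas~\ref{lem-P1-3+1} and~\ref{lem-P2-N-101-3142}, Theorem~\ref{thm-equi-cr-nr-P1}) rests on a statistic behaving predictably under $\Theta$ or $\V$ (increasing by one, or being preserved), and $\nr$ does neither. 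Proposing to ``find a genuinely different recursive decomposition'' or ``search for an explicit bijection'' identifies the obstacle but does not overcome it, so the conjecture remains unproved; what you have is a correct restatement of the problem in generating-function form, not a proof.
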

	Via the bijection $\Omega$, Conjecture~\ref{conj-nr} implies that the height statistics defined on $\mathcal{P}_n(\mathbf{3+1})$ and $\D_n$ are equidistributed.
	Dilworth's theorem~\cite{Dilworth} states that, in any finite poset, the width equals the minimum number of chains needed to cover it. Moreover, Mirsky~\cite{Mirsky} proved that the height of a poset equals the minimum number of antichains needed to cover it; see also~\cite{Greene-Kleitman}.
	If Conjecture~\ref{conj-nr} holds, then it, together with Theorem~\ref{thm-equi-cr-nr-P1}, implies that the height over $\mathcal{P}_n(\mathbf{3+1})$, the width over $\mathcal{P}_n(\mathbf{3+1})$, the maximal crossing size over $\M_n(P_1)$, and the maximal noncrossing size over $\M_n(P_1)$ all have the same distribution.
	
	\section*{Acknowledgements} 
	The second author is grateful to the SUSTech International Center for Mathematics for its hospitality during his visit in April 2026.
	The work of the third author was supported by the National Natural Science Foundation of China (No.\ 12171362) and the Tianjin Municipal Natural Science Foundation (No.\ 25JCYBJC00430).

\end{document}